\newcommand{\Hom}{{\rm{Hom}}}
\newcommand{\Ext}{{\rm{Ext}}}
\newcommand{\Coker}{{\rm{coker}}}
\newcommand{\Fac}{{\rm{Fac}}}
\newcommand{\Filt}{{\rm{Filt}}}
\newcommand{\Sub}{{\rm{Sub}}}
\newcommand{\add}{{\rm{add}}}
\newcommand{\mmod}{{\rm{{mod\mbox{-}}}}}
\newcommand{\CA}{\mathcal{A}}
\newcommand{\C}{\mathcal{C}}
\newcommand{\F}{\mathcal{F}}
\newcommand{\CP}{\mathcal{P}}
\newcommand{\Q}{\mathcal{Q}}
\newcommand{\T}{\mathcal{T}}
\newcommand{\CU}{\mathcal{U}}
\newcommand{\X}{\mathcal{X}}
\def\k{\Bbbk}
\newcommand{\M}{\mathscr{M}}
\newcommand{\U}{\mathscr{U}}
\newcommand{\Rep}[1]{%
  {%
    \small%
    \begin{matrix}%
      #1%
    \end{matrix}%
  }%
}
\newcommand{\rep}[1]{%
  {%
    \tiny%
    \begin{matrix}%
      #1%
    \end{matrix}%
  }%
}
\newcommand{\ind}{{\rm{ind\mbox{-}}}}
\newcommand{\Mod}{{\rm{Mod\mbox{-}}}}
\newcommand{\Pc}{{P_{\bullet}}}
\newcommand{\rt}{\rightarrow}
\newcommand{\lrt}{\longrightarrow}
\newcommand{\st}{\stackrel}
\theoremstyle{plain}
\newtheorem{theorem}{Theorem}[section]
\newtheorem{proposition}[theorem]{Proposition}
\newtheorem{lemma}[theorem]{Lemma}
\newtheorem{corollary}[theorem]{Corollary}
\theoremstyle{remark}
\newtheorem{remark}[theorem]{Remark}
\newtheorem{example}[theorem]{Example}
\theoremstyle{definition}
\newtheorem{definition}[theorem]{Definition}
\newcommand\blfootnote[1]{%
  \begingroup
  \renewcommand\thefootnote{}\footnote{#1}%
  \addtocounter{footnote}{-1}%
  \endgroup
}
\begin{document}

\title[On Higher Torsion Classes]{On Higher Torsion Classes}

\author{
  \name[J. Asadollahi]{J. Asadollahi}
  %\authorfootnote{JA was supported by a LMS Grant - No. 51805 to visit the University of Leicester, where part of this work is done and by the Iran National Science Foundation (INSF) under project No. 4001480}
  \address{Department of Pure Mathematics, Faculty of Mathematics and Statistics, University of Isfahan, P.O.Box: 81746-73441, Isfahan, Iran}
  \email{asadollahi@sci.ui.ac.ir, asadollahi@ipm.ir}
}

\author{
  \name[P. J\o rgensen]{P. J\o rgensen}
  %\authorfootnote{PJ was supported by a DNRF Chair from the Danish National Research Foundation (grant number DNRF156), by Aarhus University Research Foundation (grant no. AUFF-F-2020-7-16), a Research Project 2 from the Independent Research Fund Denmark (grant no. 1026-00050B), and by the Engineering and Physical Sciences Research Council (EPSRC grant number EP/P016014/1). }
  \address{Department of Mathematics, Aarhus University, Ny Munkegade 118, 8000 Aarhus C, Denmark}
  \email{peter.jorgensen@math.au.dk}
}

\author{
  \name[S. Schroll]{S. Schroll}
  %\authorfootnote{SS is supported and HT is partially supported by the EPSRC through the Early Career Fellowship, EP/P016294/1. }
  \address{Department of Mathematics, University of Cologne, Weyertal 86-90, 50931 Cologne, Germany}
  \email{schroll@math.uni-koeln.de}
}

\author{
  \name[H. Treffinger]{H. Treffinger}
  %\authorfootnote{HT is supported by the European Union’s Horizon 2020 research and innovation programme under the Marie Sklodowska-Curie grant agreement No 893654. HT is partially supported by the Deutsche Forschungsgemeinschaft under Germany's Excellence Programme Grant - EXC-2047/1-390685813.}
  \address{Universit\'{e} Paris Cit\'e, B\^{a}timent Sophie Germain 5, rue Thomas Mann 75205, Paris Cedex 13, FRANCE}
  \email{treffinger@imj-prg.fr}
}

\subjclass[2020]{18E40, 16S90, 18G99, 18E10, 18G25}

\maketitle

\begin{abstract}
Building on the embedding of an $n$-abelian category $\M$  into an abelian category $\CA$ as an $n$-cluster-tilting subcategory of $\CA$, in this paper we relate the $n$-torsion classes of $\M$ with  the torsion classes of $\CA$.
Indeed, we show that every $n$-torsion class in $\M$ is given by the intersection of a torsion class in $\CA$ with $\M$.
 Moreover, we show that every chain of $n$-torsion classes in the $n$-abelian category $\M$ induces a Harder-Narasimhan filtration for every object of $\M$. We use the relation between $\M$ and $\CA$ to show that every Harder-Narasimhan filtration induced by a chain of $n$-torsion classes in $\M$ can be induced by a chain of torsion classes in $\CA$.
 Furthermore, we show that $n$-torsion classes are preserved by Galois covering functors, thus we provide a way  to systematically  construct new (chains of) $n$-torsion classes.
\end{abstract}

\section{Introduction}
Higher homological algebra has its origin in the study of  $n$-cluster-tilting subcategories of abelian and triangulated categories  in \cite{Iyama2007a, Iyama2007}.
\blfootnote{JA was supported by a LMS Grant - No. 51805 to visit the University of Leicester, where part of this work is done and by the Iran National Science Foundation (INSF) under project No. 4001480.
PJ was supported by a DNRF Chair from the Danish National Research Foundation (grant number DNRF156), by Aarhus University Research Foundation (grant no. AUFF-F-2020-7-16), a Research Project 2 from the Independent Research Fund Denmark (grant no. 1026-00050B), and by the Engineering and Physical Sciences Research Council (EPSRC grant number EP/P016014/1).
SS is supported and HT is partially supported by the EPSRC through the Early Career Fellowship, EP/P016294/1.
HT is supported by the European Union’s Horizon 2020 research and innovation programme under the Marie Sklodowska-Curie grant agreement No 893654. HT is partially supported by the Deutsche Forschungsgemeinschaft under Germany's Excellence Programme Grant - EXC-2047/1-390685813.}
The subject  has greatly developed  since its introduction with more and more of the classical notions emerging in the higher setting.
The key idea of higher homological algebra is  the study of categories where the shortest non-split exact sequences are composed of $n+2$ objects, for a fixed positive integer $n$.
In particular, $1$-homological algebra corresponds to the classical theory of abelian, exact and triangulated categories and their classical generalisations such as quasi-abelian and extriangulated categories.

In recent years the  importance of higher homological algebra is starting to emerge through articles showing connections between this subject and other branches of the mathematical sciences, such as combinatorics and homological mirror symmetry \cite{Dickerhoff2019, Herschend2011, Iyama2011, Oppermann2012, WilliamsarXiv:2007.12664}.

Since its inception, it has been shown that many of the fundamental homological concepts in the classical theory have an analogue in higher homological algebra.
Classical homological algebra is a by now well-developed subject and many of the fundamental concepts have several equivalent definitions characterising different properties and aspects of the various concepts.
However, this poses a difficulty in generalising these ideas into the setting of higher homological algebra, since the classically equivalent definitions might lift to non-equivalent concepts in the higher setting.
Therefore,  even if $n$-exact sequences are easy to identify, the search for the best definition for higher analogues of classical notions is not an easy task.
A breakthrough in this direction was achieved in \cite{Jasso2016}, where the definitions of $n$-abelian and $n$-exact categories were introduced and in \cite{Ebrahimi2020, Kvamme2020} where it is shown that any $n$-abelian category arises as an $n$-cluster tilting subcategory of an abelian category.

A key notion in representation theory and homological algebra is the concept of torsion theories, introduced by Dickson in \cite{Dickson1966}.
Their natural relevance, for example, in relation to derived categories and tilting theory, have led to the use of homological algebra in many branches of mathematics, including algebraic geometry and mathematical physics.
Torsion theory is built on the notion of torsion pairs, where a torsion pair is a pair of full subcategories $(\T, \F)$ with no non-zero morphism from the torsion class $\T$ to the torsionfree class $\F$.
A definition of torsion classes in higher homological algebra has recently been given by the second author in \cite{Jorgensen2016} based on the classical characterisation of the existence of a unique torsion subobject and a unique torsion free quotient for every object in the category.

Part of our motivation for writing this paper is to show that when considering an $n$-abelian category in the context of its ambient abelian category, that is viewed as an $n$-cluster tilting subcategory of an abelian category, then the definition of higher torsion classes in \cite{Jorgensen2016} seems to encode all the relevant properties one would expect.
More precisely, one of the main ideas of the paper is built on the comparison of $n$-torsion classes in an $n$-abelian category and the corresponding minimal torsion classes generated by these $n$-torsion classes in an abelian category which contains the $n$-abelian category as a $n$-cluster-tilting subcategory. In particular, we do this in the context of Harder-Narasimhan filtrations and $n$-Harder-Narasimhan filtrations  which we define   and  which we also study  in the context of Galois coverings.

Our first result characterises the minimal torsion classes in an abelian category containing a $n$-torsion class.
Note that for every $n\geq 1$, the set of all $n$-torsion classes in an $n$-abelian category $\M$ forms a poset under the natural order given by the inclusion.
See Corollary \ref{cor:poset} and Theorem \ref{thm:torsfromntors}.

\begin{theorem}\label{thm:1.1}
Let $\M$ be an $n$-cluster-tilting subcategory of a skeletally small abelian length category $\CA$.
Then there is an injective morphism of posets
\[ T: \{ \text{$n$-torsion classes in $\M$} \} \rightarrow \{ \text{torsion classes in $\CA$} \}  \]
given by sending an $n$-torsion class $\U$ in $\M$  to the minimal torsion class  in $\CA$ containing $\U$.
Moreover, a torsion class $\T$ in $\CA$ is of the form $T(\U)$, for $\U$ an $n$-torsion class in $\M$, if and only if the following hold:
\begin{enumerate}
    \item $tM \in \U$ for all $M \in \M$; where $t=t_{\T}$ is the torsion functor associated to the torsion class $\T$;
    \item $\T$ is the minimal torsion class in $\CA$ containing $\{ tM : M \in \M \}$;
    \item $\mbox{\rm Ext}^{n-1}_\CA (X, Y) = 0$, for all $X \in \{ tM : M \in \M \}$ and $Y \in \{{\rm coker}(tM \hookrightarrow M) \mid M \in \M\}$.
\end{enumerate}
In this case, $\U= \T \cap \M=\{ tM : M \in \M \}$.
\end{theorem}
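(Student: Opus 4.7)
The plan is to define $T(\U) := \Filt(\Fac(\U))$, the smallest torsion class of $\CA$ containing $\U$, which exists because $\CA$ is a skeletally small abelian length category. Monotonicity of $T$ is immediate from that of $\Fac$ and $\Filt$, so $T$ is a poset morphism. The whole theorem will hinge on the central identity $\U = T(\U) \cap \M$: injectivity of $T$ follows from it at once, and the final equality $\U = \T \cap \M = \{tM : M \in \M\}$ in the statement of the theorem is built into the bijection between $n$-torsion classes in $\M$ and torsion classes in $\CA$ satisfying (1)--(3).

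I would then split the ``moreover'' claim into its two implications. For the forward direction, fix an $n$-torsion class $\U$ in $\M$ and set $\T = T(\U)$ with torsion functor $t$. The definition of an $n$-torsion class attaches to each $M \in \M$ a canonical $n$-exact sequence
\[
0 \to u \to M \to f^0 \to \cdots \to f^{n-1} \to 0
\]
in $\M$ with $u \in \U$, first map monic in $\CA$, and suitable $\Hom$-vanishing between $\U$ and the right-hand tail. Since $u \in \U \subseteq \T$, one has $u \subseteq tM$. To upgrade to $u = tM$ and obtain (1), I would show that $\Coker(u \hookrightarrow M)$ computed in $\CA$ is $\T$-torsionfree: any morphism into it from an object of $\Fac(\U)$ lifts along $M \twoheadrightarrow \Coker(u \hookrightarrow M)$ to a nonzero morphism from $\U$ into $f^0$, contradicting the $\Hom$-vanishing axiom of an $n$-torsion class; a dévissage along $\Filt$ then promotes torsionfreeness from $\Fac(\U)$ to all of $\T$. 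Property (2) holds by the very definition of $T(\U)$ as a minimal torsion class, and (3) I would extract by a Yoneda dimension shift along the $n$-exact sequence above, using the $n$-cluster-tilting vanishing $\Ext^i_\CA(\M,\M) = 0$ for $1 \le i \le n-1$.

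For the reverse direction, assume (1)--(3) and set $\U := \{tM : M \in \M\}$; by (1) we have $\U \subseteq \M$, and by (2) we already get $\T = T(\U)$ once $\U$ is shown to be an $n$-torsion class in $\M$. Given any $M \in \M$, I would begin with the short exact sequence $0 \to tM \to M \to \Coker(tM \hookrightarrow M) \to 0$ in $\CA$ and attach an $n$-cluster-tilting coresolution $0 \to \Coker(tM \hookrightarrow M) \to f^0 \to \cdots \to f^{n-1} \to 0$ in $\CA$ with $f^i \in \M$. Splicing yields a long exact sequence in $\CA$ all of whose terms lie in $\M$, which by $n$-cluster-tilting is automatically $n$-exact in $\M$ in the sense of Jasso. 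Hypothesis (3) is the key input making this $n$-exact sequence certify $\U$ as an $n$-torsion class, by producing the $\Hom$-orthogonality that separates the torsion and torsionfree pieces; finally $\U = \T \cap \M$ follows because every object in $\T \cap \M$ is its own torsion subobject.

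The principal obstacle is the tension between the two homological structures in play: $\CA$ sees only ordinary short exact sequences and therefore produces $tM$ as a genuine subobject, whereas $\M$ encodes information in $(n+2)$-term $n$-exact sequences. Condition (3), namely the vanishing of $\Ext^{n-1}_\CA$ between the torsion and torsionfree pieces, is the single nonformal ingredient that forces these two points of view to agree. Its necessity, to be proved in the forward direction, reduces to a careful Yoneda-shuffle along the $n$-exact sequence, while its sufficiency, needed in the reverse direction, is precisely what certifies that the spliced long exact sequence in $\CA$ is not merely acyclic but genuinely realises an $n$-torsion decomposition in $\M$.
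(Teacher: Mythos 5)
Your plan matches the paper's proof in structure: define $T(\U) = \Filt(\Fac(\U))$, prove $tM \cong U^M$ for every $M \in \M$ (which gives both injectivity of $T$ and the identity $\U = \{tM : M \in \M\} = T(\U) \cap \M$), and characterise when $t\M$ is an $n$-torsion class by the vanishing $\Ext^{n-1}_\CA(t\M, f\M) = 0$. Two details are worth tightening. First, in showing $\Coker(u \hookrightarrow M)$ is torsion-free you invoke a ``$\Hom$-vanishing axiom'' to deny a nonzero map from $\U$ into $f^0 = V^1$, but the $n$-torsion-class definition does not force $\M(U, V^1) = 0$; rather, $\Coker(u \hookrightarrow M)$ embeds in $V^1$ as $\ker(v^1)$, so a map $U \to \Coker(u \hookrightarrow M) \hookrightarrow V^1$ is killed by $v^1$ and hence vanishes because $v^1_* : \M(U, V^1) \to \M(U, V^2)$ is injective, which is the left-exactness part of the $n$-torsion axiom. (There is also no ``lifting along $M \twoheadrightarrow \Coker(u \hookrightarrow M)$''; the relevant map goes the other way, into $V^1$.) Second, the ``Yoneda dimension shift'' that extracts and then re-uses condition (3) is precisely the long exact $\Hom$-$\Ext^n$ sequence for $n$-exact sequences from Jasso--Kvamme: it converts surjectivity of $v^{n-1}_* : \CA(X, V^{n-1}) \to \CA(X, V^n)$ into injectivity of $\Ext^n_\CA(X, tM) \to \Ext^n_\CA(X, M)$, and combining this with the $\Ext$-sequence of $0 \to tM \to M \to fM \to 0$ and the $n$-cluster-tilting vanishing $\Ext^{n-1}_\CA(X, M) = 0$ makes it equivalent to $\Ext^{n-1}_\CA(X, fM) = 0$. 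Without naming that sequence the crucial equivalence in both directions of the ``moreover'' is only asserted, not proved.
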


Stability conditions were introduced in \cite{Mumford1965} to attack algebro-geometric problems.
Given their simplicity and effectiveness, their definition was later adapted to other contexts, such as quiver representations \cite{King1994, Schofield1991}, abelian categories \cite{Rudakov1997} and triangulated categories \cite{Bridgeland2007}.

One of the features of stability conditions relies on the fact that every stability condition determines for each non-zero object in a category a stratification by more well-behaved objects.
This stratification, usually known as the \textit{Harder-Narasimhan filtration}, has been used to make possible calculations that otherwise would be highly complicated or even impossible. Applications of this can be found, for example, in the study of Donaldson-Thomas invariants and in the mirror symmetry program \cite{Bridgeland2018, Reineke2003}.

In a recent paper  \cite{T-HN-filt}, the fourth author has introduced an axiomatic approach to Harder-Narasimhan filtrations for abelian categories, by showing that the existence of such filtration for every object in an abelian category is equivalent to the existence of a chain of torsion classes in the category.
Since this construction of Harder-Narasimhan filtrations does not depend on the existence of a stability condition, it allows the introduction of Harder-Narasimhan filtrations to non-abelian settings such as quasi-abelian categories \cite{Tattar2019}.
In the second main result of this paper we push this idea further by showing that chains of $n$-torsion classes  induce Harder-Narasimhan filtrations in $n$-abelian categories.
Moreover, we  show that the Harder-Narasimhan filtrations obtained in this way coincide with the Harder-Narasimhan filtrations in the ambient abelian category which contains the $n$-abelian category as an $n$-cluster tilting subcategory.
See Theorem \ref{thm:nHNfilt} and Theorem \ref{thm:comparison}.

\begin{theorem}
\label{thm:1.2}
Let $\M$ be an $n$-abelian category and let $\delta$ be a chain of $n$-torsion classes in $\M$.
Then for $M$ a non-zero object in $\M$ the following hold:
\begin{enumerate}
\item $\delta$ induces an $n$-Harder-Narasimhan filtration of $M$ which is unique up to isomorphism.
\item If $\M$ is the $n$-cluster-tilting subcategory of a skeletally small abelian length category $\CA$  then the $n$-Harder-Narasimhan filtration of $M$ in $\M$ induced by $\delta$ is equal to the Harder-Narasimhan filtration of $M$ in $\CA$ induced by $T(\delta)$, where $T(\delta)=\{T(\U) \mid \U \in \delta\}$.
\end{enumerate}
\end{theorem}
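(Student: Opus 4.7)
The strategy is to reduce everything to the classical case via Theorem~\ref{thm:1.1} and the main result of \cite{T-HN-filt}, then to transport the resulting filtration back into $\M$. By the embedding theorem of \cite{Ebrahimi2020, Kvamme2020} we may realize $\M$ as an $n$-cluster-tilting subcategory of a skeletally small abelian length category $\CA$, so that Theorem~\ref{thm:1.1} applies uniformly; this reduction is what lets us prove (1) and (2) simultaneously.

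Given a chain $\delta = \{\U_i\}$ of $n$-torsion classes in $\M$, I would first apply $T$ termwise to produce $T(\delta) = \{T(\U_i)\}$, which is a chain of torsion classes in $\CA$ because $T$ is a morphism of posets. Invoking \cite{T-HN-filt}, the chain $T(\delta)$ induces a classical Harder--Narasimhan filtration of $M$ in $\CA$ whose terms are the distinct torsion subobjects $t_{T(\U_i)}M$. Condition~(1) of Theorem~\ref{thm:1.1} gives $t_{T(\U_i)}M \in \U_i \subseteq \M$ for every $i$, so every term of the filtration lies in $\M$ to begin with, which is the key geometric input.

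The next step is to repackage this chain of subobjects of $M$ in $\CA$ as an $n$-HN filtration in $\M$. For each consecutive jump $t_{T(\U')}M \hookrightarrow t_{T(\U)}M$ in the filtration, the cokernel $C$ computed in $\CA$ is a priori not an object of $\M$, and I would build the connecting filler objects of an $(n+2)$-term $n$-exact sequence in $\M$ by applying the $n$-abelian structure: condition~(3) of Theorem~\ref{thm:1.1} forces the relevant $\Ext^{n-1}_{\CA}$ to vanish on the pairs that arise, which is exactly what one needs to extend the two-term sequence $t_{T(\U')}M \hookrightarrow t_{T(\U)}M \twoheadrightarrow C$ into an $n$-exact sequence whose outer terms lie in $\M$ and whose "quotient" is the $n$-HN factor. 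Semistability of this factor with respect to $\delta$ should transfer from the classical semistability of $C$ with respect to $T(\delta)$, using that $T$ is order-preserving and that $\U = T(\U) \cap \M$. Uniqueness up to isomorphism then follows from the uniqueness of the classical HN filtration in $\CA$ together with the full faithfulness of $\M \hookrightarrow \CA$, proving (1). Part (2) is essentially immediate from the construction: the filtration built in $\M$ is, at the level of underlying objects of $\CA$, literally the HN filtration induced by $T(\delta)$.

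The main obstacle will be this reorganization step: assembling the filler objects of the $n$-exact sequences compatibly at every jump of $\delta$ and verifying that the $n$-HN factors produced in this way meet the precise semistability axioms adopted in the paper's definition of an $n$-HN filtration. Everything else — existence of the underlying filtration, containment in $\M$, uniqueness, and agreement with the ambient filtration — is handed to us by Theorem~\ref{thm:1.1} and \cite{T-HN-filt}.
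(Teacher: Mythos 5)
Your plan reverses the paper's order of argument: the paper first proves part (1) intrinsically (Theorem~\ref{thm:nHNfilt}), constructing the filtration inside $\M$ by iterating the $n$-torsion-subobject functor with respect to the $n$-torsion classes $\bigcup_{r>s}\U_r$, using the embedding only to guarantee termination via the length of $\CA$; part (2) is then a separate comparison (Theorem~\ref{thm:comparison}). You instead want to build the classical HN filtration in $\CA$ via \cite{T-HN-filt}, show its terms lie in $\M$, and promote it to an $n$-HN filtration, getting (1) and (2) together. That reversal is a legitimate alternative route, but as sketched it has two genuine gaps.

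The first gap is that the terms of the \cite{T-HN-filt} filtration are torsion subobjects of $M$ with respect to the \emph{unions} $\bigcup_{s>s_i}T(\U_s)$, not with respect to individual $T(\U_i)$. Condition (1) of Theorem~\ref{thm:1.1} only tells you $t_{T(\U)}M\in\U$ for a single $n$-torsion class $\U$; to conclude that the actual filtration terms lie in $\M$ you must identify $\bigcup_{s>s_i}T(\U_s)$ with $T\bigl(\bigcup_{s>s_i}\U_s\bigr)$ and know that $\bigcup_{s>s_i}\U_s$ is itself an $n$-torsion class. Both are nontrivial (Propositions~\ref{prop:limits} and~\ref{prop:Tcommutes} in the paper), require the $\Filt\circ\Fac$ description of $T$, and are not invoked by you. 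Without them, the claim that ``every term of the filtration lies in $\M$ to begin with'' does not follow from Theorem~\ref{thm:1.1}.

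The second gap is that your ``repackaging'' worry is misdirected. You propose to use condition (3) of Theorem~\ref{thm:1.1} to build filler objects of an $n$-exact sequence, but every monomorphism in $\M$ already has an $n$-cokernel by the $n$-abelian axioms; the fact you actually need is Lemma~\ref{lem:sametors}, which says that the fundamental $n$-exact sequence of the $n$-torsion subobject and the canonical short exact sequence of the $\CA$-torsion subobject share the same leading monomorphism. Granting that lemma and the commutation results above, semistability of the $n$-HN factors is just Definition~\ref{def:n-slicing}; it is not the hard part. The real work that your sketch treats as free lives in Lemma~\ref{lem:sametors}, Proposition~\ref{prop:limits}, and Proposition~\ref{prop:Tcommutes}, none of which you state or prove.
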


Note that we need the `skeletally small' assumption in Theorems \ref{thm:1.1} and \ref{thm:1.2}(2) because for these results we rely on Theorem 7.3 of \cite{Kvamme2020}, also Theorem 4.3 of  \cite{Ebrahimi2020}, showing that under this assumption, any $n$-abelian category is embedded an abelian category as an $n$-cluster-tilting subcategory of $\CA$, see Theorem \ref{thm:embedding} below.

\smallskip

Another important concept in representation theory is the notion of Galois coverings, introduced by Gabriel in \cite{Gabriel, GabrielRoiter} and studied further by many authors since.
The initial aim was to reduce a problem for modules over an algebra $A$ to that of a category $\C$ with an action of a group $G$ such that $A$ is equivalent to the orbit category $\C/G$.
The theory has much evolved since its inception leading to a vast body of literature on the subject \cite{BG, Rid, Green, Asashiba, M-VdlP, BautistaLiu, DarpoIyama}.
In particular, it has been shown that several nice properties, such as local finiteness or Cohen-Maculay finiteness, are preserved by Galois coverings \cite{Gabriel, AHV}.
Recently, Darp\"{o} and Iyama show in \cite{DarpoIyama} that $n$-cluster-tilting subcategories are, under certain conditions, preserved by Galois coverings.
Their construction is based on the fact  that under certain technical conditions, which are described in Theorem~1.3, given a Galois covering functor $P: \C \lrt  \C/G$, there exists a Galois precovering functor $P_{\bullet}: \mmod \C \lrt \mmod(\C/G)$, called push-down functor, between the categories of finitely presented functors over $\C$ and $\C/G$ such that $P_{\bullet}(\M)$ is an $n$-cluster-tilting subcategory in $\mmod\C/G$, where $\M$ is a certain $n$-cluster tilting subcategory of $\mmod \C$.
We add to this by showing that under similar conditions as in \cite{DarpoIyama}, $n$-torsion classes, chains of $n$-torsion classes, and $n$-Harder-Narasimhan filtrations are preserved by Galois coverings.
Note that some authors use `G-covering' instead of `Galois covering' for this generalized version of the classical Galois covering theory, see e.g. \cite{Asashiba}.
More precisely, we show the following. Recall that the action of a group $G$ on $\C$ is called admissible if $gX\ncong X$ for each indecomposable object $X$ in $\C$ and each $g\neq 1$ in $G$, see Definition \ref{def:admissible} below.
See Theorem~\ref{thm:cov-main} and Proposition~\ref{prop:pushdownHN}.

\begin{theorem}
Let  $\C$ be a small locally bounded Krull-Schmidt $\k$-category  with an admissible action of a group $G$ on $\C$ inducing an admissible action on $\mmod \C$.
Suppose that $\M$ is an $n$-cluster tilting $G$-equivariant full subcategory of $\mmod \C$ such that $P_{\bullet}(\M)$ is functorially finite in $\mmod(\C/G)$.
If $\U$ is a $G$-equivariant $n$-torsion class of $\M$ then $P_{\bullet}(\U)$ is an $n$-torsion class of   $P_{\bullet}(\M)$.

Moreover, if $\U$ is a $G$-equivariant $n$-torsion class in $\M$ then the following statements hold for $M \in \M$.
\begin{enumerate}
	\item  An object $U^M$  in $\U$ is the torsion object of $M$ with respect to $\U$ if and only if $\Pc(U^M)$ is the torsion object of $\Pc(M)$ with respect to $\Pc(\U)$.
	\item If $\delta=\{\U_s : s\in [0,1]\}$ is a chain of $G$-equivariant $n$-torsion classes in $\M$ then
		$$0 =M_0 \subsetneq M_1 \subsetneq \dots \subsetneq M_{r-1} \subsetneq M_r = M$$
		is the $n$-Harder-Narasimhan filtration of $M$ with respect to $\delta$ in $\M$ if and only if
		$$0 =\Pc(M_0) \subsetneq \Pc(M_1) \subsetneq \dots \subsetneq \Pc(M_{r-1}) \subsetneq \Pc(M_r )= \Pc(M)$$
		is the $n$-Harder-Narasimhan filtration of $\Pc(M)$ with respect to the chain of $n$-torsion classes $\Pc(\delta)$ in $\Pc(\M)$.

	\item If $T(\U)$ is $G$-equivariant then $T(\Pc(\U))=\Pc(T(\U))$, that is the following diagram is commutative

{\small
\begin{tikzcd}
  \text{  \{$G$-equivariant $n$-torsion classes in $\M$ \}} \ar{r}{T(-)} \ar{dd}{\Pc(-)} & \text{\{$G$-equivariant torsion classes in $\mmod\C$\}}  \ar{dd}{\Pc(-)}\\
 	&  & \\
\text{ \{ $n$-torsion classes in $\Pc(\M)$ \}} \ar{r}{T(-)} & \text{\{torsion classes in $\mmod\C/G$ \}}
\end{tikzcd}}

\end{enumerate}
\end{theorem}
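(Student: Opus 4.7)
The plan is to exploit the fact that the push-down functor $\Pc$ is exact and intertwines the $G$-action, so that $G$-equivariant $n$-exact data in $\mmod\C$ descends cleanly to $\mmod(\C/G)$, together with the Hom-formula for Galois coverings
\[ \Hom_{\mmod(\C/G)}(\Pc X, \Pc Y) \cong \bigoplus_{g \in G} \Hom_{\mmod\C}(X, gY). \]
For the first assertion, given $\U$ a $G$-equivariant $n$-torsion class in $\M$ and any $N \in \Pc(\M)$, lift $N$ to $M \in \M$ with $\Pc(M) \cong N$. The $n$-torsion structure on $\U$ provides an $n$-exact sequence
\[ 0 \to U^M \to M \to V^M_1 \to \cdots \to V^M_n \to 0 \]
with $U^M \in \U$ and the cokernel piece $n$-torsionfree. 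Applying $\Pc$ yields an $n$-exact sequence in $\Pc(\M)$ whose torsion end $\Pc(U^M)$ lies in $\Pc(\U)$ by construction; that the torsionfree end is right-orthogonal to $\Pc(\U)$ follows from the Hom-formula together with $G$-equivariance of $\U$, since any non-zero morphism into $\Pc(V^M_n)$ from some $\Pc(U')$ lifts to a morphism $U' \to g V^M_n$ for some $g \in G$, and $gV^M_n$ remains $n$-torsionfree. Uniqueness of the resulting decomposition is inherited from $\M$ via the admissibility of the action, which forces $\Pc$ to be faithful on $\M$ up to the $G$-action.

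For part (2)(i), both directions rest on the uniqueness of the torsion object combined with the functoriality just established. Applying $\Pc$ to the canonical sequence for $M$ with respect to $\U$ produces the canonical sequence for $\Pc(M)$ with respect to $\Pc(\U)$, so $\Pc(U^M)$ must be the torsion object of $\Pc(M)$. Conversely, if $\Pc(U^M)$ is the torsion object of $\Pc(M)$, admissibility together with density of $\Pc$ on $\Pc(\M)$ allows one to recover that $U^M$ is the torsion object of $M$ in $\M$. Part (2)(ii) follows by induction along the chain $\delta$: the $n$-Harder-Narasimhan filtration constructed in Theorem \ref{thm:1.2} is characterized step-by-step by the property that each $M_i$ is the torsion object of $M$ relative to some $\U_{s_i} \in \delta$ and each consecutive quotient is $n$-semistable. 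Both of these characterizations are preserved in each direction by (i) and by the fact that $\Pc$ sends $n$-exact sequences to $n$-exact sequences.

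For part (2)(iii), one inclusion is immediate: since $T(\U)$ is $G$-equivariant by hypothesis, $\Pc(T(\U))$ is a torsion class in $\mmod(\C/G)$ (by the classical $n=1$ specialization of the argument above applied inside the ambient abelian setting) that contains $\Pc(\U)$, so by minimality $T(\Pc(\U)) \subseteq \Pc(T(\U))$. The reverse inclusion is the main obstacle, and I would attack it by producing a $G$-equivariant torsion class $\widetilde{\T} \subseteq \mmod\C$ with $\Pc(\widetilde{\T}) = T(\Pc(\U))$ and $\U \subseteq \widetilde{\T}$; once this is done, minimality of $T(\U)$ forces $T(\U) \subseteq \widetilde{\T}$ and hence $\Pc(T(\U)) \subseteq T(\Pc(\U))$. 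Concretely, I would take $\widetilde{\T}$ to be the full subcategory of $\mmod\C$ whose indecomposable summands push down into $T(\Pc(\U))$, and verify closure under extensions and quotients in $\mmod\C$ using the Hom-formula and the fact that the Galois covering $P$ induces a bijection between $G$-orbits of indecomposables in $\mmod\C$ and indecomposables in $\mmod(\C/G)$. This verification — showing that the pullback along $\Pc$ of a torsion class remains a $G$-equivariant torsion class — is the technical heart of the argument and is where the hypotheses on $\C$, $G$, and the admissible action are fully used.
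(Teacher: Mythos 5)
Your approach to the first assertion---that $\Pc(\U)$ is an $n$-torsion class of $\Pc(\M)$---has a gap. You argue that ``the torsionfree end is right-orthogonal to $\Pc(\U)$'' by lifting morphisms into $\Pc(V^M_n)$ and claiming ``$gV^M_n$ remains $n$-torsionfree,'' but this is not what Definition \ref{def:ntorsionclass} asks for: there is no standalone notion of ``$n$-torsionfree object'' in this framework, and the condition to be verified is not a Hom-vanishing against the last term. What must be shown is that for every $\Pc(U)\in\Pc(\U)$ the entire induced sequence
$$0 \to \Hom_{\Pc(\M)}(\Pc(U),\Pc(V^1)) \to \cdots \to \Hom_{\Pc(\M)}(\Pc(U),\Pc(V^n)) \to 0$$
is exact at every position, not just the right end. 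The Hom-formula you cite is precisely the right tool, but it has to be applied uniformly across the whole complex: the $G$-precovering property gives, for every $i$, an isomorphism $\bigoplus_{g\in G}\Hom_{\M}({}^g U,V^i)\cong\Hom_{\Pc(\M)}(\Pc(U),\Pc(V^i))$ commuting with the horizontal maps; since $\U$ is $G$-equivariant each ${}^g U$ lies in $\U$, so the bottom row of the resulting commutative ladder is a direct sum of exact complexes and is exact, and then the vertical isomorphisms transfer exactness to the top row. Checking only the final term, and only vanishing rather than exactness, establishes neither the injectivity at $V^1$ nor the exactness at the intermediate $V^i$'s, and one also needs to invoke Lemma 3.5 of \cite{Iyama2011a} to recognise the image of the original $n$-exact sequence under $\Pc$ as $n$-exact in $\Pc(\M)$.

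For part (3), your handling of $T(\Pc(\U))\subseteq\Pc(T(\U))$ by minimality is right (using that $\Pc(T(\U))$ is a torsion class because $T(\U)$ is $G$-equivariant), but you misidentify the reverse inclusion $\Pc(T(\U))\subseteq T(\Pc(\U))$ as the ``main obstacle'' and propose constructing a pullback torsion class $\widetilde\T$ from those objects whose indecomposable summands push down into $T(\Pc(\U))$, then verifying the torsion class axioms---a heavy route whose correctness you do not actually establish. No such construction is needed: since $T(\X)=\Filt(\Fac(\X))$, any $M\in T(\U)$ carries a finite filtration with subquotients in $\Fac(\U)$, and because $\Pc$ is exact this yields a filtration of $\Pc(M)$ with subquotients in $\Fac(\Pc(\U))$, whence $\Pc(M)\in\Filt(\Fac(\Pc(\U)))=T(\Pc(\U))$. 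For parts (1) and (2) your outline is in the right spirit, relying on uniqueness of torsion objects; but to make the inductive step in (2) rigorous you should explicitly record that $\Pc$ commutes with the unions $\bigcup_{r>s}\U_r$ and intersections $\bigcap_{t<s}\U_t$ of $G$-equivariant $n$-torsion classes, which is what allows each stage of the $n$-Harder-Narasimhan filtration to be transported along $\Pc$.
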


{\bf Acknowledgements: } 
JA would like to thank the third and fourth authors for their warm hospitality and also excellent mathematical discussions during his visit. 
The authors thank the referee for the careful reading of the paper and for insightful comments.

\section{Background}\label{sec:background}

An abelian category $\CA$ is said to be a \textit{length category} if every object of $\CA$ is of finite length, that is, every object has a finite filtration starting with the zero object  such that at each step the quotient is a simple object. Such a filtration usually is called a composition series or a Jordan-H\"{o}lder sequence of the object.
A category $\CA$ is called \textit{skeletally small} if the class of all isomorphism classes of objects in $\CA$ is a set.
In this paper, whenever we say that $\CA$ is an abelian category, we assume that $\CA$ is a skeletally small abelian length category.

Given a full subcategory $\X$ of $\CA$ which is closed under direct sums, we define the subcategory $\Fac(\X)$ of $\CA$ to be the full subcategory of $\CA$ whose objects are quotients of objects in $\X$,
$$\Fac (\X) = \{Y \in \CA : \exists \ \text{exact sequence} \ X \rt Y \rt 0, \text{ for some $X \in \X$}\}.$$
Similarly, the category $\Sub(\X)$ is the full subcategory of $\CA$ whose objects are subobjects of objects in $\X$,
$$\Sub (\X) = \{Y \in \CA :\exists \ \text{exact sequence} \  0 \rt Y \rt X, \text{ for some $X \in \X$}\}.$$
We say that $\X$ is a \textit{generating} subcategory of $\CA$ if $\Fac(\X)=\CA$.
Dually, we say that $\X$ is \textit{cogenerating} if $\Sub(\X)=\CA$.

We say that an object $M$ of $\CA$ is filtered by $\X$ if there exists a finite sequence of subobjects
$$M_0 \subset M_1 \subset \dots \subset M_n$$
such that $M_0=0$, $M_n =M$ and $M_i/M_{i-1} \in \X$ for all $1 \leq i \leq n$.
We denote by $\Filt{\X}$ the full subcategory of all objects filtered by $\X$.
Note that $\X$ is a full subcategory of $\Fac(\X)$, $\Sub(\X)$ and $\Filt(\X)$.

\subsection{$n$-cluster-tilting subcategories and $n$-abelian categories}
Let $n$ be an integer greater than or equal to $1$.
The theory of higher homological algebra started in \cite{Iyama2007a,Iyama2007} with the study of the so-called \textit{$n$-cluster-tilting subcategories} of module categories. Their definition for arbitrary abelian categories is the following.

Let us preface the definition by recalling some notions.
Let $\X$ be a full subcategory of $\CA$.
We say that $\X$ is a contravariantly finite subcategory of $\CA$ if every object $A \in \CA$, admits a right $\X$-approximation, that is, for every $A \in \CA$ there exists a morphism $\pi : M \rt A$ with $M \in \X$ such that any other morphism $\pi': M' \rt A$, with $M' \in \X$, factors through $\pi$.
Dually, the notion covariantly finite subcategories is defined.
A functorially finite subcategory of $\CA$, is a subcategory which is both contravariantly and covariantly finite.

\begin{definition}\cite[Definition 3.14]{Jasso2016}
Let $\CA$ be an abelian category.
A functorially finite generating-cogenerating subcategory $\M$ of $\CA$ is \textit{$n$-cluster-tilting} if
$$\M  = \{X \in \CA : \Ext_{\CA}^i(X,M)=0 \text{ for all $M \in \M$ and all $1\leq i \leq n-1$}\}$$
$$ \ \ \ \ \  = \{Y \in \CA : \Ext_{\CA}^i(M,Y)=0 \text{ for all $M \in \M$ and all $1\leq i \leq n-1$}\}.$$
\end{definition}

The concept of $n$-abelian category was introduced in \cite{Jasso2016} as a generalisation of the classical concept of abelian categories, to formalize the homological structure of $n$-cluster-tilting subcategories.
The formal definition uses the notions of $n$-kernel and $n$-cokernel of a morphism, that we now recall.
Let $f^0: X^0 \rt X^1$ be a morphism in an additive category $\M$. A sequence of morphisms
$$X^1 \xrightarrow{f^1} X^2 \xrightarrow{f^2} \dots \xrightarrow{f^{n-1}} X^{n} \xrightarrow{f^n} X^{n+1}$$
 is called an \emph{$n$-cokernel of $f^0$} if, for every $M \in \M$, the following sequence
$$0 \rt \M(X^{n+1}, M) \rt \M(X^n, M) \rt \cdots \rt \M(X^1, M) \rt \M(X^0, M) $$
of abelian groups is exact. An $n$-cokernel of $f^0$ is denoted by $(f^1, \dots, f^{n})$.  The notion of \emph{$n$-kernel of a morphism} is defined dually. The sequence
$$X^0 \xrightarrow{f^0} X^1 \xrightarrow{f^1} \dots \xrightarrow{f^{n-1}} X^{n} \xrightarrow{f^n} X^{n+1}.$$
is called \emph{$n$-exac}t if $(f^1, \dots, f^{n})$  is an $n$-cokernel of $f^0$ and $(f^0, \dots, f^{n-1})$ is an $n$-kernel of $f^n$.

\begin{definition}\cite[Definition\;3.1]{Jasso2016}
Let $n$ be a positive integer.
An additive category $\M$ is \textit{$n$-abelian} if the following axioms hold:
\begin{enumerate}
\item[(A0)] $\M$ has split idempotents;
\item[(A1)] Every morphism in $\M$ has an $n$-kernel and an $n$-cokernel;
\item[(A2)] For every monomorphism $f^0: X^0 \rt X^1$ and any $n$-cokernel $(f^1, \dots, f^{n})$ {of $f^0$}, the following sequence is $n$-exact
$$X^0 \xrightarrow{f^0} X^1 \xrightarrow{f^1} \dots \xrightarrow{f^{n-1}} X^{n} \xrightarrow{f^n} X^{n+1};$$
\item[(A2$^{op}$)] For every epimorphism $f^n: X^{n} \rt X^{n+1}$ and any $n$-kernel $(f^0, \dots, f^{n-1})$ of $f^n$, the following sequence is $n$-exact
$$X^0 \xrightarrow{f^0} X^1 \xrightarrow{f^1} \dots \xrightarrow{f^{n-1}} X^{n} \xrightarrow{f^n} X^{n+1}.$$
\end{enumerate}
\end{definition}

The motivating example for $n$-abelian categories are $n$-cluster-tilting subcategories and indeed as stated below it is now known that these are the only small $n$-abelian categories.

\begin{theorem}\cite[Theorem 3.16]{Jasso2016}
Let $\M$ be an $n$-cluster-tilting subcategory of an abelian category $\CA$.
Then $\M$ is $n$-abelian.
\end{theorem}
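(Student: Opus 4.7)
The plan is to verify the four axioms (A0)--(A2$^{op}$) of an $n$-abelian category for $\M$, exploiting the ambient category $\CA$ together with the two-sided Ext-vanishing that defines an $n$-cluster-tilting subcategory.

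Axiom (A0) is essentially free: since $\M$ is cut out of $\CA$ by conditions preserved under direct summands, $\M$ is closed under summands in $\CA$; as $\CA$ is abelian it has split idempotents, so $\M$ inherits them.

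For (A1) I would construct $n$-cokernels (the dual argument yields $n$-kernels). Given $f^0 \colon X^0 \rt X^1$ in $\M$, form $C^1 = \Coker(f^0)$ in $\CA$. Since $\M$ is cogenerating and covariantly finite, there is a monomorphism $C^1 \hookrightarrow X^2$ with $X^2 \in \M$. Setting $C^2 = \Coker(C^1 \hookrightarrow X^2)$ and iterating, one obtains short exact sequences $0 \rt C^i \rt X^{i+1} \rt C^{i+1} \rt 0$ in $\CA$ with each $X^{i+1} \in \M$. After $n-1$ steps the defining Ext-vanishing of $\M$ forces $C^n \in \M$, so the process terminates with $X^{n+1} := C^n$. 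Splicing yields a long exact sequence in $\CA$
\[ X^0 \xrightarrow{f^0} X^1 \rt X^2 \rt \cdots \rt X^n \rt X^{n+1} \rt 0 \]
lying entirely in $\M$. To confirm this is an $n$-cokernel, apply $\Hom_\CA(-,M)$ for $M \in \M$; exactness of the resulting sequence of abelian groups follows by cutting the long exact sequence into the short ones above and dimension-shifting $\Ext^k_\CA(-,M) = 0$ for $1 \leq k \leq n-1$ across the $X^{i+1} \in \M$.

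For (A2), suppose $f^0$ is a monomorphism and $(f^1, \dots, f^n)$ is any $n$-cokernel. I would first argue that the sequence $X^0 \rt X^1 \rt \cdots \rt X^{n+1}$ is exact in $\CA$: the Hom-exactness defining an $n$-cokernel, applied against representable functors on $\M$ (which detects kernels and cokernels sufficiently because $\M$ is generating-cogenerating in $\CA$), pins the sequence down enough to compare with the canonical sequence constructed for (A1) and transfer exactness. Then to see that $(f^0, \dots, f^{n-1})$ is an $n$-kernel of $f^n$, I would apply $\Hom_\CA(M, -)$ for $M \in \M$, again split the sequence into short exact pieces in $\CA$, and use the dual Ext-vanishing $\Ext^k_\CA(M, -) = 0$ on $\M$ for $1 \leq k \leq n-1$. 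Axiom (A2$^{op}$) is symmetric, with $n$-kernels replacing $n$-cokernels and left $\M$-approximations replaced by right $\M$-approximations.

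The main obstacle is the step in (A2) of showing that an arbitrary $n$-cokernel---not just the specific one constructed via iterated approximations---produces an honestly exact sequence in the ambient $\CA$. The difficulty is extracting genuine exactness in $\CA$ from a condition phrased only in terms of $\Hom(-,M)$ restricted to $M \in \M$; the bridge is that $\M$ generates and cogenerates $\CA$, so a Yoneda-style argument promotes the Hom-exactness to honest exactness. Once this is in hand, the remaining verifications reduce to systematic dimension shifting against the two-sided Ext-vanishing that characterises $\M$.
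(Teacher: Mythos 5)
The paper cites this theorem from Jasso without giving a proof, so there is no in-paper argument to compare against; your strategy of verifying the axioms via iterated approximations inside $\CA$ and dimension-shifting against the two-sided Ext-vanishing is the one taken in Jasso's original proof.

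There is one genuine gap in your (A1): the maps $C^{i} \hookrightarrow X^{i+1}$ must be left $\M$-approximations, not merely monomorphisms into $\M$-objects. Covariant finiteness supplies the approximation and the fact that $\M$ cogenerates forces it to be monic, so both hypotheses you name are indeed present --- but it is the approximation property that does the work, and a bare monomorphism will not. Concretely, the groups your dimension-shift must kill are $\Ext^1_\CA(C^{j+1},M)$ for $M\in\M$, and the short exact sequence $0\to C^{j}\to X^{j+1}\to C^{j+1}\to 0$ identifies this group with the cokernel of $\Hom_\CA(X^{j+1},M)\to\Hom_\CA(C^{j},M)$; that cokernel vanishes precisely when $C^{j}\to X^{j+1}$ is a left $\M$-approximation, and without it neither the $\Hom$-exactness defining an $n$-cokernel nor the conclusion $C^n\in\M$ follows. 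Once that base case is in place, the shift $\Ext^i_\CA(C^n,M)\cong\Ext^1_\CA(C^{n-i+1},M)$ for $2\le i\le n-1$ does close (A1). For (A2), "pins the sequence down enough to compare" should be made precise by invoking the comparison lemma for $n$-cokernels: any two $n$-cokernels of a given morphism are homotopy equivalent, and $n$-exactness is a homotopy-invariant property, so it suffices to verify $n$-exactness of the canonical $n$-cokernel from (A1), which is an honest exact sequence in $\CA$ with all terms in $\M$; the dimension shift against $\Ext^k_\CA(M,-)$ then gives the $n$-kernel property as you indicate, and (A2$^{\mathrm{op}}$) is dual.
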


It is worth noticing that all $n$-exact sequences in $\M$ are the $n$-extensions of $\CA$ where all terms of the extensions are in $\M$.
The converse of the previous result also holds.

\begin{theorem} \cite[Theorem 4.3]{Ebrahimi2020} \cite[Theorem 7.3]{Kvamme2020} \label{thm:embedding}
Let $\M$ be an $n$-abelian category.
 Then there exists an abelian category $\CA$ and a fully faithful functor $F: \M \rt \CA$ such that $\F(\M)$ is an $n$-cluster-tilting subcategory of $\CA$.
\end{theorem}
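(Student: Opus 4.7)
The plan is to construct the ambient abelian category as $\CA := \mmod\M$, the category of coherent (i.e.\ finitely presented) additive contravariant functors $\M^{op} \rt \mathrm{Ab}$, and to take $F$ to be the Yoneda functor $Y : \M \rt \mmod\M$ sending $M$ to $\M(-,M)$. Yoneda's lemma immediately gives that $F$ is fully faithful and that each $Y(M)$ is projective in $\CA$, so the task splits into three pieces: (i) show that $\CA$ is abelian; (ii) show that $Y(\M)$ is a functorially finite, generating, cogenerating subcategory of $\CA$; and (iii) establish the two $\Ext$-vanishing characterisations of $Y(\M)$ inside $\CA$.

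For (i) I would invoke Freyd's classical criterion: $\mmod\M$ is abelian whenever $\M$ has weak kernels, and $\M$ has such for free because in any $n$-kernel $(f^0, \dots, f^{n-1})$ of a morphism $f^n$ in $\M$, the final map $f^{n-1}$ is a weak kernel of $f^n$ by the defining $\Hom$-exactness property. For the forward inclusion in (iii), given $M, N \in \M$ I would start from an $n$-exact sequence $M \rt X^1 \rt \dots \rt X^{n+1}$ produced by axiom A2 from a monomorphism out of $M$; applying $Y$ yields an exact complex of projectives in $\CA$, which serves as a partial projective resolution. Computing $\Ext^i_\CA(Y(M), Y(N))$ via $\Hom_\CA(Y(X^\bullet), Y(N)) \cong \M(X^\bullet, N)$ and using the defining exactness of the $n$-cokernel then directly yields $\Ext^i_\CA(Y(M), Y(N)) = 0$ for all $1 \le i \le n-1$, while the dual vanishing follows by the symmetric argument using axiom A2$^{op}$. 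Generating in (ii) is immediate from the definition of $\mmod\M$.

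The main obstacle is the combination of the cogenerating property in (ii) and the converse direction of (iii), namely that any $X \in \CA$ satisfying $\Ext^i_\CA(Y(M), X) = 0 = \Ext^i_\CA(X, Y(M))$ for all $M \in \M$ and $1 \le i \le n-1$ must be isomorphic to $Y(M_X)$ for some $M_X \in \M$. My strategy is a syzygy induction: starting from a presentation $Y(X^1) \rt Y(X^0) \rt X \rt 0$, use weak kernels in $\M$ to extend iteratively to a length-$(n+1)$ resolution $0 \rt Y(X^{n+1}) \rt Y(X^n) \rt \dots \rt Y(X^0) \rt X \rt 0$ by representables. The hypothesised Ext-vanishing, combined with repeated use of axioms A2, A2$^{op}$ and split idempotents (A0), forces the underlying complex in $\M$ to arise from an $n$-exact sequence and identifies $X$ itself, up to a summand killed by idempotent-completion, with an object of $Y(\M)$. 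Carrying out this iteration with care, together with a dual argument producing embeddings of arbitrary objects of $\CA$ into representables (which simultaneously gives cogeneration and functorial finiteness), is the technical core of the proofs in \cite[Theorem 4.3]{Ebrahimi2020} and \cite[Theorem 7.3]{Kvamme2020}.
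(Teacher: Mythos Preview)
The paper does not supply its own proof of this statement: Theorem~\ref{thm:embedding} is recorded in the background section purely as a citation of \cite[Theorem~4.3]{Ebrahimi2020} and \cite[Theorem~7.3]{Kvamme2020}, with no argument given. Consequently there is nothing in the paper to compare your proposal against.

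For what it is worth, your outline is broadly in the spirit of the cited references: the construction $\CA = \mmod\M$ with the Yoneda embedding, abelianness via Freyd's weak-kernel criterion, and the $\Ext$-vanishing computations via $n$-exact sequences are the standard ingredients. The delicate points you flag (cogeneration, functorial finiteness, and the converse $\Ext$-characterisation forcing an object back into $Y(\M)$) are indeed where the work lies in those papers, and your syzygy-induction sketch is a reasonable heuristic, though making it precise requires more than you have written. But since the present paper treats the result as a black box, any assessment of your proof must be made against \cite{Ebrahimi2020} and \cite{Kvamme2020} directly rather than against this paper.
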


Before going any further we introduce a running example that will help us to illustrate most of the results of this paper.

\begin{example}\label{ex:running}
Let $A$ be the path algebra of the quiver
\begin{tikzcd}
1 \ar{r}{\alpha} & 2 \ar{r}{\beta} & 3
\end{tikzcd}
modulo the ideal generated by the relation $\alpha\beta$.
The Auslander-Reiten quiver of $A$ can be seen in Figure~\ref{fig:Ar-quiverA}, where the dotted arrows correspond to the Auslander-Reiten translations in $\mmod A$.
It is well-known that the subcategory
$$\M = \add\left\{ \rep{3} \oplus \rep{2\\3} \oplus \rep{1\\2} \oplus \rep{1}	 \right\} $$
is a $2$-cluster tilting subcategory of $\mmod A$.
In Figure~\ref{fig:Ar-quiverA} you can find in red the indecomposable objects of $\mmod A$ that belong to $\M$.

\begin{figure}
    \centering
			\begin{tikzpicture}[line cap=round,line join=round ,x=2.0cm,y=1.8cm, scale = 1.5]
				\clip(-2.2,0.9) rectangle (2.1,2.5);
%					\draw [->] (-0.8,0.2) -- (-0.2,0.8);
%					\draw [->] (1.2,0.2) -- (1.8,0.8);
%					\draw [->] (3.2,0.2) -- (3.8,0.8);
					\draw [->] (-1.8,1.2) -- (-1.2,1.8);
					\draw [->] (0.2,1.2) -- (0.8,1.8);
%					\draw [->] (2.2,1.2) -- (2.8,1.8);
%					\draw [dashed] (-0.8,0.0) -- (0.8,0.0);
%					\draw [dashed] (1.2,0.0) -- (2.8,0.0);
					\draw [<-, dashed] (-1.8,1.0) -- (-0.2,1.0);
					\draw [<-, dashed] (0.2,1.0) -- (1.8,1.0);
%					\draw [dashed] (2.2,1.0) -- (3.8,1.0);
%					\draw [dashed] (-2.0,0.0) -- (-1.2,0.0);
%					\draw [dashed] (3.2,0.0) -- (4.0,0.0);
%					\draw [->] (0.2,0.8) -- (0.8,0.2);
%					\draw [->] (2.2,0.8) -- (2.8,0.2);
%					\draw [->] (-1.8,0.8) -- (-1.2,0.2);
					\draw [->] (-0.8,1.8) -- (-0.2,1.2);
					\draw [->] (1.2,1.8) -- (1.8,1.2);
%					\draw [->] (3.2,1.8) -- (3.8,1.2);
				
				\begin{scriptsize}
%					\draw[color=black] (-1,0) node {$\rep{2}$};
%					\draw[color=black] (1,0) node {$\rep{1\\2}$};
%					\draw[color=black] (3,0) node {$\rep{3}$};
					\draw[color=red] (-2,1) node {$\Rep{3}$};
					\draw[color=black] (0,1) node {$\Rep{2}$};
					\draw[color=red] (2,1) node {$\Rep{1}$};
%					\draw[color=black] (4,1) node {$\rep{2\\3}$};
					\draw[color=red] (-1,2) node {$\Rep{2\\3}$};
					\draw[color=red] (1,2) node {$\Rep{1\\2}$};
%					\draw[color=black] (3,2) node {$\rep{2\\3\\1}$};
				\end{scriptsize}
			\end{tikzpicture}
\caption{The Auslander-Reiten quiver of $A$}
    \label{fig:Ar-quiverA}
\end{figure}
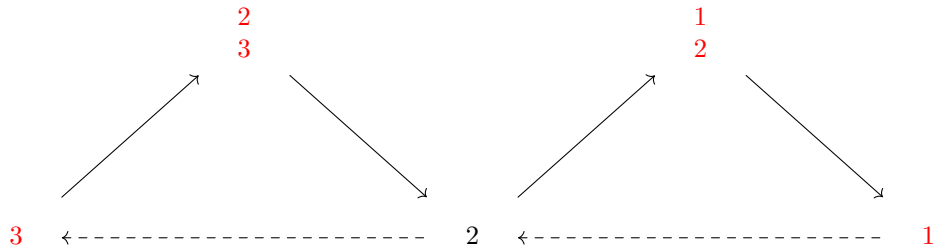

\end{example}

\subsection{Torsion and $n$-torsion classes}

Generalising the classical properties of abelian groups, Dickson introduced in \cite{Dickson1966} the notion of \textit{torsion pair} as follows.

\begin{definition}
Let $\CA$ be an abelian category.
Then the pair $(\T, \F)$ of full subcategories of $\CA$ is a \textit{torsion pair} if the following conditions are satisfied:
\begin{itemize}
\item $\Hom_{\CA} (X,Y)=0$ for all $X \in \T$ and $Y \in \F$.
\item For every module $M$ in $\CA$ there exists a short exact sequence
\[0 \rightarrow tM \xrightarrow{ \iota_M } M \xrightarrow{ \pi_M } fM \rightarrow 0\]
where $tM \in\mathcal{T}$ and $fM \in\mathcal{F}$.
\end{itemize}
This short exact sequence is unique up to isomorphisms and is known as the \textit{canonical short exact sequence} of $M$ with respect to $(\T, \F)$.
Moreover we say that $\T$ is a \textit{torsion class} and $\F$ is a \textit{torsion free class}.
\end{definition}

In the same paper \cite{Dickson1966} where he introduced the concept of torsion pair, Dickson gave an useful characterisation of torsion and torsion free classes.

\begin{theorem}\cite[Theorem 2.3]{Dickson1966}
A full subcategory $\T$ of an abelian category $\CA$ is a torsion class if and only if $\T$ is closed under factors and extensions.
Dually, a full subcategory $\F$ of an abelian category $\CA$ is a torsion free class if and only if $\F$ is closed under subobjects and extensions.
\end{theorem}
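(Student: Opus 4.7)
The plan is to establish both implications of the characterization, starting with the easier forward direction. Assume $(\T, \F)$ is a torsion pair. For closure under factors, take $X \in \T$ and an epimorphism $X \twoheadrightarrow Y$, and consider the canonical sequence $0 \to tY \to Y \to fY \to 0$. Since $\Hom_{\CA}(X, fY) = 0$, the composition $X \to Y \to fY$ vanishes; as $X \to Y$ is epi, $Y \to fY$ is itself zero, whence $fY = 0$ and $Y = tY \in \T$. For closure under extensions, given $0 \to X \to Z \to Y \to 0$ with $X, Y \in \T$, an analogous argument applied to $0 \to tZ \to Z \to fZ \to 0$ first shows that $X \to fZ$ is zero, so the inclusion $X \hookrightarrow Z$ factors through $tZ$, and then that the induced map $Y \cong Z/X \to fZ$ is zero, forcing $fZ = 0$ and $Z \in \T$.

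For the converse, assume $\T$ is closed under factors and extensions, and set $\F := \{Y \in \CA : \Hom_{\CA}(X, Y) = 0 \text{ for all } X \in \T\}$. The key step is to construct, for each $M \in \CA$, a largest subobject $tM \subseteq M$ lying in $\T$. I would first note that closure of $\T$ under extensions yields closure under finite direct sums via trivial extensions, and closure under factors then gives $A + B \in \T$ for any two subobjects $A, B \subseteq M$ in $\T$, since $A + B$ is a quotient of $A \oplus B$. Hence the subobjects of $M$ lying in $\T$ are closed under finite joins; because $M$ has finite length in $\CA$, its subobject lattice has finite length as well, every ascending chain stabilizes, and the join $tM := \bigvee \{N \subseteq M : N \in \T\}$ is attained by a finite join and therefore lies in $\T$.

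It then remains to check that $fM := M/tM$ lies in $\F$. If some $X \in \T$ admits a nonzero morphism $X \to M/tM$, its image $N$ is a quotient of $X$ and so lies in $\T$, and the preimage $N' \subseteq M$ of $N$ under $M \twoheadrightarrow M/tM$ fits into a short exact sequence $0 \to tM \to N' \to N \to 0$ with both outer terms in $\T$. Closure under extensions forces $N' \in \T$; but $N' \supsetneq tM$ contradicts maximality of $tM$, so $fM \in \F$, which produces the required canonical sequence. The dual statement for torsion-free classes follows by a formally symmetric argument: define $\T := \{X \in \CA : \Hom_{\CA}(X, Y) = 0 \text{ for all } Y \in \F\}$, construct the canonical quotient $fM$ via the dually extreme quotient of $M$ lying in $\F$ using closure under subobjects and extensions, and check that the kernel lies in $\T$.

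The main obstacle is producing the largest torsion subobject $tM$ from closure under factors and extensions alone: in a setting with arbitrary coproducts one normally needs explicit closure under coproducts to secure its existence, but the finite length of objects in $\CA$ reduces the relevant joins to finite joins inside the subobject lattice, which is exactly what makes the two hypotheses suffice. The remaining verifications, including the dual half, are formal diagram chases.
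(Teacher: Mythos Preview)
The paper does not supply its own proof of this statement: it is quoted as \cite[Theorem 2.3]{Dickson1966} and used as background, so there is nothing to compare your argument against. Your proof is correct under the paper's standing hypothesis that $\CA$ is a length category, and you have correctly identified that this hypothesis is doing real work: in Dickson's original setting one also requires closure under (arbitrary) coproducts, which you replace by the observation that finite joins of $\T$-subobjects suffice when every object has finite length. The forward direction and the verification that $M/tM \in \F$ are standard and correctly executed.
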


We denote by $\text{tors}(\CA)$ the set of all torsion classes in $\CA$. It is clear that the natural inclusion of sets induces a natural partial order in $\text{tors}(\CA)$.

Given a subcategory $\X$ of $\CA$, we denote by $T(\X)$ the minimal torsion class of $\CA$ containing $\X$.
It is well-known that $T(\X)$ coincides with all the objects of $\CA$ filtered by elements in the category $\Fac (\X)$, that is,  $T(\X)= \Filt(\Fac(\X))$. For a proof see \cite[Proposition 2.1]{Thomas2021}.

With the development of higher homological algebra, it is natural to consider higher analogues of torsion classes in this framework. The first such notion is introduced in \cite{Jorgensen2016}. The formal definition is as follows.

\begin{definition}\cite[Definition 1.1]{Jorgensen2016}\label{def:ntorsionclass}
Let $\M$ be an $n$-abelian category.
A full subcategory $\U$ of $\M$ is an \textit{$n$-torsion class} if for every $M\in \M$ there exists an $n$-exact sequence
\begin{equation}
  0 \xrightarrow{} U^M \xrightarrow{} M \xrightarrow{} V^1 \xrightarrow{ v^1 } \cdots \xrightarrow{ v^{ n-1 } } V^n \xrightarrow{} 0,
\end{equation}
where $U^M$ is an object of $\U$ and the sequence
\begin{equation}
  0 \xrightarrow{} \Hom_\M(U,V^1) \xrightarrow{ } \Hom_\M(U,V^2) \xrightarrow{  } \cdots \xrightarrow{  } \Hom_\M(U,V^n) \xrightarrow{} 0
\end{equation}
is exact, for all objects $U$ in $\U$. $U^M$ is called the $n$-torsion subobject of $M$ with respect to $\U$.
\end{definition}

\subsection{Harder-Narasimhan filtrations in abelian categories}

Inspired by the relation between stability conditions and torsion classes,  in \cite{T-HN-filt} the relation between Harder-Narasimhan filtrations and torsion classes was studied.
This was done through the introduction of chains of torsion classes as follows.

\begin{definition}\cite[Definition 2.1]{T-HN-filt}
A chain of torsion classes $\eta$ in an abelian category $\CA$ is a set of torsion classes
\[ \eta:= \{\T_s : s\in[0,1] \text{, } \T_0=\CA, \T_1=\{0\} \text{ and } \T_s \subseteq \T_r \text{ if } r\leq s\}.\]
We denote by $\T(\CA)$ the set of all chains of torsion classes of $\CA$.
\end{definition}

\
Associated to every chain of torsion classes $\eta \in \T(\CA)$ there is a set
$$\CP_\eta = \{ \CP_t : t \in [0,1]\}$$
of full subcategories of $\CA$ where each $\CP_t$ is defined as follows.
Note that in from now on we assume $\bigcap_{s< 0} \T_s=\CA$ and $\bigcup_{s> 1} \T_s = \{0\}$.

\begin{definition}\label{def:chain}
Consider a chain of torsion classes $\eta\in \T(\CA)$.
For every $t \in [0,1]$ we have the subcategory $S_t= \bigcap_{s< t} \T_s\setminus \bigcup_{r> t} \T_r$.
We define $\CP_t$ to be
$$\CP_t:= \{ \text{coker}(\alpha) : \text{where $\alpha: tM_{>t} \rt M$ and $M \in S_t$ }
\},$$
where {$ t{(-)}_{>t}$} is the torsion functor associated  to the torsion class $\bigcup_{s> t} \T_s$ applied to $M$.
We then define the \textit{slicing} $\CP_\eta$ of $\eta$ to be the set $\CP_\eta:= \{ \CP_t : t\in [0,1] \}$.
\end{definition}

We note that this definition is different of the original definition introduced in \cite[Definition~2.8]{T-HN-filt}.
In the following proposition we show that both definitions are  equivalent.

\begin{proposition}
Let $\eta$ be a chain of torsion classes in $\T(\CA)$.
Then for every $t \in [0,1]$ we have that
$$
\CP_t=
\left(\bigcap\limits_{s<t} \T_s\right)  \cap \left(\bigcap\limits_{s>t} \F_s\right)
$$
where $\F_s$ is the torsion-free class such that $(\T_s, \F_s)$ is a torsion pair in $\CA$.
\end{proposition}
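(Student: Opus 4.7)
The plan is to set up a short preliminary observation and then verify the two inclusions. Let $\T_{>t} := \bigcup_{s>t}\T_s$, whose torsion functor $t(-)_{>t}$ already appears in Definition~\ref{def:chain}. The first task is to identify the corresponding torsion-free class as $\bigcap_{s>t}\F_s$: an object $X$ satisfies $\Hom_\CA(T,X)=0$ for every $T \in \bigcup_{s>t}\T_s$ if and only if $X \in \F_s$ for every $s > t$. Consequently, for every $M \in S_t$, the cokernel of $tM_{>t} \hookrightarrow M$ is precisely the torsion-free quotient of $M$ with respect to $\T_{>t}$, and hence lies in $\bigcap_{s>t}\F_s$.

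For the inclusion $\CP_t \subseteq \bigl(\bigcap_{s<t}\T_s\bigr) \cap \bigl(\bigcap_{s>t}\F_s\bigr)$, I would argue as follows. Any $N \in \CP_t$ is such a cokernel and thus sits in $\bigcap_{s>t}\F_s$. For the other factor, $M \in S_t \subseteq \bigcap_{s<t}\T_s$, and since each $\T_s$ is a torsion class and hence closed under quotients, the quotient $N$ of $M$ lies in every $\T_s$ with $s<t$.

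For the reverse inclusion, take a nonzero $N$ in $\bigl(\bigcap_{s<t}\T_s\bigr) \cap \bigl(\bigcap_{s>t}\F_s\bigr)$. The plan is to show $N \in S_t$ and then exhibit $N$ as a cokernel by taking $M := N$. To show $N \in S_t$ it suffices to rule out $N \in \T_r$ for any $r > t$: if $N \in \T_r$, pick $s$ with $t < s \leq r$; the chain condition $s \leq r$ gives $\T_r \subseteq \T_s$, so $N \in \T_s \cap \F_s$, forcing $\Hom_\CA(N,N)=0$ and hence $N = 0$, a contradiction. Choosing $M := N$, the inclusion $N \in \bigcap_{s>t}\F_s$ gives $tN_{>t}=0$, and therefore $\Coker(tN_{>t} \hookrightarrow N) = N$, placing $N \in \CP_t$.

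There is no substantial obstacle; the argument is essentially bookkeeping, with the chain structure of $\eta$ entering in exactly two places—to identify the torsion-free class of $\T_{>t}$ with $\bigcap_{s>t}\F_s$, and to rule out the scenario $N \in \T_r \cap \F_s$ for some $t < s \leq r$. The zero object is tacitly handled by regarding both sides as full subcategories of $\CA$.
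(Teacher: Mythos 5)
Your proof takes essentially the same route as the paper's: both identify $\left(\bigcup_{s>t}\T_s,\ \bigcap_{s>t}\F_s\right)$ as a torsion pair and argue by double inclusion, using closure of torsion classes under quotients for the forward inclusion and exhibiting $N$ as its own cokernel (taking $M:=N$) for the reverse. You are in fact more careful than the paper at one point — you explicitly verify $N\in S_t$ by ruling out $N\in\bigcup_{r>t}\T_r$ via the observation $\T_s\cap\F_s=\{0\}$, a step the paper's proof leaves implicit.
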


\begin{proof}
First recall that, for every $t \in [0,1]$,  by \cite[Proposition~2.6]{T-HN-filt} the tuple \[\left(\bigcup_{s>t} \T_s, \bigcap_{s>t} \F_s\right)\] is a torsion pair.
We now prove the statement by double inclusion.

Fix $t \in [0,1]$ and $X \in\left(\bigcap_{s<t} \T_s\right)  \cap \left(\bigcap_{s>t} \F_s\right)$.
Then, in particular, $X \in\bigcap_{s<t} \T_s$.
Moreover, the short exact sequence of $M$ associated to the torsion pair $\left(\bigcup_{s>t} \T_s, \bigcap_{s>t} \F_s\right)$ is isomorphic to
$$0 \rt 0 \rt X \rt X \rt 0$$
because $X \in\bigcup_{s>t} \T_s$.
Hence $X \in \{ \text{coker}(\alpha) : \text{where $\alpha: tM_{>t} \rt M$ and $M \in S_t$ }\}$.

In the other direction, take $X \in \{ \text{coker}(\alpha) : \text{where $\alpha: tM_{>t} \rt M$ and $M \in S_t$ }\}$.
Then $X$ is the torsion-free quotient of an object $X' \in \bigcap_{s< t} \T_s\setminus \bigcup_{r> t} \T_r$ with respect to the torsion pair $\left(\bigcup_{s>t} \T_s, \bigcap_{s>t} \F_s\right)$.
This implies that $X \in \bigcap_{s>t} \F_s$.
Moreover, since $\bigcap_{s< t} \T_s$ is a torsion class, we know that is close under quotients.
So $X\in \bigcap_{s< t} \T_s$.
Hence $X \in\left(\bigcap_{s<t} \T_s\right)  \cap \left(\bigcap_{s>t} \F_s\right)$.
\end{proof}

One of the main results in \cite{T-HN-filt} shows that every chain of torsion classes $\eta \in \T(\CA)$ induces a Harder-Narasimhan filtration for every non-zero object $M \in \CA$.
The formal statement is the following.

\begin{theorem}\cite[Theorem 2.9]{T-HN-filt}\label{thm:HN-filt}
Let $\CA$ be an abelian category and $\eta \in \T(\CA)$.
Then every object $M\in \CA$ admits a Harder-Narasimhan filtration with respect to $\eta$.
That is a filtration
$$M_0 \subset M_1 \subset \dots \subset M_n$$
such that:
\begin{enumerate}
\item $0 = M_0$ and $M_n=M$;
\item there exists $r_k \in [0,1]$ such that $M_k/M_{k-1} \in \CP_{r_k}$ for all $1\leq k \leq n$;
\item $r_1 > r_2 > \dots > r_n$.
\end{enumerate}
Moreover this filtration is unique up to isomorphism.
\end{theorem}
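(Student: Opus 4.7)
The plan is to build the filtration out of the torsion functors associated to the auxiliary torsion classes $\T_t^- := \bigcap_{s<t} \T_s$ and $\T_t^+ := \bigcup_{s>t} \T_s$. Both are torsion classes of $\CA$: intersections of torsion classes are again torsion classes, and for a totally ordered family the union is closed under quotients and (by a minimum argument) under extensions. Writing $t_t^-$ and $t_t^+$ for their torsion functors, I get $t_t^+ M \subseteq t_t^- M \subseteq M$ for every $M$, and $t \mapsto t_t^- M$ is weakly decreasing in $t$.

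First I will use that $\CA$ is a length category: $M$ has finite length, so only finitely many subobjects of $M$ arise as $t_t^- M$ for varying $t$. Hence this map is a step function, jumping at finitely many points $0 \leq r_n < \dots < r_1 \leq 1$; the distinct values, listed in increasing order, give the desired chain $0 = M_0 \subsetneq M_1 \subsetneq \dots \subsetneq M_n = M$ with $M_k = t_{r_{n-k+1}}^- M$. A short lemma identifies $M_{k-1}$ with $t_{r_k}^+ M$: since $t_s M$ is constant equal to $M_{k-1}$ on the open interval $(r_k, r_{k-1})$, the sum $\sum_{s > r_k} t_s M$, which computes $t_{r_k}^+ M$, is exactly $M_{k-1}$.

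Next I verify that each $M_k / M_{k-1}$ lies in $\CP_{r_k}$, using the cleaner description $\CP_t = (\bigcap_{s<t} \T_s) \cap (\bigcap_{s>t} \F_s)$ supplied by the preceding proposition. On the one hand $M_k \in \T_{r_k}^-$ by construction, and $\T_{r_k}^-$ is closed under quotients, so $M_k/M_{k-1} \in \T_{r_k}^-$. On the other hand, $M_{k-1} = t_{r_k}^+ M$ is the largest subobject of $M$ lying in $\bigcup_{s > r_k} \T_s$, so $M/M_{k-1}$ is torsion-free with respect to $\T_s$ for every $s > r_k$; the subobject $M_k/M_{k-1}$ inherits this property, placing it in $\bigcap_{s > r_k} \F_s$ as required.

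For uniqueness, the key input is the vanishing $\Hom_{\CA}(\CP_r, \CP_{r'}) = 0$ for $r > r'$, which follows by picking any $s \in (r',r)$: every object of $\CP_r$ lies in $\T_s$ and every object of $\CP_{r'}$ lies in $\F_s$. A standard HN argument by induction on the length of $M$ then matches any competing filtration with the one constructed above, because the top subobject of a competitor is forced to land in $M_1$ by this Hom-vanishing and the analogous statement for the quotient $M/M_1$ finishes the induction. The step I expect to be the most delicate is the bookkeeping at the breakpoints — verifying that $t_t^- M = \bigcap_{s<t} t_s M$ and $t_t^+ M = \bigcup_{s>t} t_s M$, and that the two viewpoints yield the same list $r_1 > \dots > r_n$ — and this is the place where the finite-length hypothesis is genuinely used.
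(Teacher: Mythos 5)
There is a genuine gap at the boundary $t = 1$. You assume that $0$ appears among the values of the decreasing step function $t \mapsto t_t^- M$ on $[0,1]$, listing $M_0 = 0$ as its smallest value. This can fail: take the chain with $\T_s = \CA$ for $s \in [0,1)$ and $\T_1 = \{0\}$. Then $\T_t^- = \bigcap_{s<t}\T_s = \CA$ for \emph{every} $t \in [0,1]$, so $t_t^- M = M$ identically and the step function has no jumps at all, even though for $M \neq 0$ the Harder--Narasimhan filtration must be $0 \subsetneq M$ with $M \in \CP_1$ (slope $r_1 = 1$). More generally, whenever $m := t_1^- M \neq 0$ your construction misses the bottom step $0 \subsetneq m$; this step does have slope $1$, because $m \in \bigcap_{s<1}\T_s$ while the torsion-freeness condition relative to $\bigcup_{s>1}\T_s = \{0\}$ is vacuous, so $m \in \CP_1$. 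Adding this case repairs the construction, but it is not covered by the ``bookkeeping at the breakpoints'' you flagged, since no breakpoint is involved. (There is also an indexing slip: with $r_n < \dots < r_1$ and $t \mapsto t_t^- M$ weakly decreasing, $t_{r_1}^- M$ sits near the \emph{bottom} of the chain, so $M_k = t_{r_{n-k+1}}^- M$ with $M_n = M$ cannot be right as stated.)

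Once repaired, your step-function description packages the same data as the recursive top-down construction the paper carries out in Theorem~\ref{thm:nHNfilt} for the $n$-abelian analogue, where the starting slope is read off directly as $\sup\{s : M \in \T_s\}$ and the case $\sup = 1$ is automatically included. The verification that $M_k/M_{k-1} \in \CP_{r_k}$ via the description $\CP_t = (\bigcap_{s<t}\T_s)\cap(\bigcap_{s>t}\F_s)$ and the $\Hom$-vanishing $\Hom_\CA(\CP_r,\CP_{r'}) = 0$ for $r > r'$ are fine. For uniqueness, note that the $\Hom$-vanishing alone does not directly give ``the top subobject of a competitor lands in $M_1$''; assuming $r_1 \geq r_1'$ it gives $M_1 \subseteq N_1$, then $\T_s \cap \F_s = 0$ for $s \in (r_1', r_1)$ forces $r_1 = r_1'$, and only then does the reverse containment, hence $M_1 = N_1$, follow.
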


\subsection{Galois coverings}\label{subsec:Galoiscoverings}

Let $\C$ be a skeletally small Krull-Schmidt $\k $-category, where $\k$ is a field.
Given a group $G$, we say that there is a $G$-action over $\C$ or simply $\C$ is a {\it $G$-category} if there is a  group homomorphism $A: G \lrt {\rm Aut}(\C)$, where ${\rm Aut}(\C)$ denotes the group of $\k$-linear automorphisms of $\C$.
We usually write $A_g$ instead of $A(g)$ and $g X$ for $A_g(X)$, for each $g \in G$ and $X \in \C$.

\begin{definition}\label{def:admissible}
With the above notations, the action of $G$ on $\C$ is called {\emph{admissible}} if $gX\ncong X$ for each indecomposable object $X$ in $\C$ and each $g\neq 1$ in $G$.
\end{definition}

Let $\C$ be a $G$-category.
The {\it orbit category} $\C/G$ of $\C$ by $G$ is a category whose objects are the objects of $\C$ and for every $X,Y \in \C/G$, the morphism set $\C/G(X,Y)$ is given by
\[
\left\{(f_{h, g })_{(g , h)} \in \prod_{(g , h)\in G\times G} \C(g X, h Y)\mid
(f_{h, g })_{(g , h)} \text{ is \textsf{rcf} and } \  f_{g' h,  g'g }= g' (f_{h, g }), \forall g' \in G\right\},
\]
where  \textsf{rcf}  denotes $(f_{h, g })_{(g , h)}$ being row and column finite, that is, for every $g \in G$ there are finitely many $h \in G$ such that $f_{h,g}\neq 0$ and, dually, for every $h'\in G$ there are finitely many $g' \in G$ such that $f_{h',g'}\neq 0$.
For two composable  morphisms $X \st{f}\lrt Y \st{f'}\lrt Z$ in $\C/G$, we define
\[f'f:= \left(\sum_{g' \in G}f'_{h, g'} f_{g', g }\right)_{(g , h) \in G\times G}.\]

There is a canonical functor $P : \C \lrt \C/G$ which is given by $P(X)=X$ and $P(f)= (\delta_{g , h} g  f)_{(g , h)}$, for every $X, Y \in \C$ and for every $f \in \C(X, Y)$.

Recall that a pair $(F, \varphi)$ of a functor $F: \C \lrt \C'$ and  a family $\varphi:=(\varphi_{g })_{g \in G}$ of natural isomorphisms $\varphi_{g}: F \lrt FA_{g }$ is called a $G$-invariant functor if, for every $g  , h \in G$, the following diagram is commutative

\[\begin{tikzcd}
F \ar{r}{\varphi_{g}} \ar{rd}{\varphi_{hg}} & FA_{g} \ar{d}{\varphi_{h} A_{g}}
\\ & FA_{hg}=FA_{h}A_{g}.
\end{tikzcd}\]

The family $\varphi:=(\varphi_{g})_{g  \in G}$ is called an \emph{invariant adjuster of $F$}.

\begin{definition}\label{k-iso}\cite[Definition 1.7]{Asashiba}
Let $F: \C \lrt \C'$ be a $G$-invariant functor. Then $F$ is called a \emph{$G$-precovering functor}, if for every $X,Y \in\C$ the $\k $-homomorphisms
$$ F^{(1)}_{X,Y}: \bigoplus_{g  \in G} \C(g  X, Y) \lrt \C'(FX, FY), \ \ \ \ (f_{g })_{g  \in G} \mapsto \sum_{g  \in G} F(f_{g}) \varphi_{g, X};$$
$$ F^{(2)}_{X,Y} : \bigoplus_{g \in G} \C(X, g Y)\lrt \C'(FX, FY), \ \ \ \ (f_{g})_{g \in G} \mapsto \sum_{g\in G}\varphi_{g^{-1}, gY} F(f_{g}),$$
are isomorphisms. If $F$ is also dense, then it is called a \emph{$G$-covering functor}.
\end{definition}

It is shown in \cite[Proposition 2.6]{Asashiba} that $P: \C \lrt \C/G$ is a $G$-covering functor which is universal among $G$-invariant functors starting from $\C$.

We say that the $\k$-category $\C$ is locally bounded if for each indecomposable $X\in \mathcal{C}$, we have that
$$\sum_{Y \in \ind \mathcal{C}}(\dim_\k(\Hom_\C(X,Y)+\dim_\k(\Hom_\C(Y,X)) < \infty.$$
From now on, assume moreover that $\C$ is a locally bounded $\k$-category.

Let $\C$ be a $G$-category. The $G$-action on $\C$ induces a $G$-action on $\Mod \C$, where $\Mod \C$ denotes the category of  contravariant functors from $\C$ to $\Mod \k$. In fact, for each $g  \in G$, we can define an automorphism  $\overline{A}_{g }:\Mod \C \lrt \Mod \C$ by
\[ \overline{A}_{g }(M)={}^{g } M := M\circ A_{g }^{-1},\]
for all $M \in \Mod \C$.
It follows from the definitions that for every $M \in \C$, ${}^{g }\C(-,M) = \C(g ^{-1}(-),M) \cong \C(-, gM)$.

The canonical functor $P: \C \lrt \C/G$ induces a functor $P^{\bullet}: \Mod (\C/G) \lrt \Mod \C$, given by $P^{\bullet}(M)=M \circ P$ for every $M \in \Mod (\C/G)$. This functor is called the \emph{pull-up of $P$}.
It is well known that $P^{\bullet}$ possesses a left adjoint  $\Pc: \Mod \C \lrt \Mod(\C/G)$, which is called the \emph{push-down functor}.
For an explicit description of this functor see the proof of Theorem 4.3 of \cite{Asashiba}. It follows that the push-down functor $\Pc$ is exact.

A functor $M \in \Mod\C$ is called \emph{finitely presented} if there exists an exact sequence $( - , Y) \lrt ( - , X) \lrt M \lrt 0$ in $\Mod\C$. Let $\mmod\C$ be the full subcategory of $\Mod\C$ consisting of all finitely presented modules. It is known \cite[Theorem~4.3]{Asashiba} that the restriction of the push-down functor to $\mmod\C$ induces a functor
\[\Pc: \mmod \C \lrt \mmod (\C/G)\]
again denoted by $\Pc$, which is a $G$-precovering functor.

The central result relating higher homological algebra and Galois coverings was proved by Darp\"o and Iyama and it reads as follows. Recall that a subcategory $\X$ of $\mmod\C$ is said to be \textit{$G$-equivariant} if ${}^g\X=\X$, for all $g \in G$.

\begin{theorem}\cite[Theorem~2.14]{DarpoIyama}\label{thm:DarpoIyama}
Let $\C$ be a locally bounded Krull-Schmidt $G$-category with an admissible action of $G$ on $\C$ inducing an admissible action on $\mmod \C$.
If $\M$ is a $G$-equivariant full subcategory of $\mmod \C$ such that $P_{\bullet}(\M)$ is functorially finite in $\mmod (\C/G)$, then $\M$ is an $n$-cluster tilting subcategory of $\mmod \C$ if and only if $P_{\bullet}(\M)$ is an $n$-cluster tilting subcategory of $\mmod (\C/G)$.
\end{theorem}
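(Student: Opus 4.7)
The plan is to reduce the equivalence to a single key calculation — a $G$-equivariant push-down isomorphism for $\Ext$ — and then read off both defining properties of $n$-cluster-tilting subcategories (the functorial-finiteness and generating-cogenerating parts, and the $\Ext$-vanishing part) across the covering. The $G$-equivariance of $\M$ is what makes the right-hand side of the Ext-formula land back inside $\M$, which is the engine that powers every transfer.

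First I would establish, for $X, Y \in \mmod\C$, an isomorphism
\[
\Ext^i_{\mmod(\C/G)}(\Pc(X), \Pc(Y)) \;\cong\; \bigoplus_{g \in G} \Ext^i_{\mmod\C}(X, {}^g Y)
\]
for every $i \geq 0$. The case $i=0$ is the $G$-precovering isomorphism of Definition~\ref{k-iso} extended from $\C$ to $\mmod\C$. For higher $i$ one uses that $\Pc$ is exact and sends the representable $\C(-, Z)$ to $(\C/G)(-, Z)$; hence a projective resolution of $X$ in $\mmod\C$ pushes down to a projective resolution of $\Pc(X)$ in $\mmod(\C/G)$. Applying $\Hom(-, \Pc(Y))$ to this push-down resolution and invoking the degree-zero precovering isomorphism in each homological degree yields the formula. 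Local boundedness combined with admissibility guarantees that for any fixed $i$ only finitely many summands are nonzero.

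Next I would translate the defining conditions. For the $\Ext$-vanishing: if $\M$ is $n$-cluster-tilting, then since ${}^gY \in \M$ by $G$-equivariance, each summand on the right vanishes for $X, Y \in \M$ and $1 \le i \le n-1$, giving the vanishing for $\Pc(\M)$. Conversely, if $A \in \mmod(\C/G)$ satisfies $\Ext^i_{\mmod(\C/G)}(A, \Pc(\M)) = 0$ for $1 \le i \le n-1$, I would lift $A$ via a projective presentation in $\mmod(\C/G)$ — whose terms are push-downs of representables — to some $\tilde A \in \mmod\C$ with $\Pc(\tilde A) \cong A$. The Ext-formula then forces $\Ext^i_{\mmod\C}(\tilde A, M) = 0$ for every $M \in \M$, so the $n$-cluster-tilting property upstairs places $\tilde A \in \M$, and hence $A \in \Pc(\M)$. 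The other $\Ext$-variable is symmetric. The generating/cogenerating property transfers by the same lifting procedure together with exactness of $\Pc$; functorial finiteness of $\Pc(\M)$ is assumed, while functorial finiteness of $\M$ in $\mmod\C$ follows by pulling right/left $\Pc(\M)$-approximations of $\Pc(M)$ up along the precovering and using the invariant adjuster to correct the resulting $G$-ambiguity.

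The main obstacle I anticipate is the lifting step: one needs that every $A \in \mmod(\C/G)$ arises (possibly non-canonically) as $\Pc(\tilde A)$ for some $\tilde A \in \mmod\C$, and that this lift is compatible enough to read $\Ext$-vanishing against an entire $G$-orbit. This is precisely where admissibility of the induced action on $\mmod\C$ is indispensable: without it, distinct $G$-orbits could collapse under $\Pc$, the direct sum in the Ext-formula would acquire spurious identifications, and the back-and-forth between $\Ext^i_{\mmod\C}(X, {}^gY)$ and $\Ext^i_{\mmod(\C/G)}(\Pc(X), \Pc(Y))$ would fail to be an honest orbit decomposition. Once the admissible lifting is under control, the remainder is a routine combination of exactness of $\Pc$ and the precovering identities.
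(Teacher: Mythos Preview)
The paper does not prove this statement: Theorem~\ref{thm:DarpoIyama} is quoted verbatim from \cite[Theorem~2.14]{DarpoIyama} as a background result in Subsection~\ref{subsec:Galoiscoverings}, with no proof or proof sketch given. There is therefore nothing in the present paper to compare your proposal against.

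That said, your outline is the standard route to such results in covering theory and is essentially the strategy of \cite{DarpoIyama}: one establishes the orbit $\Ext$-isomorphism $\Ext^i_{\mmod(\C/G)}(\Pc X,\Pc Y)\cong\bigoplus_{g\in G}\Ext^i_{\mmod\C}(X,{}^gY)$ via exactness of $\Pc$ on projective resolutions, and then transfers both the $\Ext$-orthogonality and the approximation properties across the covering using $G$-equivariance of $\M$ and admissibility. Your identification of the lifting step (density of $\Pc$ on $\mmod$, up to the admissibility hypothesis) as the delicate point is accurate; in \cite{DarpoIyama} and \cite{Asashiba} this is handled by the general theory of $G$-precoverings and the Krull--Schmidt/local-boundedness assumptions, so your sketch would go through once those references are invoked.
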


For more details on the covering theory we refer the reader to \cite{Asashiba, BautistaLiu, DarpoIyama}.

\section{Minimal torsion classes containing $n$-torsion classes}\label{sec:ntorsvstors}
Let $\M$ be an $n$-abelian category. By \cite{Kvamme2020}, there exists an abelian category $\CA$ and a fully faithful functor $F: \M \rt \CA$ such that $F(\M)$ is an $n$-cluster-tilting subcategory of $\CA$.
Throughout the section we fix an $n$-abelian category $\M$ and consider it as the $n$-cluster tilting subcategory of the abelian category $\CA$.

For an $n$-torsion class $\U \subseteq \M$, let $T(\U)$ denote the smallest torsion class of $\CA$ containing $\U$ and for any $M \in \M$, denote by  $U^M$  the $n$-torsion object  of $M$ with respect to the $n$-torsion class $\U$.
For a torsion class {$\T$} in $\CA$ and for $M \in \CA$ denote by $tM$ the torsion object of $M$ with respect to {$\T$}.

\begin{lemma}\label{lem:sametors}
Let $\U$ be an $n$-torsion class  in $\M$ and let $M \in \M$.
Then $$tM \cong U^M$$
where $tM$ is the torsion object of  $M$ with respect to $T(\U)$.
In other words, for all $M \in \M$, the fundamental $n$-exact sequence
\begin{equation}
\label{equ:fundamental_n-sequence}
  0 \xrightarrow{} U^M \xrightarrow{ u } M \xrightarrow{} V^1 \xrightarrow{ v^1 } \cdots \xrightarrow{ v^{ n-1 } } V^n \xrightarrow{} 0
\end{equation}
of $M$ with respect to $\U$ is isomorphic to
\begin{equation}
\label{equ:fundamental_n-sequence3}
  0 \xrightarrow{} tM \xrightarrow{ \iota_M } M \xrightarrow{} V^1 \xrightarrow{ v^1 } \cdots \xrightarrow{ v^{ n-1 } } V^n \xrightarrow{} 0.
\end{equation}
\end{lemma}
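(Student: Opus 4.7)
The plan is to identify the short exact sequence
\[ 0 \to U^M \to M \to W \to 0 \]
obtained from the $n$-exact sequence~(\ref{equ:fundamental_n-sequence}) by setting $W = \mathrm{image}(M \to V^1)$, and then show that it is the canonical short exact sequence for $M$ with respect to the torsion pair $(T(\U),\F)$ in $\CA$. Uniqueness of the canonical sequence will then force $U^M \cong tM$, as required.

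To verify this, I would check the two defining conditions of the canonical sequence. The first is immediate: since $U^M \in \U \subseteq T(\U)$, the torsion term lies in $T(\U)$. The second is that $W$ lies in the torsion-free class $\F$ associated with $T(\U)$. Because $T(\U) = \Filt(\Fac(\U))$ and $\F$ is closed under subobjects and extensions, it suffices to show $\Hom_\CA(U,W) = 0$ for every $U \in \U$ (the vanishing extends first to $\Fac(\U)$ via right-exactness of the quotient on the source, then to $\Filt(\Fac(\U))$ via the long-exact-sequence argument for extensions).

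The main step is therefore to compute $\Hom_\CA(U,W) = 0$ for $U\in\U$. Split the $n$-exact sequence in $\CA$ into short exact pieces; in particular,
\[ 0 \to W \to V^1 \xrightarrow{v^1} V^2 \]
is exact, so applying $\Hom_\CA(U,-)$ identifies $\Hom_\CA(U,W)$ with the kernel of $\Hom_\CA(U,V^1) \to \Hom_\CA(U,V^2)$. Since $U, V^1, V^2$ all lie in the full subcategory $\M$, this kernel equals $\ker\bigl(\Hom_\M(U,V^1)\to\Hom_\M(U,V^2)\bigr)$, and the second clause in Definition~\ref{def:ntorsionclass} gives the exactness of
\[ 0 \to \Hom_\M(U,V^1) \to \Hom_\M(U,V^2) \to \cdots \to \Hom_\M(U,V^n) \to 0, \]
whose leftmost arrow is therefore injective. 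Hence $\Hom_\CA(U,W)=0$, finishing the verification.

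The only delicate point is the reduction from $T(\U)=\Filt(\Fac(\U))$ to the single vanishing $\Hom_\CA(U,W)=0$ for $U\in\U$; this is a routine torsion-theoretic observation, but deserves to be stated carefully because $W$ is not a priori assumed to be in $\M$, so one cannot transport the argument into the $n$-abelian setting and must work throughout in $\CA$. Once this is in place, the isomorphism of sequences in the statement follows by taking the canonical identification of two canonical sequences for the torsion pair $(T(\U),\F)$ evaluated at $M$.
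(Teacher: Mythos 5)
Your proof is correct and follows essentially the same route as the paper: both isolate the short exact sequence $0 \to U^M \to M \to \Coker u \to 0$ in $\CA$, use the injectivity of $\Hom_\M(U,V^1)\to\Hom_\M(U,V^2)$ (from the $n$-torsion-class axiom) to conclude $\Hom_\CA(U,\Coker u)=0$ for all $U\in\U$, and then invoke uniqueness of the canonical torsion sequence. The only difference is that you explicitly justify why the vanishing $\Hom_\CA(\U,W)=0$ forces $W\in T(\U)^\perp$ (the standard fact that $\Filt(\Fac(\U))^\perp=\U^\perp$), a step the paper leaves implicit.
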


\begin{proof}
Let $M \in \M$ and take the canonical $n$-exact sequence of $M$ with respect to $\U$
\begin{equation}
  0 \xrightarrow{} U^M \xrightarrow{ u } M \xrightarrow{} V^1 \xrightarrow{v_1 } V^2 \xrightarrow{ } \cdots \xrightarrow{  } V^n \xrightarrow{} 0.
\end{equation}
This induces the following exact sequence in $\CA$
\begin{equation}
  0 \xrightarrow{} \Coker{u}  \xrightarrow{} V^1 \xrightarrow{v^1 } V^2 \xrightarrow{ v^2 } \cdots \xrightarrow{v^{n-1} } V^n \xrightarrow{} 0.
\end{equation}
Applying the functor $\M(U, -)$ with $U \in \U$ we obtain the following exact sequence.
\begin{equation}
  0 \xrightarrow{} \M( U , \Coker{u} ) \xrightarrow{} \M(U, V^1) \xrightarrow{ v^1_* }  \M(U, V^2)
\end{equation}
But from the definition of $n$-torsion class we have that
\begin{equation}\label{eq:n-canonical}
  0 \xrightarrow{} \M(U, V^1) \xrightarrow{ v_*^1 } \cdots \xrightarrow{ v^{n-1}_* } \M(U, V^n) \xrightarrow{} 0
\end{equation}
is exact for every $U \in \U$.
In particular, {the exactness of the sequence} (\ref{eq:n-canonical}) implies that $v_1^* : \M(U, V^1) \rt \M(U, V^2)$ is injective.
Thus $\M(U, \Coker u)=0$ for every $U \in \U$.
This implies that
$$0 \rt U^M \overset{u}{\longrightarrow} M \longrightarrow \Coker u \rt 0$$
is such that $U^M \in T(\U)$ and $\Coker u \in (T(\U))^\perp$, where for a class $\T$ of objects of $\CA$,
$$\T^{\perp}=\{ Y \in \CA \mid \Hom_{\CA}(X,Y)=0, \ {\rm for \ all } \ X \in \T\}.$$
Since the canonical short exact sequence of any object with respect to a torsion pair is unique up-to-isomorphism, we conclude that $tM \cong U^M$.
\end{proof}

As a direct consequence of Lemma \ref{lem:sametors} we obtain the following result.

\begin{proposition}\label{prop:ntorstotors}
Let $\U_1,\U_2$ be two $n$-torsion classes in $\M$. Then $T(\U_1)=T(\U_2)$ if and only if $\U_1 = \U_2$.
\end{proposition}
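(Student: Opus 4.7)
One direction is immediate: if $\U_1=\U_2$ then the minimal torsion classes in $\CA$ containing them are also equal, so $T(\U_1)=T(\U_2)$. The substance of the proposition is the converse, and the strategy is to show that each $n$-torsion class $\U$ can be recovered from $T(\U)$ by intersecting with $\M$ (or equivalently, by taking $n$-torsion subobjects), after which the claim is immediate.

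First I would record the following consequence of Lemma~\ref{lem:sametors}: for every $U \in \U$, one has $U^U \cong U$. This is because $U$ lies in $\U\subseteq T(\U)$, so the torsion object $tU$ of $U$ with respect to $T(\U)$ is just $U$ itself, and Lemma~\ref{lem:sametors} identifies $tU$ with $U^U$. Dually, and more generally, for any $M\in\M$ the object $U^M$ depends only on the ambient torsion class $T(\U)$ and on $M$, not on the choice of the $n$-torsion class producing it.

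Now suppose $T(\U_1)=T(\U_2)$, and denote by $t$ the common torsion functor associated to this torsion class. For any $U \in \U_1$, applying Lemma~\ref{lem:sametors} to $\U_1$ and to $\U_2$ with $M = U$ yields isomorphisms
\[
U_1^U \;\cong\; tU \;\cong\; U_2^U.
\]
By the observation above, $U_1^U \cong U$, so $U \cong U_2^U \in \U_2$. Since $\U_2$ is closed under isomorphisms, we conclude $U\in\U_2$, giving $\U_1\subseteq\U_2$. By symmetry $\U_2\subseteq\U_1$, and therefore $\U_1=\U_2$.

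There is no real obstacle here beyond making precise the identification $U^U\cong U$ for $U\in\U$; all the work has already been done in Lemma~\ref{lem:sametors}, which provides the bridge between the $n$-torsion subobjects internal to $\M$ and the ordinary torsion subobjects in $\CA$.
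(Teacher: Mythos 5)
Your proof is correct and uses the same key ingredient as the paper, namely Lemma~\ref{lem:sametors} identifying the $n$-torsion subobject $U^M$ with the ordinary torsion subobject $tM$ in $\CA$; the paper merely runs the same argument in contrapositive form (picking $M\in\U_2\setminus\U_1$ and noting that $U_1^M\not\cong M=U_2^M$ forces $T(\U_1)\neq T(\U_2)$), whereas you argue directly from $T(\U_1)=T(\U_2)$ that $\U_1\subseteq\U_2$ and conversely.
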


\begin{proof}
The sufficiency is clear, so we show the necessity.
Let $M \in \U_2 \setminus \U_1$.
Then the torsion object $U_1^M$ of $M$ with respect to $\U_1$ is not isomorphic to $U_2^M=M$.
By Lemma~\ref{lem:sametors} we have that the torsion object $t_1 M$ of $M$ with respect to $T(\U_1)$ is not isomorphic to the torsion object $t_2 M = M$ of $M$ with respect to $T(\U_2)$.
Hence $T(\U_1)$ is different from $T(\U_2)$.
\end{proof}

In recent years there has been a great deal of interest regarding the poset of torsion classes in the module category of an algebra ordered by inclusion.
As a consequence of our previous result, we have the following.

\begin{corollary}\label{cor:poset}
The map $T(-): n\text{-tors}(\M) \rt \text{tors}(\CA)$ from the set of $n$-torsion classes $n\text{-tors}(\M)$ of the $n$-cluster-tilting subcategory $\M$ to the set of torsion classes $\text{tors}(\CA)$ of the abelian category $\CA$ is injective and respects the order given by the inclusion.
\end{corollary}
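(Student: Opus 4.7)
The statement combines two claims about the map $T(-)$: injectivity and order preservation. Both claims follow almost immediately from results already established in the text, so the plan is essentially to assemble them.

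For injectivity, I would simply invoke Proposition \ref{prop:ntorstotors}, which was just proved: it asserts precisely that $T(\U_1) = T(\U_2)$ forces $\U_1 = \U_2$. This gives injectivity with no additional work.

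For order preservation, the plan is to argue directly from the universal property defining $T(-)$. Suppose $\U_1 \subseteq \U_2$ are $n$-torsion classes in $\M$. Since $T(\U_2)$ is a torsion class of $\CA$ containing $\U_2$, it in particular contains $\U_1$. But $T(\U_1)$ is by definition the \emph{minimal} torsion class of $\CA$ containing $\U_1$, so $T(\U_1) \subseteq T(\U_2)$. This is just the standard monotonicity of ``smallest subcategory closed under ...'' operators.

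There is no genuine obstacle here; the proof is a two-line assembly of Proposition \ref{prop:ntorstotors} and the defining universal property of $T(-)$. The only thing to note is that both the domain $n\text{-tors}(\M)$ and the codomain $\text{tors}(\CA)$ are posets under inclusion (this is standard for torsion classes, and for $n$-torsion classes it is implicit in Definition \ref{def:ntorsionclass}), so ``morphism of posets'' is meaningful. No further ingredient is needed.
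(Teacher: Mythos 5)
Your proposal is correct and matches the paper's proof: both cite Proposition \ref{prop:ntorstotors} for injectivity, and for monotonicity you simply spell out the one-line minimality argument that the paper dismisses as ``immediate.''
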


\begin{proof}
It follows from Proposition \ref{prop:ntorstotors} that if $\U_1$ and $\U_2$ are two distinct $n$-torsion classes in $\M$, then $T(\U_1)$ and $T(\U_2)$ are two distinct torsion classes in $\CA$.
The fact that $T(\U_1) \subset T(\U_2)$ if $\U_1 \subset \U_2$ is immediate.
\end{proof}

\begin{lemma}\label{lem:T(U)}
Let $\U$ be an $n$-torsion class of $\M \subset\CA$.
If $\T = T(\U)$ is the minimal torsion class of $\CA$ containing $\U$, then $\U = t \M$, where $t \M=\{tM : M\in \M\}$.
In particular we have $\T = T(t \M)$.
\end{lemma}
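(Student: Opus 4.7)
The plan is to prove the equality $\U = t\M$ by double inclusion, relying almost entirely on Lemma \ref{lem:sametors}, and then to derive the final assertion $\T = T(t\M)$ as an immediate consequence of $\U = t\M$.

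For the inclusion $t\M \subseteq \U$: given any $M \in \M$, Lemma \ref{lem:sametors} (applied with $\T = T(\U)$) says the torsion subobject $tM$ of $M$ with respect to $T(\U)$ is isomorphic to the $n$-torsion subobject $U^M$ of $M$ with respect to $\U$. Since $U^M$ belongs to $\U$ by definition of the $n$-torsion class, we obtain $tM \in \U$ for every $M \in \M$, which gives $t\M \subseteq \U$.

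For the reverse inclusion $\U \subseteq t\M$: take any object $U \in \U$. Since $\U \subseteq \M$ we have $U \in \M$, so we may form its $n$-torsion subobject $U^{U}$ with respect to $\U$. Because $U$ itself is already in $\U$, the canonical $n$-exact sequence of $U$ with respect to $\U$ can be taken to be the trivial one (with $V^1 = \cdots = V^n = 0$), so $U^{U} \cong U$. Applying Lemma \ref{lem:sametors} again, $tU \cong U^{U} \cong U$, which shows that $U \in t\M$. Combining both inclusions yields $\U = t\M$.

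The final clause is then immediate: since $\T = T(\U)$ and $\U = t\M$, substitution gives $\T = T(t\M)$.

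I do not expect any significant obstacle here: the work is all packed into the already-proved Lemma \ref{lem:sametors}, and the only subtle point is remembering that for $U \in \U$ the trivial $n$-exact sequence $0 \to U \to U \to 0 \to \cdots \to 0 \to 0$ qualifies as the fundamental $n$-exact sequence of $U$ with respect to $\U$, so that $U^U \cong U$. Once that is noted, both inclusions drop out in one line each.
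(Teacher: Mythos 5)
Your proof is correct and follows essentially the same double-inclusion strategy as the paper. The only slight difference lies in the second inclusion: the paper observes directly that $U \in \U \subseteq T(\U) = \T$ forces $tU = U$ (an object of a torsion class is its own torsion subobject), whereas you route through the observation $U^U \cong U$ and then invoke Lemma \ref{lem:sametors} once more; both arrive at the same place, with the paper's version being marginally more direct.
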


\begin{proof}
The fact that $\U \supseteq \{tM : M \in \M\}$ follows directly from Lemma \ref{lem:sametors}.
Let $U \in \U \subset \M$.
Then $U \in \T = T(\U)$.
Hence $tU= U$.
Thus $U \in \{tM : M\in \M\}$.
\end{proof}

\begin{lemma}\label{lem:iii}
Let $\M \subseteq \CA$ be an $n$-cluster tilting subcategory and $\T \subseteq \CA$ be a torsion class satisfying $t\M \subseteq \M$. Then $t\M$ is an $n$-torsion class of $\M$ if and only if $\Ext_{ \CA }^{ n-1 }( X, Y) = 0$ for all $X \in t\M$ and all $Y \in f\M$, where $f\M=\{\Coker(tM \hookrightarrow M) \mid M \in \M\}$.
\end{lemma}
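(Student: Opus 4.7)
The strategy is to construct, for each $M \in \M$, an $n$-exact sequence in $\M$ whose first non-trivial term is $tM$, and then analyse the induced Hom-sequence by dimension shifting in $\CA$. Since $tM \in \M$ by hypothesis, the inclusion $tM \hookrightarrow M$ is a monomorphism in $\M$; it admits an $n$-cokernel by axiom~(A1), which together with axiom~(A2) produces an $n$-exact sequence
\[ 0 \to tM \to M \to V^1 \to \cdots \to V^n \to 0, \]
and this is an exact sequence in $\CA$ with all terms in $\M$. Splicing at $fM = \Coker(tM \hookrightarrow M)$ extracts a further exact sequence $0 \to fM \to V^1 \to \cdots \to V^n \to 0$ in $\CA$. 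Two facts will be used throughout: $\Ext^i_\CA(U, V) = 0$ for $U, V \in \M$ and $1 \le i \le n-1$, and $\Hom_\CA(U, fM) = 0$ whenever $U \in t\M \subseteq \T$ and $fM \in \F$.

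For the direction $(\Leftarrow)$, split the tail $0 \to fM \to V^1 \to \cdots \to V^n \to 0$ into short exact sequences $E_k : 0 \to Z_k \to V^k \to Z_{k+1} \to 0$ with $Z_1 = fM$ and $Z_n = V^n$. The long exact sequence attached to $0 \to tM \to M \to fM \to 0$ gives $\Ext^i_\CA(U, fM) = 0$ for $1 \le i \le n-2$ automatically, and the hypothesis supplies the vanishing at $i = n-1$. Iterating the isomorphism $\Ext^i_\CA(U, Z_{k+1}) \cong \Ext^{i+1}_\CA(U, Z_k)$, valid for $1 \le i \le n-2$, then propagates this to $\Ext^1_\CA(U, Z_k) = 0$ for all $1 \le k \le n-1$, so that each $E_k$ remains short exact after applying $\Hom_\CA(U, -)$. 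Combining these with $\Hom_\CA(U, fM) = 0$ and the factorizations $V^k \to V^{k+1} = V^k \twoheadrightarrow Z_{k+1} \hookrightarrow V^{k+1}$ yields the required exactness of
\[ 0 \to \Hom_\CA(U, V^1) \to \cdots \to \Hom_\CA(U, V^n) \to 0 \]
at every position, so $t\M$ is an $n$-torsion class.

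For the direction $(\Rightarrow)$, suppose $t\M$ is an $n$-torsion class and let $0 \to U^M \to M \to V^1 \to \cdots \to V^n \to 0$ witness this for a fixed $M \in \M$. I first identify $U^M$ with $tM$: since $U^M \in t\M \subseteq \T$ embeds into $M$, one has $U^M \subseteq tM$; conversely, writing $C = \Coker(U^M \hookrightarrow M)$, which also equals the kernel of $V^1 \to V^2$ in $\CA$, the Hom-exactness at $V^1$ forces $\Hom_\CA(U, C) = 0$ for every $U \in t\M$, so applying this with $U = tM \in t\M$ (using $tM \in \M$) shows the composite $tM \hookrightarrow M \twoheadrightarrow C$ is zero and hence $tM \subseteq U^M$. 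The sequence therefore has the form $0 \to tM \to M \to V^1 \to \cdots \to V^n \to 0$, from which one extracts $0 \to fM \to V^1 \to \cdots \to V^n \to 0$. Applying $\Hom_\CA(U, -)$ and iterating the same dimension-shift isomorphisms yields $\Ext^{n-1}_\CA(U, fM) \cong \Ext^1_\CA(U, Z_{n-1})$, and the latter vanishes because surjectivity of the last map of the Hom-exact sequence kills the connecting homomorphism coming from $E_{n-1}$.

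The main technical point is the careful bookkeeping of the dimension-shift isomorphisms interpolating between $\Ext^{n-1}_\CA(U, fM)$ at one end and $\Ext^1_\CA(U, Z_{n-1})$ at the other; the specific degree $n-1$ appearing in the statement is precisely the one needed to bridge the two ends of the resolution of $fM$ by objects of $\M$, which is why the hypothesis must be stated in exactly that degree.
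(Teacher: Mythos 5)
Your proof is correct and reaches the same conclusion by a genuinely more elementary route than the paper. The paper invokes the long exact $\Hom$--$\Ext^n$ sequence associated to an $n$-exact sequence in an $n$-cluster-tilting subcategory (Jasso--Kvamme, Prop.~2.2 of \cite{Jasso2019}), which packages the relation $\CA(X,V^{n-1}) \to \CA(X,V^n) \to \Ext^n_\CA(X,tM) \to \Ext^n_\CA(X,M)$ in a single stroke; combining that with the $\Ext$-long exact sequence of $0 \to tM \to M \to fM \to 0$, the paper translates $\Ext^{n-1}_\CA(X,fM)=0$ directly into surjectivity of $v^{n-1}_*$, and uses $n$-exactness to say only the endpoints of the $\Hom$-sequence need checking. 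You instead splice $0 \to fM \to V^1 \to \cdots \to V^n \to 0$ into short exact sequences $E_k$ and do the dimension shifting by hand, verifying exactness of the $\Hom$-sequence at every position directly; this avoids the cited result at the cost of more bookkeeping, and in effect re-derives the relevant piece of it. Two small remarks in your favour: you explicitly justify that the $n$-torsion subobject $U^M$ coincides with $tM$ in the forward direction (via $U^M \subseteq tM$ from maximality and $tM \subseteq U^M$ from $\Hom(tM, C)=0$), a point the paper leaves implicit even though $\T$ need not equal $T(t\M)$; and your argument makes transparent exactly why degree $n-1$ is the right one, namely because $fM$ sits at homological distance $n-1$ from $V^n$ along the spliced resolution by objects of $\M$. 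One could streamline your $(\Leftarrow)$ direction by using, as the paper does, the $n$-exactness of the constructed sequence to reduce to the two endpoint conditions, but your from-scratch verification is sound.
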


\begin{proof}
Assume first that $t\M \subseteq \M$ is an $n$-torsion class. Let $X \in t\M$ and $Y \in f\M$ be arbitrary elements. So $Y=fM$, for some $M \in \M$ and there exists a short exact sequence
\begin{equation}
\label{equ:fundamental_sequence2}
  0 \xrightarrow{} tM \xrightarrow{ \iota_M } M \xrightarrow{} fM \xrightarrow{} 0,
\end{equation}
where the monomorphism $tM \st{\iota_M}{\hookrightarrow} M$ sits in the canonical $n$-exact sequence
\begin{equation}
\label{equ:lemma_sequence}
 0 \xrightarrow{} tM \xrightarrow{ \iota_M } M \xrightarrow{} V^1 \xrightarrow{ v^1 } \cdots \xrightarrow{ v^{ n-1 } } V^n \xrightarrow{} 0,
\end{equation}
in which the sequence
\begin{equation}
\label{equ:resolution}
  0  \xrightarrow{} V^1 \xrightarrow{ v^1 } \cdots \xrightarrow{ v^{ n-1 } } V^n \xrightarrow{} 0
\end{equation}
is $t\M$-exact.

The sequence \eqref{equ:fundamental_sequence2} induces a long exact sequence which contains
\begin{equation}\label{equ:extseq}
    \Ext_{ \CA }^{ n-1 }(X, M ) \xrightarrow{}   \Ext_{ \CA }^{ n-1 }(X, fM ) \xrightarrow{}   \Ext_{ \CA }^n(X, tM ) \xrightarrow{ \Ext_{ \CA }^n(X, \iota_M ) } \Ext_{ \CA }^n(X, M).
\end{equation}

Since $X \in t\M \subseteq \M$ while $\M$ is $n$-cluster tilting, the first term is zero and so the sequence reads as follows
\begin{equation}
\label{equ:new sequence}
     0 \xrightarrow{} \Ext_{\CA}^{ n-1 }(X, fM) \xrightarrow{} \Ext_{ \CA }^n(X, tM) \xrightarrow{ \Ext_{ \CA }^n(X, \iota_M) } \Ext_{ \CA }^n(X, M).
\end{equation}

Moreover, $X \in \M$ implies that the sequence \eqref{equ:lemma_sequence} induces a long exact $\Hom$-$\Ext^n$-sequence which contains the following, see \cite[Prop.\ 2.2]{Jasso2019},
\begin{equation}\label{equ:Jasso}
    {\CA}(X, V^{ n-1 } ) \xrightarrow{v^{n-1}_*} { \CA }(X, V^n ) \xrightarrow{} \Ext_{ \CA }^n(X, tM ) \xrightarrow{ \Ext_{ \CA }^n(X, \iota_M) } \Ext_{ \CA }^n(X, M).
\end{equation}
Since the sequence \eqref{equ:resolution} is $t\M$-exact, we deduce that $v^{n-1}_*$ is surjective and so the morphism $\Ext_{ \CA }^n(X, \iota_M)$ is injective.  This in view of the sequence \eqref{equ:new sequence} implies that $\Ext_{\CA}^{n-1}(X, Y)\;=\;0$

For the converse, assume that $\Ext_{ \CA }^{ n-1 }( X, Y) = 0$, for all $X \in t\M$ and all $Y \in f\M$. Consider $M \in \M$. Since $t\M \subseteq \M,$ we have an $n$-exact sequence

\begin{equation}
\label{equ:n-exact}
 0 \xrightarrow{} tM \xrightarrow{ \iota_M } M \xrightarrow{} V^1 \xrightarrow{ v^1 } \cdots \xrightarrow{ v^{ n-1 } } V^n \xrightarrow{} 0.
\end{equation}
in $\M$, where $\Coker(tM \st{\iota_M}{\hookrightarrow} M)=fM \in f\M$. To conclude the result we should show that the sequence
\begin{equation*}
  0 \xrightarrow{} V^1 \xrightarrow{ v^1 } \cdots \xrightarrow{ v^{ n-1 } } V^n \xrightarrow{} 0.
\end{equation*}
is $t\M$-exact. Since \eqref{equ:n-exact} is an $n$-exact sequence, we just need to show that for every $X \in t\M$, the induced morphism $\CA(X, v^1)$ is an injection and  the induced morphism $\CA(X, v^{n-1})$ is a surjection. But $\CA(X, v^1)$ is injective, because $\CA(t\M,  fM)=0$. To see that $\CA(X, v^{n-1})$ is surjective, it is enough to show that the morphism $\Ext_{ \CA }^n(X, \iota_M)$ in the sequence \eqref{equ:Jasso} is injective. But this follows from the sequence \eqref{equ:extseq} in view of the fact that by assumption $\Ext_{ \CA }^{ n-1 }(X, fM )=0$.
\end{proof}

We are now in place to give a characterisation of the torsion classes $\T$ in $\CA$ which are of the form $\T=T(\U)$ for some $n$-torsion class $\U$ of $\M$.

\bigskip
\begin{theorem}\label{thm:torsfromntors}
Let $\M \subseteq \CA$ be an $n$-cluster tilting subcategory and $\T$ be a torsion class of $\CA$.
Then $\T$ is of the form $\T=T(\U)$ for some $n$-torsion class $\U$ of $\M$ if and only if the following holds:
\begin{enumerate}
    \item $t\M \subseteq \M$;
    \item $\T=T(t\M)$;
    \item ${\rm Ext}^{n-1}_{\CA} (X, Y) = 0$ for all $X \in t\M$ and $Y \in f\M$.
\end{enumerate}
Moreover, in this case $\U= \T \cap \M=\{ tM : M \in \M \}$.
\end{theorem}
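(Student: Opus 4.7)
The plan is to assemble the three preceding lemmas, which already carry essentially all the technical content; the theorem itself amounts to a bookkeeping exercise distilling them into a characterisation.

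For the forward direction, I would assume $\T = T(\U)$ for some $n$-torsion class $\U$ of $\M$ and check (1)--(3) in sequence. Lemma \ref{lem:sametors} gives $tM \cong U^M$ for every $M \in \M$, and since $U^M \in \U \subseteq \M$, this immediately yields (1). Lemma \ref{lem:T(U)} then identifies $\U$ with $t\M$, so $\T = T(\U) = T(t\M)$, which is (2). Finally, since $t\M = \U$ is an $n$-torsion class of $\M$ sitting inside $\M$, the "only if" part of Lemma \ref{lem:iii} supplies the $\Ext^{n-1}$-vanishing in (3).

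For the reverse direction, I would assume (1), (2), (3). Conditions (1) and (3) are exactly the hypotheses of the "if" direction of Lemma \ref{lem:iii}, whence $t\M$ is an $n$-torsion class of $\M$. Setting $\U := t\M$, condition (2) then reads $\T = T(t\M) = T(\U)$, producing the desired $n$-torsion class.

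For the "moreover" clause, the equality $\U = \{tM : M \in \M\}$ is Lemma \ref{lem:T(U)} again. To identify this with $\T \cap \M$, the inclusion $\U \subseteq \T \cap \M$ follows from $\U \subseteq T(\U) = \T$ and $\U \subseteq \M$ by construction. For the reverse inclusion, if $X \in \T \cap \M$ and $0 \to tX \to X \to fX \to 0$ is the canonical sequence for $\T$, then the surjection $X \twoheadrightarrow fX$ is a morphism from $\T$ to $\F$ (using that $\T$ is closed under quotients and $\Hom_\CA(\T,\F) = 0$), hence zero; thus $fX = 0$ and $X = tX \in t\M = \U$.

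I do not expect a genuine obstacle here: the substantive work (torsion objects in $\CA$ agreeing with $n$-torsion objects in $\M$, the reconstruction $\U = t\M$, and the $\Ext^{n-1}$-criterion for $t\M$ to be an $n$-torsion class) has been isolated into Lemmas \ref{lem:sametors}, \ref{lem:T(U)} and \ref{lem:iii}. The only mildly delicate point is the standard observation that every object of a torsion class $\T$ equals its own $t$-subobject, which I would flag explicitly in the proof of the "moreover" assertion.
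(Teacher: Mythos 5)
Your proposal is correct and follows essentially the same route as the paper: necessity via Lemma \ref{lem:T(U)} (and its ingredient Lemma \ref{lem:sametors}), sufficiency via the ``if'' direction of Lemma \ref{lem:iii} together with condition (2), and the moreover clause by double inclusion using that every object of a torsion class is its own torsion subobject. The only cosmetic difference is that you spell out the standard fact $X \in \T \Rightarrow X = tX$ in the reverse inclusion, where the paper states it in one line; the underlying argument is identical.
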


\begin{proof}
\textit{Necessity}.
Suppose that $\T = T( \U )$ for an $n$-torsion class $\U \subseteq \M$.
We must show that $\T$ has all three characteristics as in the statement.
Parts 1. and 2. follow from Lemma \ref{lem:T(U)}.
In particular we have $\U= t\M \subseteq \M$, so Lemma \ref{lem:iii} applies to complete the proof of this part.

\smallskip
\textit{Sufficiency}.
Suppose that $\T$ is a torsion class in $\CA$ satisfying 1.-3.
We must show $\T = \T( \U )$ for an $n$-torsion class $\U \subseteq \M$, and in view of 2. this holds if $t\M$ is an $n$-torsion class in $\M$.
But $t\M \subseteq \M$ holds by 1, and so by 3. in view of Lemma \ref{lem:iii}, the sequence
\eqref{equ:lemma_sequence} is
is a fundamental $n$-exact sequence for $M$ with respect to $t\M$.
\smallskip

Now we show the moreover part of the statement.
It is already proven in Lemma~\ref{lem:T(U)} that $\U=\{ tM : M \in \M \}$.
So we only need to prove that $\U= \T \cap \M$.
We do it by double inclusion.
The fact that $\U \subset \M \cap \T$ follows immediately from the fact that $\T=T(\U)$.
Now, if $X \in \M \cap \T$ we have that $X = tX$.
Then $X \in \U$ by Lemma~\ref{lem:sametors}.
\end{proof}

\begin{example}
Let $A$ as in Example~\ref{ex:running} and consider the torsion classes $\T_1 = \add \{ \rep{2\\3} \oplus \rep{2} \oplus \rep{1\\2} \oplus \rep{1}\}$ and $\T_2 = \add \{\rep{2} \oplus \rep{1\\2}\oplus \rep{1} \}$ and $\T_3=\add \{\rep{1} \oplus \rep{3}\}$ in $\mmod A$.

First, we have that $\T_1$ verifies 1.--3. in the previous theorem and hence is the minimal torsion class containing the $2$-torsion class $\add\left\{\rep{2\\3} \oplus \rep{1\\2} \oplus \rep{1}\right\}$. On the other hand, neither $\T_2$ nor $\T_3$ are minimal classes containing a $2$-torsion class.

In the case of $\T_2$ one can see that 2. fails, that is,  $\T_2 \neq t\M$.
Indeed, $$\T_2 =\add \left\{\rep{2} \oplus \rep{1\\2}\oplus \rep{1} \right\} \neq  \add \left\{ \rep{1\\2}\oplus \rep{1}\right\}= t \M.$$
Nevertheless, we point out that $\T_2 \cap \M =  \add \left\{ \rep{1\\2}\oplus \rep{1}\right\}$ is a $2$-torsion class in $\M$.

Finally, in the case of $\T_3$ the problem is that 3. is not satisfied since the canonical exact sequence of $\rep{2\\3}$ with respect to the torsion pair $(\T_3, \F_3)$ is
$$0 \longrightarrow \Rep{3} \longrightarrow \Rep{2\\3} \longrightarrow \Rep{2} \longrightarrow 0 $$
and $\Ext^{1}_A(\rep{1}, \rep{2}) \not\cong 0$.
In fact, the minimal $2$-torsion class in $\M$ containing $\add \{\rep{1} \oplus \rep{3}\}$ is the whole of $\M$.
\end{example}

\section{Harder-Narasimhan filtrations in $n$-abelian categories}\label{sec:higherHNfilt}
We start this section by introducing chains of $n$-torsion classes.

\begin{definition}\label{def:n-chain}
A chain of $n$-torsion classes $\delta$ in an $n$-abelian category $\M$ is a set of $n$-torsion classes
$$\delta:= \{\U_s : s \in [0,1], \U_0=\M, \U_1=\{0\} \text{ and } \U_s \subseteq \U_r \text{ if } r\leq s\}.$$
We denote by $\T(\M)$ the set of all chains of $n$-torsion classes in $\M$.
\end{definition}
In this section we show that every chain of $n$-torsion classes $\delta$ in $\T(\M)$ induces an $n$-Harder-Narasimhan filtration for every non-zero object $M \in \M$.
We first need some preliminary results.

\begin{lemma}\label{lem:subtorsion}
Let $\U_1 \subset \U_2$ be two $n$-torsion classes in an $n$-abelian category $\M$ and let $M$ be an object of $\M$.
Take the $n$-torsion subobjects $U^M_1$, $U^M_2$ of $M$ with respect to $\U_1$ and $\U_2$, respectively.
Then the following hold:
\begin{enumerate}
    \item $U^M_1$ is a subobject of $U^M_2$.
    \item The torsion object $U^{U^M_2}_1$ of $U^M_2$ with respect to $\U_1$ is isomorphic to $U^M_1$.
\end{enumerate}
\end{lemma}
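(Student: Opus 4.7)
The plan is to reduce both statements to classical facts about nested torsion pairs in the ambient abelian category $\CA$, exploiting the comparison established in Lemma~\ref{lem:sametors} and Corollary~\ref{cor:poset}. Embed $\M$ as an $n$-cluster-tilting subcategory of $\CA$ via Theorem~\ref{thm:embedding}, and let $t_i$ denote the torsion functor on $\CA$ associated to the classical torsion class $T(\U_i)$. By Lemma~\ref{lem:sametors}, the $n$-torsion subobject $U^M_i$ of $M$ with respect to $\U_i$ is isomorphic to $t_i M$, and the monomorphism $U^M_i \hookrightarrow M$ in $\M$ coincides with the classical inclusion $t_i M \hookrightarrow M$ in $\CA$. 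Moreover, by Corollary~\ref{cor:poset}, $\U_1 \subsetneq \U_2$ implies $T(\U_1) \subseteq T(\U_2)$.

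For part (1), I would argue that $t_1 M$ is a subobject of $t_2 M$ in $\CA$ by the standard argument: since $T(\U_1) \subseteq T(\U_2)$, the object $t_1 M$ lies in $T(\U_2)$, so the composition
\[ t_1 M \hookrightarrow M \twoheadrightarrow M/t_2 M \]
lands in the torsion-free class associated to $T(\U_2)$, hence is zero. Therefore $t_1 M \hookrightarrow M$ factors through the canonical inclusion $t_2 M \hookrightarrow M$, giving a monomorphism $t_1 M \hookrightarrow t_2 M$ in $\CA$. Since $\M$ is a full subcategory of $\CA$ and both $t_1 M, t_2 M$ lie in $\M$, this monomorphism belongs to $\M$, yielding the desired subobject relation in $\M$.

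For part (2), I would prove the classical identity $t_1(t_2 M) \cong t_1 M$ in $\CA$ and then translate back via Lemma~\ref{lem:sametors}. Consider the short exact sequence
\[ 0 \to t_1 M \to t_2 M \to t_2 M / t_1 M \to 0 \]
in $\CA$ obtained from part (1). The quotient $t_2 M / t_1 M$ embeds into $M / t_1 M$, which is torsion-free with respect to $T(\U_1)$; since the torsion-free class is closed under subobjects, $t_2 M / t_1 M$ is also torsion-free with respect to $T(\U_1)$. This exhibits the above sequence as the canonical sequence of $t_2 M$ with respect to $T(\U_1)$, so by uniqueness $t_1(t_2 M) \cong t_1 M$. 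Applying Lemma~\ref{lem:sametors} once to $t_2 M = U^M_2 \in \M$ identifies $t_1(t_2 M)$ with $U^{U^M_2}_1$, and applying it again to $M$ identifies $t_1 M$ with $U^M_1$, yielding the required isomorphism in $\M$.

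The main subtlety to watch is that all the objects we manipulate remain inside $\M$: $U^M_i \in \U_i \subseteq \M$ by construction, and $U^{U^M_2}_1 \in \U_1 \subseteq \M$, so the classical arguments carried out in $\CA$ transfer cleanly to statements about subobjects in the $n$-abelian category $\M$. No deep obstacle is expected; the entire proof is a transparent application of Lemma~\ref{lem:sametors} plus the order-preservation in Corollary~\ref{cor:poset}.
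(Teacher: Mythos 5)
Your proof is correct, but it takes a genuinely different route from the one in the paper. The paper's argument is purely intrinsic to the $n$-abelian category $\M$: it never passes through the ambient abelian category. The key is the universal property of the $n$-torsion subobject (which the paper calls the ``universal properties of $n$-torsion objects''): for any $U \in \U$, every morphism $U \to M$ factors uniquely through $U^M \hookrightarrow M$, because the $n$-exactness of the canonical sequence together with the injectivity of $\M(U, V^1) \to \M(U, V^2)$ forces $\M(U, U^M) \cong \M(U, M)$. Part~(1) then follows immediately since $U^M_1 \in \U_1 \subseteq \U_2$, and part~(2) follows from a two-way comparison of subobjects: $U^{U^M_2}_1$ factors through $U^M_1$ as a subobject of $M$, and conversely $U^M_1$ factors through $U^{U^M_2}_1$ as a subobject of $U^M_2$, and uniqueness of the factorizations makes these mutually inverse.

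Your approach instead embeds $\M$ into $\CA$ via Theorem~\ref{thm:embedding}, translates $n$-torsion subobjects into classical torsion subobjects via Lemma~\ref{lem:sametors}, and reproves the classical identities $t_1 M \subseteq t_2 M$ and $t_1(t_2 M) \cong t_1 M$ for nested torsion classes in $\CA$. All of your intermediate steps check out: $t_1 M \in T(\U_1) \subseteq T(\U_2)$ gives a factorization through $t_2 M$, and $t_2 M / t_1 M \hookrightarrow M / t_1 M \in F(\U_1)$ shows the nested sequence is canonical for $T(\U_1)$. The trade-off is that your argument relies on the embedding theorem (and hence on the skeletal smallness hypothesis implicit in Section~3), whereas the paper's proof is self-contained and would apply to any $n$-abelian category with the given $n$-torsion classes. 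What you gain is that, once Lemma~\ref{lem:sametors} is in place, the reasoning reduces to completely standard torsion-pair algebra, which is perhaps easier to believe at a glance. Both are valid; the paper's is the leaner dependency graph.
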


\begin{proof}
1. This follows directly from the definition of $n$-torsion classes and the fact that $U^M_1$ is an object of $\U_2$.

\noindent
2. Since $U^M_2$ is a subobject of $M$, we have that $U_1^{U_2^M}$ is a subobject of $M$ which belongs to $\U_1$.
Then, by the universal properties of $n$-torsion objects we have that $U_1^{U_2^M}$ is a subobject of $U_1^{M}$.
On the other hand, we have by $1.$ that $U_1^M$ is a subobject of $U_2^M$.
Hence, the universal properties of torsion objects imply that $U_1^M$ is a subobject of $U_1^{U_2^M}$.
We can then conclude that $U_1^M$ is isomorphic to $U_1^{U_2^M}$.
\end{proof}

\begin{proposition}\label{prop:limits}
Let $\M$ be an $n$-abelian category and let $\delta$ be a chain of $n$-torsion classes in $\M$.
Then $\bigcup_{r> s} \U_r$ is an $n$-torsion class in $\M$ for all $s\in [0,1)$ and $\bigcap_{t< s} \U_t$ is an $n$-torsion class in $\M$ for all $s \in (0,1]$.
\end{proposition}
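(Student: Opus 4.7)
My plan is to embed $\M$ into a skeletally small abelian length category $\CA$ as an $n$-cluster-tilting subcategory using Theorem \ref{thm:embedding}, and then exploit the finite length of $\CA$ to force the torsion subobjects of a fixed $M \in \M$ to stabilize along the chain $\delta$. For each $r \in [0,1]$ write $U_r^M$ for the $n$-torsion subobject of $M$ with respect to $\U_r$. By Lemma \ref{lem:subtorsion}(1), if $r \leq r'$ then $\U_{r'} \subseteq \U_r$ and $U_{r'}^M \subseteq U_r^M$, so for each fixed $M$ the family $\{U_r^M\}_r$ is a monotone chain of subobjects of $M$ in $\CA$.

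For the union case, fix $s \in [0,1)$ and $M \in \M$. The subfamily $\{U_r^M : r > s\}$ is increasing as $r$ decreases, and since $M$ has finite length in $\CA$ it takes only finitely many distinct values, so there exists $r_0 > s$ with $U_r^M = U_{r_0}^M$ for all $r \in (s, r_0]$. I would set $U^M := U_{r_0}^M$, note that $U^M \in \U_{r_0} \subseteq \bigcup_{r>s}\U_r$, and take the fundamental $n$-exact sequence
\[ 0 \to U^M \to M \to V^1 \to \cdots \to V^n \to 0 \]
coming from $\U_{r_0}$ as the candidate witness. To check Hom-exactness for an arbitrary $U \in \bigcup_{r>s}\U_r$, I would pick $r_1 > s$ with $U \in \U_{r_1}$: if $r_1 \geq r_0$ then $U \in \U_{r_1} \subseteq \U_{r_0}$ and the condition holds by definition of $\U_{r_0}$ being an $n$-torsion class; if $r_1 < r_0$, stabilization gives $U_{r_1}^M = U^M$, so the $\U_{r_1}$-sequence has the same initial monomorphism $U^M \hookrightarrow M$ as the $\U_{r_0}$-sequence, and Hom-exactness for $U$ transfers between the two.

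The intersection case is dual and simpler. Here $\{U_t^M : t < s\}$ is decreasing and stabilizes at some $t_0 < s$ with $U_t^M = U_{t_0}^M$ for all $t \in [t_0, s)$. Setting $U^M := U_{t_0}^M$, membership $U^M \in \bigcap_{t<s}\U_t$ follows from stabilization for $t \in [t_0, s)$ and from $\U_{t_0} \subseteq \U_t$ for $t < t_0$; the $\U_{t_0}$-sequence witnesses the $n$-torsion class property directly, since $\bigcap_{t<s}\U_t \subseteq \U_{t_0}$.

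The main obstacle I expect is justifying the transfer of Hom-exactness between the $\U_{r_1}$- and $\U_{r_0}$-sequences in the last subcase of the union argument, which amounts to the statement that Hom-complex exactness for an $n$-exact sequence depends only on its initial monomorphism. I would prove this by splitting the $n$-exact sequence into short exact sequences in $\CA$ and using the Ext-vanishing $\Ext^j_\CA(\M,\M)=0$ for $1 \leq j \leq n-1$ characterising $n$-cluster-tilting subcategories, which lets one identify the cohomology of $\M(U, V^\bullet)$ with $\Ext^{i-1}_\CA(U, M/U^M)$, an invariant of the monomorphism $U^M \hookrightarrow M$ rather than of the chosen $n$-cokernel.
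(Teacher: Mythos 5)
Your proposal is correct and follows essentially the same route as the paper's proof: embed $\M$ in the ambient length category $\CA$, use finite length of $M$ to stabilize the monotone chain $\{U_r^M\}_{r>s}$ (resp. $\{U_t^M\}_{t<s}$) at some $r_0$, take $U^M := U_{r_0}^M$ with its $n$-cokernel sequence as the candidate, and split the Hom-exactness check into the two cases $r_1 \geq r_0$ and $r_1 < r_0$. The one point where you go beyond the paper is in explicitly flagging and justifying the transfer of Hom-exactness in the $r_1 < r_0$ case: the paper simply asserts that $U^M_{r_1} \cong U^M_{r_0}$ implies the exactness of $\M(U,V^\bullet)$ for $U \in \U_{r_1}$, silently using that this exactness depends only on the initial monomorphism $U^M \hookrightarrow M$; your identification $H^i(\M(U,V^\bullet)) \cong \Ext^{i-1}_\CA(U, M/U^M)$ via dimension shifting and the $n$-cluster-tilting Ext-vanishing gives a correct and clean reason (one could equivalently appeal to the uniqueness of $n$-cokernels up to homotopy in $\M$).
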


\begin{proof}
Let $\delta\in\T(\M)$ and $M \in \M$.
We first show that $\bigcup_{r> s} \U_r$ is an $n$-torsion class for all $s\in [0,1)$.
Take for every $t > s$ the $n$-torsion subobject $U^M_t$ of $M$ with respect to $\U_t$.
Then Lemma~\ref{lem:subtorsion} implies that we have an ascending chain of subobjects of $M$ as follows
$$0 = U_1^M \subset \dots \subset U_t^M \subset \dots \subset M.$$
Recall that $\CA$ is a length category, in particular $\CA$ is an Artinian category.
This implies that the above ascending chain of subobjects of $M$ stabilises.
In other words, this implies the existence of a $t_M > s$ such that $U^M_{t_M} = U^M_t$ for all $s < t <t_M$.

Given that $U^M_{t_M}$ is a subobject of $M$, there is a monomorphism $\alpha: U^M_{t_M} \rt M$.
Then we obtain an $n$-exact sequence in $\M$
\begin{equation}
  0 \xrightarrow{} U^M_{t_M} \xrightarrow{\alpha} M \xrightarrow{} V^1 \xrightarrow{ v^1 } \cdots \xrightarrow{ v^{ n-1 } } V^n \xrightarrow{} 0
\end{equation}
by taking the $n$-cokernel of $\alpha$.
We claim that
\begin{equation}
    \label{eq:exactlimits}
  0 \xrightarrow{} \M(X,V^1) \xrightarrow{ } \M(X,V^2) \xrightarrow{  } \cdots \xrightarrow{  } \M(X,V^n) \xrightarrow{} 0.
\end{equation}
is exact for all $X \in \bigcup_{r> s} \U_r$.
Indeed, for each $X \in \bigcup_{r> s} \U_r$ there exists a real number $t_1 \in (s, 1]$ such that $X \in \U_{t_1}$.
If $t_M \leq t_1$, we have that $X \in \U^M_{t_M}$.
Then (\ref{eq:exactlimits}) is exact because $U^M_{t_M}$ is the torsion object of $M$ in $\U_{t_M}$.
Otherwise, let $s < t_1 < t_M$.
Then we have that $U_{t_1}^M\cong U^M_{t_M}$ by construction and (\ref{eq:exactlimits}) is exact for all $X \in \bigcup_{r> s} \U_r$.
Hence, we have shown that for each $M \in \M$ there exists a subobject $U_{>s}^M:=U^M_{t_M}$ in $\bigcup_{r> s} \U_r$ such that (\ref{eq:exactlimits}) is exact for all $X \in \bigcup_{r> s} \U_r$.
Thus $\bigcup_{r> s} \U_r$ is an $n$-torsion class.

To show that $\bigcap_{t< s} \U_t$ is an $n$-torsion class in $\M$ for all $s\in (0,1]$, we start by noting that for all $M\in \M$ there exists a descending chain of subobjects
$$0 \subset \dots \subset U^M_t \subset \dots \subset U^M_0=M$$
where $t\in [0,s)$.
Since $\CA$ is a length category we have that $\CA$ is Noetherian, which implies the existence of $t_M\in [0,s)$ such that $U^M_{t_M} = U^M_t$ for all $t_M < t < s$.
Let $\alpha: U^M_{t_M} \rt M$ be a monomomorphism and consider the exact sequence
\begin{equation}
    \label{eq:exactlimits2}
  0 \xrightarrow{} U^M_{t_M} \xrightarrow{\alpha} M \xrightarrow{} V^1 \xrightarrow{ v^1 } \cdots \xrightarrow{ v^{ n-1 } } V^n \xrightarrow{} 0
\end{equation}
in $\M$ that comes from taking the $n$-cokernel of $\alpha$.
Then the sequence
\begin{equation}
\label{eq:exactlimits3}
  0 \xrightarrow{} \M(X,V^1) \xrightarrow{ } \M(X,V^2) \xrightarrow{  } \cdots \xrightarrow{  } \M(X,V^n) \xrightarrow{} 0
\end{equation}
is exact for all $X \in \bigcap_{t< s} \U_t \subset \U_{t_M}$ because $U^M_{t_M}$ is the torsion object of $M$ with respect to $\bigcap_{t< s} \U_t$.
Hence for every $M\in \M$ there exists a subobject $U^M_{<s}:=U^M_{t_M}$ of $M$ such that $U^M_{<s}\in \bigcap_{t< s} \U_t$ and (\ref{eq:exactlimits3}) is exact for every $X \in \bigcap_{t< s} \U_t$.
In other words, $\bigcap_{t< s} \U_t$ is an $n$-torsion class.
\end{proof}

Given an object $M$ in $\M$,  $\delta$ a chain of $n$-torsion classes in $\M$ and $s \in (0,1)$ we denote by $U^M_{>s}$ and $U^M_{<s}$ the torsion object of $M$ with respect to $\bigcup_{r> s} \U_r$ and $\bigcap_{t< s} \U_t$, respectively.

The above results allow us to define the notion of slicing for chains of $n$-torsion classes which will enable us to show the existence of Harder-Narasimhan filtrations from chains of $n$-torsion classes.

\begin{definition}\label{def:n-slicing}
Let $\delta$ be a chain of $n$-torsion classes.
For every $t \in [0,1]$ we have the subcategory $S_t= \bigcap_{s< t} \U_s\setminus \bigcup_{r> t} \U_r$.
We define $\Q_t$ to be
$$\Q_t:= \{ n\text{-coker}(\alpha) : \text{where $\alpha: U^M_{>t} \rt M$ and $M \in S_t$ }
\}.$$
Moreover the \textit{slicing} $\Q_\delta$ of $\delta$ is the set $\Q_\delta:= \{ \Q_t : t\in [0,1] \}$.
\end{definition}

\begin{remark}
Note that for a given chain of $n$-torsion classes $\delta$,  $\Q_t$ might be empty for some $t\in [0,1]$.
\end{remark}

\begin{theorem}\label{thm:nHNfilt}
Let $\M$ be an $n$-abelian category and let $\delta$ be a chain of $n$-torsion classes in $\M$.
Then $\delta$ induces an $n$-Harder-Narasimhan filtration for every $M \in \M$.
That is a filtration
$$0 =M_0 \subsetneq M_1 \subsetneq \dots \subsetneq M_{r-1} \subsetneq M_r = M$$
such that there exists a finite ordered set $s_1 > s_2 > \dots > s_{r}$ such that $s_i\in [0,1]$ and the $n$-cokernel of the inclusion $M_{k-1} \rt M_k$ is in $\Q_{s_k}$ for every $1\leq k\leq r$.
Moreover this filtration is unique up-to-isomorphism.

In particular, we have that $M_{i-1}$ is the torsion subobject of $M_{i}$ with respect to $ \bigcup_{r > s_{i}} \U_r$ for all $1 \leq i \leq r$.
\end{theorem}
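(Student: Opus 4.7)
The plan is to build the filtration recursively from the top, at each step peeling off the largest $n$-torsion subobject associated with a critical value of $s$. The two essential inputs are Proposition \ref{prop:limits}, which provides the $n$-torsion classes $\bigcap_{t<s}\U_t$ and $\bigcup_{r>s}\U_r$ needed to define the slices $S_s$, and the assumption that $\CA$ is a length category, which will be used to force termination.

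Assuming $M \neq 0$ (otherwise take $r = 0$), I set
\[
s_r := \sup\{s \in [0,1] : M \in \U_s\},
\]
which exists because $M \in \U_0 = \M$. By the definition of supremum together with the chain property of $\delta$, $M$ lies in $\U_t$ for every $t < s_r$, so $M \in \bigcap_{t<s_r}\U_t$; and if $M$ were in $\U_u$ for some $u > s_r$, this would contradict the definition of $s_r$, so $M \notin \bigcup_{u > s_r} \U_u$. Hence $M \in S_{s_r}$. I then define $M_{r-1} := U^M_{>s_r}$, the $n$-torsion subobject of $M$ with respect to the $n$-torsion class $\bigcup_{u > s_r} \U_u$ supplied by Proposition \ref{prop:limits}. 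The associated fundamental $n$-exact sequence lies in $\Q_{s_r}$ by Definition \ref{def:n-slicing}, and $M_{r-1} \subsetneq M$ because $M \notin \bigcup_{u > s_r} \U_u$. Iterating this construction on $M_{r-1}$ produces subobjects $M_{r-2}, M_{r-3}, \dots$ and parameters $s_{r-1}, s_{r-2}, \dots$ defined analogously; strict inequality $s_{k-1} > s_k$ holds since $M_{k-1} \in \bigcup_{u > s_k} \U_u$ means $M_{k-1} \in \U_{u_0}$ for some $u_0 > s_k$, forcing $s_{k-1} \geq u_0 > s_k$. Because $\CA$ is a length category, the strictly descending chain of subobjects of $M$ obtained this way terminates in finitely many steps; by construction it can only stop at $0$, yielding the required filtration with $M_{i-1} = U^{M_i}_{>s_i}$.

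Uniqueness is by induction on $r$. Given a second such filtration $0 = M'_0 \subsetneq \cdots \subsetneq M'_{r'} = M$ with $s'_1 > \cdots > s'_{r'}$, the condition that the top cokernel lies in $\Q_{s'_{r'}}$ forces $M \in S_{s'_{r'}}$ and $M'_{r'-1} = U^M_{>s'_{r'}}$. The two defining conditions $M \in \bigcap_{s < s'_{r'}}\U_s$ and $M \notin \bigcup_{u > s'_{r'}}\U_u$ pin down $s'_{r'}$ as precisely the supremum $\sup\{s : M \in \U_s\} = s_r$, whence $M'_{r'-1} = M_{r-1}$; the inductive hypothesis applied to $M_{r-1}$ then completes the argument.

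The main obstacle I anticipate is the sup argument itself: verifying that the chosen $s_r$ really does place $M$ in the slice $S_{s_r}$, and not in some adjacent limit torsion class, requires careful interplay between the chain property of $\delta$ and the definition of supremum, reminiscent of the analogous step in the classical abelian case \cite[Theorem 2.9]{T-HN-filt}. Once this point is secured, termination and uniqueness follow fairly mechanically from the length hypothesis on $\CA$ and the universal property of the $n$-torsion subobject.
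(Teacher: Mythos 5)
Your proposal matches the paper's proof in all essentials: you define $s_r$ as the supremum of $\{s : M \in \U_s\}$, show this places $M$ in the slice $S_{s_r}$, peel off the torsion subobject with respect to $\bigcup_{u>s_r}\U_u$ (via Proposition \ref{prop:limits}), iterate, use the length-category hypothesis to terminate, and obtain uniqueness from the uniqueness of $s_r$ together with the uniqueness of torsion subobjects. The only cosmetic differences are that the paper names $s_r$ as both an infimum and a supremum, and your termination argument makes the strict descent $s_{k-1} > s_k$ slightly more explicit; these do not change the substance of the argument.
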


\begin{proof}
Given $M \in \M$, we begin by showing the existence of a filtration with the desired properties.
For this we need to show the existence of $s_r\in[0,1]$ such that $M \in \bigcap_{s< s_r} \U_s\setminus~\bigcup_{t> s_r}~\U_{t}$.
Clearly $M \in \T_{0}=\M$ and $M \not\in \T_1=\{0\}$.
Moreover either $M \in \U_s$ or $M\not \in \U_s$ for all $s\in [0,1]$.
Hence
$$s_r= \inf \{ t\in [0,1] : M\not \in \U_t\} = \sup \{ s\in [0,1] : M \in \U_s\}$$
is well-defined and uniquely determined by $M$.
Now, consider the $n$-torsion subobject $U^M_{>s_{r}}$ of $M$ with respect to the $n$-torsion class $\bigcup_{t > s_{r}} \U_t$.
Note that $U^M_{>s_{r}}$ is a proper subobject of $M$ since $M \not\in \bigcup_{t > s_{r}} \U_t$.
Hence the $n$-cokernel of the natural inclusion $U^M_{>s_{t}} \rt M$ is in $\Q_{s_r}$.

Set $M_r :=M$ and  $M_{r-1} := U^M_{>s_{r}}$.
Applying the above  argument to $M_{r-1}$, there is a unique $s_{r-1}\in[0,1]$ such that $M_{r-1} \in \bigcap_{s< s_{r-1}} \U_s\setminus \bigcup_{t> s_{r-1}} \U_{t}$.
Moreover,
$$s_{r-1}= \inf \{ t\in [0,1] : M_{r-1}\not \in \U_t\} = \sup \{ s\in [0,1] : M_{r-1}\in \U_s\}.$$
Note that $M_{r-1} \in \bigcup_{t > s_{r}} \U_t \subset \U_{s_r}$.
In particular, this implies that $s_{r-1}> s_r$.

Applying this process inductively, we get an ascending sequence $s_{r} < s_{r-1} < \dots $ corresponding to a descending chain of subobjects of $M$
$$\dots \subset M_{r-i} \subset M_{r-i+1} \subset \dots \subset M_r=M.$$
Recall that, by Theorem~\ref{thm:embedding} $\M$ is a full subcategory of an abelian category $\CA$, which is assumed to be a length category.
Hence $M$ is of finite length in $\CA$.
Thus there is a positive integer $k$ that $M_{r-k}=0$.
Without loss of generality we can suppose that $k=r$.
This shows the existence of  a filtration with the desired properties.

We now show the uniqueness of this filtration.
Suppose that there exists a second filtration
$$0 =M'_0 \subsetneq M'_1 \subsetneq \dots \subsetneq M'_{t-1} \subsetneq M'_t = M$$
as in the statement of the theorem.
Then $M'_{t-1}$ is the torsion object $U^M_{>s_t}$ of $M$ with respect to the $n$-torsion class  $\bigcup_{x > s_t} \U_x$ for some $s_t$ in $[0,1]$ such that $M \in \bigcap_{s< s_t} \U_s $ $\setminus~\bigcup_{t> s_t}~\U_{t}$.
However, we have shown that there exists a unique $s_t \in [0,1]$ such that~$M \in \bigcap_{s< s_t} \U_s\setminus~\bigcup_{t> s_t}~\U_{t}$.
This implies that $s_t=s_r$.
Moreover, we have that $M_{r-1} \cong M'_{t-1}$ since the torsion objects are unique up to isomorphism.

Repeating this process we show that $s_{t-i}=s_{r-i}$ and that $M_{r-i} \cong M'_{t-i}$ for all positive integer $i$.
In particular we have that $0 \neq M_1 \cong M'_{r-t+1}$ and $0=M_0 \cong M'_{r-t}$, implying that $r=t$ and the proof is complete.
\end{proof}

\begin{example}\label{ex:n-HN}
Consider once again the algebra $A$ with $2$-cluster tilting subcategory $\M$ as in Example~\ref{ex:running}.
Then
$$\delta=
\begin{cases}
\U_t = \M  & \text{ if $t \in [0, 1/3)$}\\
\U_t = \add\{\rep{3}\}  & \text{ if $t \in [1/3, 2/3)$}\\
\U_t = \add\{0\}  & \text{ if $t \in [2/3, 1]$}
\end{cases}
$$
is a chain of $2$-torsion classes in $\M$.
The following is a complete list of the $2$-Harder-Narasimhan filtrations induced by $\delta$ in the indecomposable objects of $\M$.

\begin{center}
\begin{tabular}{|| c  | c | c ||}
\hline
Object & $2$-Harder-Narasimhan filtration & $\{s_1, \dots, s_m\}$\\
\hline
$\Rep{1}$	& $0 \subset \Rep{1}$   & $s_1=1/3$\\
\hline
$\Rep{1\\2}$	& $0 \subset \Rep{1\\2}$& $s_1=1/3$\\
\hline
$\Rep{2\\3}$	& $0 \subset \Rep{3} \subset \Rep{2\\3}$ & $s_1 = 2/3$,  $s_2=1/3$\\
\hline
$\Rep{3}$	& $0 \subset \Rep{3}$ & $s_1 = 2/3$ \\
\hline
\end{tabular}
\end{center}
\end{example}

\section{Embedding of $n$-Harder-Narasimhan filtrations}\label{sec:nHNvsHN}

In Section \ref{sec:ntorsvstors} we have shown that the map $T(-): n\text{-tors}(\M) \rt \text{tors}(\CA)$ embeds the poset of $n$-torsion classes $n\text{-tors}(\M)$ in an $n$-cluster tilting subcategory $\M$ of an abelian category $\CA$ into the poset tors$(\CA)$ of torsion classes in $\CA$.
This implies, in particular, that every chain of $n$-torsion classes $\delta$ in $\M$ induces naturally a chain of torsion classes $T(\delta)$ in $\CA$ by setting
$$T(\delta) := \{T(\U_s) : \U_s\in\delta \text{ for all $s \in [0,1]$}\}. $$

In order to construct ($n$-)Harder-Narasimhan filtrations and show that they are unique, we use infinite unions and intersections of ($n$-)torsion classes.
We now show that the map $T(-): n\text{-tors}(\M) \rt \text{tors}(\CA)$ commutes with infinite unions and intersections.

\begin{proposition}\label{prop:Tcommutes}
Let $\delta = \{ \U_s : s\in[0,1] \}$ be a chain of $n$-torsion classes in $\M$.
Then
$$T\left( \bigcup\limits_{r > s} \U_r\right)=  \bigcup\limits_{r > s} T(\U_r) \quad \text{ and } \quad T\left( \bigcap\limits_{r < s} \U_r\right)=  \bigcap\limits_{r<s} T(\U_r)$$
for all $s \in [0,1]$.
Moreover, if $F(\U)$ is the torsion free class in $\CA$ such that $(T(\U), F(\U))$ is a torsion pair in $\CA$, then
$$\left(\bigcup\limits_{r > s} T(\U_r) , \bigcap\limits_{r > s} F(\U_r)  \right) \text{ and } \left(\bigcap\limits_{r < s} T(\U_r) , \bigcup\limits_{r < s} F(\U_r)  \right)$$
are torsion pairs in $\CA$ for all $s \in [0,1]$.
\end{proposition}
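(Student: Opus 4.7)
My plan is to establish the moreover statement (the torsion pairs) first, since it carries the technical content, and then to deduce the two equalities of torsion classes from it. For $(\bigcup_{r>s}T(\U_r),\bigcap_{r>s}F(\U_r))$, Hom-vanishing is immediate from applying the torsion pair condition for any single $(T(\U_{r_0}),F(\U_{r_0}))$. The substance is the torsion decomposition of an arbitrary $M\in\CA$. Writing $t_r$ for the torsion functor of $T(\U_r)$, the family $\{t_rM\}_{r>s}$ forms an ascending chain of subobjects of $M$ as $r$ decreases toward $s$ (because $T(\U_r)$ grows as $r$ decreases); since $\CA$ is a length category, this chain stabilizes at some $r_0>s$. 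Setting $\tau M:=t_{r_0}M$, one has $\tau M\in T(\U_{r_0})\subseteq\bigcup_{r>s}T(\U_r)$, and a case split between $r\leq r_0$ (where $M/\tau M=f_rM\in F(\U_r)$) and $r>r_0$ (using $F(\U_{r_0})\subseteq F(\U_r)$) shows $M/\tau M\in\bigcap_{r>s}F(\U_r)$. The symmetric argument on the descending chain $\{t_rM\}_{r<s}$ as $r$ increases toward $s$ yields the pair $(\bigcap_{r<s}T(\U_r),\bigcup_{r<s}F(\U_r))$.

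Granted these torsion pairs, the union equality $T(\bigcup_{r>s}\U_r)=\bigcup_{r>s}T(\U_r)$ is formal: the right side is now known to be a torsion class containing $\bigcup\U_r$, hence containing the minimal such, $T(\bigcup\U_r)$; conversely, $T(\bigcup\U_r)$ is a torsion class containing each $\U_r$, hence each $T(\U_r)$, hence their union. The inclusion $T(\bigcap_{r<s}\U_r)\subseteq\bigcap_{r<s}T(\U_r)$ of the intersection equality is also immediate. The reverse inclusion $\bigcap_{r<s}T(\U_r)\subseteq T(\CU)$, where $\CU:=\bigcap_{r<s}\U_r$, is the main obstacle.

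For this reverse inclusion I would pass to the torsion-free sides: it is equivalent to $F(\CU)\subseteq\bigcup_{r<s}F(\U_r)$, the opposite inclusion being clear from $\U_r^{\perp}\subseteq\CU^{\perp}$. Given $Y\in F(\CU)$, apply the torsion-pair decomposition from the first step: $0\to\tau Y\to Y\to Y/\tau Y\to 0$ with $\tau Y\in\bigcap_{r<s}T(\U_r)$ and $Y/\tau Y\in\bigcup_{r<s}F(\U_r)$. Since $F(\CU)$ is closed under subobjects, $\tau Y\in F(\CU)$, so the problem reduces to proving the vanishing $\bigcap_{r<s}T(\U_r)\cap F(\CU)=0$; granted this, $\tau Y=0$ and $Y=Y/\tau Y\in\bigcup_{r<s}F(\U_r)$.

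The hardest step is this final vanishing. For a hypothetical nonzero $Z\in\bigcap_{r<s}T(\U_r)\cap F(\CU)$, I would use the cogenerating property of $\M$ to embed $Z\hookrightarrow M$ into some $M\in\M$. Since $Z\in T(\U_r)$ for every $r<s$, and by Lemma~\ref{lem:sametors} the maximal $T(\U_r)$-subobject of $M$ equals the $n$-torsion subobject $U^M_r\in\U_r$, the inclusion factors as $Z\subseteq U^M_r$ for every $r<s$. The stabilization from the proof of Proposition~\ref{prop:limits} then gives $Z\subseteq\bigcap_{r<s}U^M_r=U^M_{<s}$, which lies in $\CU\subseteq T(\CU)$ by the same proposition. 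Examining the short exact sequence $0\to Z\to U^M_{<s}\to U^M_{<s}/Z\to 0$, in which $Z\in F(\CU)$ while $U^M_{<s}/Z\in T(\CU)$ (as a quotient of a $\CU$-object), and combining the orthogonality $\Hom_{\CA}(T(\CU),F(\CU))=0$ with the Ext-vanishing provided by Lemma~\ref{lem:iii} for the $n$-torsion class $\CU$ inside the $n$-cluster-tilting framework, one should force $Z=0$. I expect this last step, which bridges the subobject datum for $Z$ with the orthogonality datum $Z\in F(\CU)$ via the $n$-cluster-tilting structure of $\M$, to be the most delicate point of the argument.
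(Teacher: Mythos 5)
Your proposal diverges substantially from the paper's proof and, in the one place where you go beyond what the paper actually writes down, it has a genuine gap.

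The paper's proof is much shorter than yours: it proves only the union equality $T\bigl(\bigcup_{r>s}\U_r\bigr)=\bigcup_{r>s}T(\U_r)$ by a direct double inclusion (one inclusion uses that the increasing union $\bigcup_{r>s}T(\U_r)$ is already a torsion class; the other uses $T(-)=\Filt(\Fac(-))$), and then dispatches the ``moreover'' part by citing Proposition 2.7 of \cite{T-HN-filt}. The intersection equality is not explicitly argued in the paper at all. Your strategy of establishing the two torsion pairs first by the stabilization-of-torsion-objects argument (which is exactly the content of the cited proposition) and then deducing the union equality formally from them is perfectly sound and in the same spirit, just reorganized.

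Where your proposal breaks is precisely the step you flag as ``the most delicate point''. You reduce $\bigcap_{r<s}T(\U_r)\subseteq T(\CU)$, with $\CU=\bigcap_{r<s}\U_r$, to the vanishing $\bigcap_{r<s}T(\U_r)\cap F(\CU)=0$, embed a hypothetical nonzero $Z$ into $M\in\M$, and correctly obtain $Z\subseteq U^M_{<s}\in\CU\subseteq T(\CU)$. But from here, the orthogonality $\Hom_\CA(T(\CU),F(\CU))=0$ gives only $\Hom_\CA(U^M_{<s},Z)=0$, and the inclusion $Z\hookrightarrow U^M_{<s}$ is a map \emph{into} $U^M_{<s}$, not out of it, so no contradiction results; a torsion object can certainly have a nonzero torsion-free subobject. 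Likewise the $\Ext^{n-1}$-vanishing of Lemma~\ref{lem:iii} applies to pairs in $t\M\times f\M$, and your $Z$ is not known to lie in $f\M$ (it need not even lie in $\M$), so that lemma does not apply to it. Note that if you could place $Z$ in $\M$, the argument would close immediately via Theorem~\ref{thm:torsfromntors}: one would get $Z\in T(\U_r)\cap\M=\U_r$ for all $r<s$, hence $Z\in\CU\cap F(\CU)=0$. The missing bridge is exactly how to pass from an arbitrary $Z\in\CA$ to that situation, and your sketch does not supply it.
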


\begin{proof}
We prove that $T\left( \bigcup\limits_{r > s} \U_r\right)=  \bigcup\limits_{r > s} T(\U_r)$.

Clearly $\U_r \subset T(\U_r)$ for all $r > s$.
Then $\bigcup\limits_{r > s} \U_r \subset \bigcup\limits_{r > s} T(\U_r)$.
We can then apply $T(-)$ to both sets to obtain $T\left(\bigcup\limits_{r > s} \U_r\right) \subset T\left( \bigcup\limits_{r > s} T(\U_r) \right)$.
Now, we have that $\bigcup\limits_{r > s} T(\U_r)$ is a torsion class by \cite[Proposition 2.3]{T-HN-filt}.
So, $T\left(\bigcup\limits_{r > s} \U_r\right) \subset T\left( \bigcup\limits_{r > s} T(\U_r) \right) =  \bigcup\limits_{r > s} T(\U_r)$.

In the other direction, recall that $T(X) = \Filt (\Fac(X))$ (cf. \cite{DIJ}).
Then we have the following inclusions.
\begin{align*}
    \U_r &\subset \bigcup\limits_{r > s} \U_r & & \text{ for all $r > s$} \\
    \Filt(\Fac(\U_r)) &\subset \Filt\left(\Fac\left(\bigcup\limits_{r > s} \U_r \right)\right) & & \text{ for all $r > s$} \\
    T(\U_r) &\subset T\left(\bigcup\limits_{r > s} \U_r \right) & & \text{ for all $r > s$} \\
    \bigcup_{r > s} T(\U_r) &\subset T\left(\bigcup\limits_{r > s} \U_r \right) &
\end{align*}
The moreover part of the statement follows directly from \cite[Proposition\;2.7]{T-HN-filt}.
\end{proof}

By now, we have seen that for every non-zero object $M \in \M \subset \CA$ we have the $n$-Harder-Narasimhan filtration induced by $\delta$ given by Theorem \ref{thm:nHNfilt} and the Harder-Narasimhan filtration induced by $T(\delta)$ given by Theorem \ref{thm:HN-filt}.
We show that both filtrations coincide.

\begin{theorem}\label{thm:comparison}
Let $\delta$ be a chain of $n$-torsion classes in $\M$, $T(\delta)$ be the chain of torsion classes in $\CA$ induced by $\delta$ and $M$ be an object in $\M$.
Consider the $n$-Harder Narasimhan filtration
$$0 =M_0 \subsetneq M_1 \subsetneq \dots M_{t-1} \subsetneq M_t = M$$
of $M$ induced by $\delta$ and the Harder-Narasimhan filtration
$$0 =N_0 \subsetneq N_1 \subsetneq \dots M_{t'-1} \subsetneq M_{t'} = M$$
of $M$ induced by $T(\delta)$.
Then we have that $t=t'$ and $M_i \cong N_i$ for all $1 \leq i \leq t$.
\end{theorem}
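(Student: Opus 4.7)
The plan is to show that the filtration $0 = M_0 \subsetneq M_1 \subsetneq \dots \subsetneq M_t = M$ produced by Theorem~\ref{thm:nHNfilt} already satisfies the defining properties of a Harder-Narasimhan filtration of $M$ in $\CA$ with respect to the chain $T(\delta)$, with the very same critical values $s_1 > s_2 > \dots > s_t$. Once this is verified, the uniqueness clause of Theorem~\ref{thm:HN-filt} immediately forces $t = t'$ and $M_i \cong N_i$ for every $i$.

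Concretely, I would fix $k \in \{1,\dots,t\}$ and show that $M_k/M_{k-1}$ lies in the slice $\CP_{s_k}$ of $T(\delta)$, that is, in
\[ \bigcap_{s < s_k} T(\U_s) \;\cap\; \bigcap_{s > s_k} F(\U_s), \]
where $F(\U_s)$ is the torsion-free class paired with $T(\U_s)$. The inclusion $M_k \in \bigcap_{s < s_k} \U_s$ is exactly the defining property of $s_k$ extracted in the proof of Theorem~\ref{thm:nHNfilt}; since $\U_s \subseteq T(\U_s)$ and every torsion class in $\CA$ is closed under quotients, this already yields $M_k / M_{k-1} \in \bigcap_{s < s_k} T(\U_s)$.

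The more delicate inclusion, $M_k / M_{k-1} \in \bigcap_{s > s_k} F(\U_s)$, is where the bridge between the two settings is used. By the final clause of Theorem~\ref{thm:nHNfilt}, $M_{k-1}$ is the $n$-torsion subobject of $M_k$ with respect to $\bigcup_{r > s_k} \U_r$, which is a genuine $n$-torsion class in $\M$ thanks to Proposition~\ref{prop:limits}. Applying Lemma~\ref{lem:sametors} to this $n$-torsion class and the object $M_k \in \M$ identifies $M_{k-1}$ with the classical torsion subobject of $M_k$ in $\CA$ with respect to $T\bigl(\bigcup_{r > s_k} \U_r\bigr)$, and Proposition~\ref{prop:Tcommutes} rewrites this latter torsion class as $\bigcup_{r > s_k} T(\U_r)$. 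Thus $M_k/M_{k-1}$ is the torsion-free quotient of $M_k$ with respect to the torsion pair $\bigl(\bigcup_{r > s_k} T(\U_r), \bigcap_{r > s_k} F(\U_r)\bigr)$ supplied by Proposition~\ref{prop:Tcommutes}, and consequently lies in $\bigcap_{s > s_k} F(\U_s)$, as required.

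With both inclusions established and the sequence $s_1 > \dots > s_t$ strictly decreasing by Theorem~\ref{thm:nHNfilt}, the filtration $(M_i)_{i=0}^{t}$ is a Harder-Narasimhan filtration of $M$ in $\CA$ with respect to $T(\delta)$, and the uniqueness part of Theorem~\ref{thm:HN-filt} concludes the proof. The technical heart of the argument is precisely the compatibility of $T(-)$ with the unions of torsion classes that encode the ``next critical step'' of each filtration, and I expect the only potential obstacle to be checking that Lemma~\ref{lem:sametors} really applies to the limit $n$-torsion class $\bigcup_{r > s_k} \U_r$; this is ensured by Proposition~\ref{prop:limits}, which guarantees that this union is itself an $n$-torsion class so that the hypothesis of Lemma~\ref{lem:sametors} is met.
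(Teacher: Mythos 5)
Your argument is correct and mirrors the paper's own proof almost step for step: both verify that $M_k/M_{k-1}$ lies in the slice $\CP_{s_k}$ by handling the torsion side via closure under quotients and the torsion-free side via Lemma~\ref{lem:sametors} combined with Proposition~\ref{prop:Tcommutes}, then invoke the uniqueness clause of Theorem~\ref{thm:HN-filt}. The only cosmetic difference is the order in which you treat the two inclusions, and you explicitly flag the role of Proposition~\ref{prop:limits} in legitimizing the use of Lemma~\ref{lem:sametors} on the union $\bigcup_{r>s_k}\U_r$, a point the paper leaves implicit inside Theorem~\ref{thm:nHNfilt}.
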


\begin{proof}
Let $M$ be an object of $\M$ and $\delta$ a chain of $n$-torsion classes.
Consider the $n$-Harder Narasimhan filtration
$$0 =M_0 \subsetneq M_1 \subsetneq \dots M_{t-1} \subsetneq M_t = M$$
of $M$ induced by $\delta$.
By Theorem~\ref{thm:nHNfilt}, we have that $M_{i-1}$ is the $n$-torsion subobject of $M_i$ with respect to the $n$-torsion class $\bigcup\limits_{r > s_{i}} \U_r$.
Applying Lemma~\ref{lem:sametors} and Proposition~\ref{prop:Tcommutes} we obtain that $M_{i-1}$ is the torsion object of $M_i$ with respect of $\bigcup\limits_{r > s_{i}} T(\U_r)$.
In other words,
$$0 \lrt M_{i-1} \stackrel{\alpha}{\lrt} M_i \lrt \Coker (\alpha) \lrt 0$$
is the canonical short exact sequence of $M_i$ with respect to the torsion pair
$$\left(\bigcup\limits_{r > s_{i}} T(\U_r) , \bigcap\limits_{r > s_{i}} F(\U_r)  \right),$$
where $F(\U_r)$ is the torsion free class in $\CA$ such that $(T(\U_r), F(\U_r))$ is a torsion pair (see \cite[Proposition 2.7]{T-HN-filt}).
Thus $\Coker (\alpha)$ belongs to $\bigcap\limits_{r > s_{i}} F(\U_r)$.

On the other hand, it follows from Theorem~\ref{thm:nHNfilt} that $M_i \in \U_s$ for all $s < s_i$.
Then $M_i \in T(\U_s)$ for all $s < s_i$.
Hence $M_i \in \bigcap\limits_{s<s_i} T(\U_s)$.
So, $\Coker(\alpha)$ belongs to $\bigcap\limits_{s<s_i} T(\U_s)$, since $\bigcap\limits_{s<s_i} T(\U_s)$ is a torsion class by \cite[Proposition 2.7]{T-HN-filt}.

We can then conclude that
$\Coker(\alpha)\in \CP_{s_i} = \bigcap\limits_{s<s_i} \T(\U_s)  \cap \bigcap\limits_{s>s_i} F(\U_s)$
for all $1 \leq i \leq t$.

This means that
$$0 =M_0 \subsetneq M_1 \subsetneq \dots M_{t-1} \subsetneq M_t = M$$
is a filtration of $M$ such that:
\begin{enumerate}
\item $0 = M_0$ and $M_n=M$;
\item there exists $s_k \in [0,1]$ such that $M_k/M_{k-1} \in \CP_{s_k},$ for all $1\leq k \leq t$;
\item $s_1 > s_2 > \dots > s_t$.
\end{enumerate}
Then this is the Harder-Narasimhan filtration of $M$ with respect to the chain of torsion classes $T(\delta)$.
\end{proof}

As a consequence of the last theorem, we have the following.

\begin{corollary}
Let $\delta$ be a chain of $n$-torsion classes of $\M \subset \CA$ and consider the family of subcategories $\{\CP_s : s\in[0,1]\}$ be the slicing associated to $T(\delta)$ as defined in Definition \ref{def:chain}.
If there exists a non-zero object in $\M$ then there exists a $s_1 \in [0,1]$ such that $\CP_{s_1} \cap \M$ contains a non-zero object.
\end{corollary}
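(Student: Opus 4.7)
The plan is to extract the required non-zero object from the bottom piece of an $n$-Harder-Narasimhan filtration. Start with any non-zero object $M \in \M$, which exists by hypothesis, and apply Theorem~\ref{thm:nHNfilt} to $M$ with respect to the chain $\delta$. This produces a filtration
$$0 = M_0 \subsetneq M_1 \subsetneq \cdots \subsetneq M_{r-1} \subsetneq M_r = M$$
together with an ordered sequence $s_1 > s_2 > \cdots > s_r$ in $[0,1]$. The construction in the proof of Theorem~\ref{thm:nHNfilt} builds each $M_i$ iteratively as the $n$-torsion subobject of $M_{i+1}$ with respect to some $n$-torsion class of the form $\bigcup_{r > s_{i+1}} \U_r$; since every $n$-torsion class is by definition a full subcategory of $\M$, it follows that each $M_i$ lies in $\M$. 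In particular $M_1 \in \M$, and $M_1 \neq 0$ because $M_0 \subsetneq M_1$.

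Next, invoke Theorem~\ref{thm:comparison}, which identifies the filtration above with the Harder-Narasimhan filtration of $M$ in $\CA$ induced by the chain of torsion classes $T(\delta)$. Under this identification, the quotients satisfy $M_k/M_{k-1} \in \CP_{s_k}$ for the slicing $\{\CP_s : s \in [0,1]\}$ of $T(\delta)$ (Definition~\ref{def:chain}). Taking $k=1$ gives $M_1 = M_1/M_0 \in \CP_{s_1}$.

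Combining the two observations, $M_1$ is a non-zero object belonging simultaneously to $\CP_{s_1}$ and to $\M$, so $\CP_{s_1} \cap \M$ contains a non-zero object, as required. There is essentially no obstacle here: the statement is a direct bookkeeping consequence of Theorems~\ref{thm:nHNfilt} and~\ref{thm:comparison}, and the only point worth flagging is the verification that the subobjects in the $n$-Harder-Narasimhan filtration genuinely live in $\M$, which is immediate from the way they are produced.
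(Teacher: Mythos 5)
Your proof is correct and follows essentially the same route as the paper: apply Theorem~\ref{thm:nHNfilt} to a non-zero $M \in \M$ to get the $n$-Harder-Narasimhan filtration, use Theorem~\ref{thm:comparison} to identify the quotients with objects of the slicing $\{\CP_s\}$ of $T(\delta)$, and observe that $M_1 = M_1/M_0 \in \CP_{s_1} \cap \M$ is non-zero. The extra care you take to verify that $M_1 \in \M$ is left implicit in the paper but is a reasonable point to flag.
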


\begin{proof}
Let $\delta$ be a chain of $n$-torsion classes in $\M$ and $M$ be a non-zero object of $\M$.
Then, Theorem \ref{thm:nHNfilt} gives us the $n$-Harder-Narasimhan filtration of $M$
$$0 =M_0 \subsetneq M_1 \subsetneq \dots M_{t-1} \subsetneq M_t = M.$$
Moreover, Theorem \ref{thm:comparison} implies that $M_i / M_{i-1} \in \CP_{s_i}$ for all $1 \leq i \leq t$.
In particular, $M_1/M_0 \cong M_1 \in \CP_{s_1}$.
\end{proof}

\begin{example}
To finish this section, let $A$ and $M$ as in Example~\ref{ex:running} and let
$$\delta=
\begin{cases}
\U_t = \M  & \text{ if $t \in [0, 1/3)$}\\
\U_t = \add\left\{\rep{1}\oplus\rep{1\\2}\oplus\rep{2\\3}\right\}  & \text{ if $t \in [1/3, 2/3)$}\\
\U_t = \add\{0\}  & \text{ if $t \in [2/3, 1]$}
\end{cases}
$$
be a chain of $2$-torsion classes in $\M$.
Is easy to see that
$$T(\delta)=
\begin{cases}
T(\U_t)= \mmod A & \text{ if $t \in [0, 1/3)$}\\
T(\U_t) = \add\left\{\rep{1}\oplus\rep{1\\2}\oplus\rep{2\\3}\oplus \rep{2}\right\}  & \text{ if $t \in [1/3, 2/3)$}\\
T(\U_t) = \add\{0\}  & \text{ if $t \in [2/3, 1]$}
\end{cases}
$$
is the chain of torsion classes $T(\delta)$ induced by $\delta$ in $\mmod A$.
\end{example}

\section{Galois coverings of $n$-torsion classes}

Assume that $\C$ is a locally bounded Krull-Schmidt $\k$-category, where $\k$ is a field, with an admissible $G$-action on $\C$ inducing an admissible action on $\mmod \C$.
Then, as in Subsection~\ref{subsec:Galoiscoverings}, the functor $P: \C \rt \C/G$ induces a functor $\Pc: \mmod \C \lrt \mmod (\C/G)$ which is a $G$-precovering map.
Moreover, we know by Theorem~\ref{thm:DarpoIyama} that, under these assumptions, a $G$-equivariant subcategory $\M$ of $\mmod \C$ such that $\Pc(\M)$ is functorially finite in $\mmod(\C/G)$ is an $n$-cluster-tilting subcategory of $\mmod\C$ if and only if $\Pc(\M)$ is an $n$-cluster-tilting subcategory of $\mmod \C/G$. For more details see Subsection~\ref{subsec:Galoiscoverings}.

We start this section by showing that $G$-equivariant $n$-torsion classes behave well under the push-down functor $\Pc: \mmod \C \lrt \mmod (\C/G)$.

\begin{theorem}\label{thm:cov-main}
Let  $\C$ be a locally bounded Krull-Schmidt $\k$-category with an admissible action of a group $G$ on $\C$ inducing an admissible action on $\mmod \C$.
Suppose that $\M$ is an $n$-cluster-tilting  $G$-equivariant full subcategory of $\mmod \C$ such that $P_{\bullet}(\M)$ is functorially finite in $\mmod\C/G$. Let $\U$ be a $G$-equivariant full subcategory of $\M$. If $\U$ is an $n$-torsion class of $\M$ then $P_{\bullet}(\U)$ is an $n$-torsion class of $P_{\bullet}(\M)$.
\end{theorem}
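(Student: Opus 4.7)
The plan is to verify the two requirements of Definition~\ref{def:ntorsionclass} for $P_\bullet(\U) \subseteq P_\bullet(\M)$ by transporting the fundamental $n$-exact sequences from $\M$ via $P_\bullet$ and then using the $G$-precovering isomorphism to check exactness of the $\Hom$ complexes in $P_\bullet(\M)$.

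First, by Theorem~\ref{thm:DarpoIyama}, the hypotheses guarantee that $P_\bullet(\M)$ is an $n$-cluster-tilting subcategory of $\mmod(\C/G)$, so it is $n$-abelian and its $n$-exact sequences are precisely the long exact sequences in $\mmod(\C/G)$ all of whose terms lie in $P_\bullet(\M)$. Pick an arbitrary object of $P_\bullet(\M)$, which by construction has the form $P_\bullet(M)$ for some $M \in \M$, and take the fundamental $n$-exact sequence
\[ 0 \longrightarrow U^M \longrightarrow M \longrightarrow V^1 \xrightarrow{v^1} \cdots \xrightarrow{v^{n-1}} V^n \longrightarrow 0 \]
of $M$ with respect to $\U$. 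Applying the exact push-down functor $P_\bullet$ yields a long exact sequence in $\mmod(\C/G)$ whose terms all lie in $P_\bullet(\M)$, hence an $n$-exact sequence in $P_\bullet(\M)$ with leftmost nonzero term $P_\bullet(U^M) \in P_\bullet(\U)$.

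Next I would verify the $\Hom$-exactness condition. Fix $U' = P_\bullet(U) \in P_\bullet(\U)$. Using the $G$-precovering isomorphism from Definition~\ref{k-iso}, for each $i$ there is a natural isomorphism
\[ \Hom_{\mmod(\C/G)}\bigl(P_\bullet(U),P_\bullet(V^i)\bigr) \;\cong\; \bigoplus_{g\in G} \Hom_{\mmod\C}\bigl(U,{}^g V^i\bigr), \]
and these assemble into an isomorphism of complexes intertwining the maps induced by the $P_\bullet(v^i)$ with the direct sum over $g$ of the maps induced by the ${}^g v^i$. Now for each $g \in G$, applying the exact automorphism ${}^g(-)$ to the fundamental $n$-exact sequence of $M$ gives
\[ 0 \longrightarrow {}^g U^M \longrightarrow {}^g M \longrightarrow {}^g V^1 \xrightarrow{{}^g v^1} \cdots \xrightarrow{{}^g v^{n-1}} {}^g V^n \longrightarrow 0, \]
which, by $G$-equivariance of $\U$ together with uniqueness of the $n$-torsion subobject, is the fundamental $n$-exact sequence of ${}^g M \in \M$ with respect to $\U$. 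Since $U \in \U$, applying $\Hom_\M(U,-)$ to this sequence yields an exact complex by Definition~\ref{def:ntorsionclass}. Taking the direct sum over $g$ preserves exactness, and transporting through the $G$-precovering isomorphism gives the required exactness of
\[ 0 \longrightarrow \Hom\bigl(P_\bullet(U),P_\bullet(V^1)\bigr) \longrightarrow \cdots \longrightarrow \Hom\bigl(P_\bullet(U),P_\bullet(V^n)\bigr) \longrightarrow 0. \]

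The main technical point that I expect to require the most care is the naturality/compatibility of the $G$-precovering isomorphism with the connecting morphisms of the complex, so that the decomposition $\bigoplus_g \Hom_\M(U,{}^g V^i)$ is not merely term-by-term but an isomorphism of complexes; once this is granted, exactness is inherited from the ${}^g$-twisted fundamental sequences. A secondary point is to confirm that exactness of $P_\bullet$ together with the $n$-cluster-tilting property of $P_\bullet(\M)$ really does promote the image of an $n$-exact sequence in $\M$ to an $n$-exact sequence in $P_\bullet(\M)$, which is immediate from the description of $n$-exact sequences recalled above.
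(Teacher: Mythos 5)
Your proof is correct and follows essentially the same strategy as the paper's: push down the fundamental $n$-exact sequence via the exact functor $P_\bullet$ (the paper cites \cite[Lemma 3.5]{Iyama2011a} for the promotion to an $n$-exact sequence in $P_\bullet(\M)$, which is what you invoke implicitly), then use a $G$-precovering isomorphism to identify the resulting $\Hom$-complex with a direct sum over $g \in G$ of $\Hom$-complexes in $\M$, each exact by $G$-equivariance of $\U$. The only cosmetic difference is that you use $P_\bullet^{(2)}$, which moves the twist onto the targets and so requires the extra (valid) observation that the ${}^g$-twisted sequence is the fundamental $n$-exact sequence of ${}^g M$; the paper uses $P_\bullet^{(1)}$, moving the twist onto the source so that one only needs ${}^g U \in \U$ and can apply the $n$-torsion condition to the original sequence directly.
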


\begin{proof}
First, note that by Theorem~\ref{thm:DarpoIyama}, $P_{\bullet}(\M)$ is an $n$-cluster-tilting subcategory of $\mmod\C/G$.
Let $\U$ be an $n$-torsion class of $\M$ and $P_{\bullet}(M)$ be an object in $P_{\bullet}(\M)$.
By definition, there is an $n$-exact sequence
\begin{equation}\label{seq}
 0 \lrt U^M \st{\theta} \lrt M \st{\varphi^1} \lrt V^1 \lrt \cdots \st{\varphi^n} \lrt V^n \lrt 0
\end{equation}
 in $\M$, where $U \in \U$ and
\begin{equation}\label{Seq-1}
 0 \rightarrow \M(U,V^1) \rightarrow \M(U,V^2) \rightarrow \cdots \rightarrow \M(U,V^n) \rightarrow 0
\end{equation}
is exact, for all objects $U$ in $\CU$.

By applying the exact functor $\Pc$ on the sequence (\ref{seq}), we get the following exact sequence
\begin{equation}\label{S-1}
 0 \lrt \Pc(U^M) \st{\Pc(\theta)} \lrt \Pc(M) \st{\Pc(\varphi^1)} \lrt \Pc(V^1) \lrt \cdots \st{\Pc(\varphi^n)} \lrt \Pc(V^n) \lrt 0.
 \end{equation}
in $\Pc(\M)$.
To show that $\Pc(\U)$ is an $n$-torsion class of $\Pc(\M)$, it is enough to show that this is the canonical $n$-exact sequence of $\Pc(M)$ with respect to $\Pc(\U)$.

Since the sequence (\ref{S-1}) is exact, we may deduce from \cite[Lemma 3.5]{Iyama2011a}, that it is an  $n$-exact sequence in $\Pc(\M)$.

So it is enough to show that the sequence
\begin{equation}\label{eq:s2}
0 \lrt \Pc(\M)(\Pc(U), \Pc(V^1)) \lrt   \cdots \lrt \Pc(\M)(\Pc(U), \Pc(V^n)) \lrt 0
\end{equation}
is exact, for all objects $\Pc(U)$ of $\Pc(\CU)$.

Since the push-down functor $\Pc: \mmod \C~\lrt~\mmod\C/G$ is $G$-precovering, there exists the following commutative diagram
{\footnotesize{
\[
\begin{tikzcd}
0 \ar{r} & (\Pc(U), \Pc(V^1)) \ar{r}{\Pc(\theta)_*} & (\Pc(U), \Pc(V^2)) \ar{r} & \cdots \ar{r}{\Pc(\varphi^n)_* } & (\Pc(U), \Pc(V^n))  \ar{r} & 0 \\
0 \ar{r} & \oplus_{g \in G} \M({}^g U , V^1) \ar{u}{\Pc^{(1)}_{U, V^1}} \ar{r}{\oplus \theta_*}  & \oplus_{g \in G} \M({}^g U , V^2) \ar{u}{\Pc^{(1)}_{U, V^2}} \ar{r} & \cdots  \ar{r}{\oplus  \varphi^n_*} & \oplus_{g\in G} \M({}^g U, V^n) \ar{u}{\Pc^{(1)}_{U, V^n}} \ar{r}  & 0,
\end{tikzcd}
\]}}
where the vertical maps are $\k$-isomorphisms, see Definition \ref{k-iso}.

Since $\U$ is $G$-equivariant, ${}^g U$ belongs to $\U$ for all $g \in G$. Now, since the sequence (\ref{Seq-1}) is exact, for all $U \in \U$, the bottom row of the above diagram is exact. This implies the exactness of the top row, as desired.

This completes the proof of the theorem.
\end{proof}

In Theorem~\ref{thm:torsfromntors}, given an $n$-cluster-tilting subcategory $\M$ of an abelian category $\CA$, we characterise the minimal torsion class $T(\U)$ of $\CA$ containing the $n$-torsion class $\U \subset \M$.
In the following corollary we compare the minimal torsion class $T(\Pc(\U))$ of $\mmod\C/G$ containing $\Pc(\U)$ with the torsion class $\Pc(T(\U))$.
Recall that for a subcategory $\X$ of an abelian category $\CA$, the minimal torsion class of $\CA$ containing $\X$ is denoted by $T(\X)$ and is equal to $\Filt(\Fac(\X))$.

\begin{corollary}\label{cor:commutPT}
Let the situation be as in the Theorem \ref{thm:cov-main}.
Let $\CU$ be an $n$-torsion class of $\M$ and suppose that $T(\U)$ is $G$-equivariant.
Then $\Pc(T(\U))=T(\Pc(\U))$.
\end{corollary}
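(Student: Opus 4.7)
The plan is to prove $\Pc(T(\U)) = T(\Pc(\U))$ by double inclusion, using exactness of $\Pc$ together with the $G$-equivariance of $T(\U)$.

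For the inclusion $\Pc(T(\U)) \subseteq T(\Pc(\U))$, I would use the description $T(\X) = \Filt(\Fac(\X))$ together with exactness of $\Pc$. Any $X \in T(\U)$ admits a finite filtration $0 = X_0 \subsetneq X_1 \subsetneq \cdots \subsetneq X_m = X$ whose successive subquotients $X_i/X_{i-1}$ are quotients of objects $U_i \in \U$. Applying the exact functor $\Pc$ produces a filtration of $\Pc(X)$ whose subquotients $\Pc(X_i/X_{i-1})$ are quotients of $\Pc(U_i) \in \Pc(\U)$, so that $\Pc(X) \in \Filt(\Fac(\Pc(\U))) = T(\Pc(\U))$.

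For the reverse inclusion $T(\Pc(\U)) \subseteq \Pc(T(\U))$, my strategy is to show that $\Pc(T(\U))$ is itself a torsion class in $\mmod(\C/G)$; then, since $\Pc(\U) \subseteq \Pc(T(\U))$, minimality of $T(\Pc(\U))$ forces the desired inclusion. To exhibit $\Pc(T(\U))$ as a torsion class, I would verify that the pair $(\Pc(T(\U)), \Pc(F(\U)))$ is a torsion pair in $\mmod(\C/G)$, where $F(\U)$ is the (automatically $G$-equivariant) torsion-free complement of $T(\U)$ in $\mmod\C$. The $\Hom$-vanishing condition $\Hom_{\mmod(\C/G)}(\Pc X, \Pc Y) = 0$ for $X \in T(\U)$ and $Y \in F(\U)$ follows from the $G$-precovering isomorphism $\Pc^{(1)}_{X,Y}\colon \bigoplus_{g \in G} \mmod\C(gX, Y) \to \mmod(\C/G)(\Pc X, \Pc Y)$: each summand vanishes because $gX \in T(\U)$ by $G$-equivariance of $T(\U)$, while $Y \in F(\U)$. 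The canonical short exact sequence for any $Z \in \mmod(\C/G)$ would come from lifting $Z$ to some $W \in \mmod\C$ with $\Pc(W) \cong Z$, and applying the exact functor $\Pc$ to the canonical SES $0 \to tW \to W \to fW \to 0$ of $W$ with respect to $(T(\U), F(\U))$.

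The main obstacle will be the object-lifting step needed to produce the canonical SES in $\mmod(\C/G)$: in the paper's setting, where $\C$ is locally bounded Krull--Schmidt with admissible $G$-action, one expects $\Pc$ to be dense (i.e.\ a $G$-covering rather than merely a $G$-precovering), but making this precise may require extra care. Should density not be directly available, I would instead verify closure of $\Pc(T(\U))$ under quotients and extensions more directly, using the $G$-precovering isomorphism above and its $\Ext^1$-analogue $\Ext^1_{\mmod(\C/G)}(\Pc Y, \Pc X) \cong \bigoplus_g \Ext^1_{\mmod\C}(Y, gX)$ — which follows from the adjunction $\Pc \dashv \Pc^\bullet$ together with $\Pc^\bullet \Pc X \cong \bigoplus_g gX$ — combined with the $G$-equivariance and the torsion-class closure properties of $T(\U)$ in $\mmod\C$.
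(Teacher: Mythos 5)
Your proposal is correct and in substance follows the paper's route. The forward inclusion $\Pc(T(\U)) \subseteq T(\Pc(\U))$ is handled identically: the paper also uses $T(\X) = \Filt(\Fac(\X))$ together with exactness of $\Pc$. For the reverse inclusion, the paper's argument is: from $\Pc(\U)\subseteq\Pc(T(\U))$ and monotonicity of $T(-)$ one gets $T(\Pc(\U))\subseteq T(\Pc(T(\U)))$, and then $T(\Pc(T(\U)))=\Pc(T(\U))$ because $\Pc(T(\U))$ is already a torsion class — which the paper obtains by invoking Theorem~\ref{thm:cov-main} at $n=1$ for the $G$-equivariant torsion class $T(\U)\subseteq\mmod\C$. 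You instead verify that $\Pc(T(\U))$ is a torsion class directly, checking the $\Hom$-vanishing via the $G$-precovering isomorphism $\Pc^{(1)}$ together with $G$-equivariance of $T(\U)$, and producing the canonical short exact sequence by lifting along $\Pc$ and using exactness; this is in effect just unfolding the $n=1$ case of the argument in the proof of Theorem~\ref{thm:cov-main}, so the two routes are equivalent. The density worry you flag is genuine but is also implicit in the paper's step: applying Theorem~\ref{thm:cov-main} at $n=1$ requires $\Pc(\mmod\C)$ to be a functorially finite ($1$-cluster-tilting, hence all of) $\mmod(\C/G)$, i.e.\ $\Pc$ dense, so you are not introducing a new hypothesis beyond what the paper already relies on. Your suggested fallback via closure under quotients and extensions with the $\Ext^1$-precovering isomorphism is a reasonable alternative should one want to avoid an explicit object-lifting step, though it ultimately leans on the same structural facts.
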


\begin{proof}
By definition,  $T(\U)=\Filt(\Fac(\U))$ is the minimal torsion class of $\mmod\C$ that contains $\U$. Let $M \in T(\U)$. So there exists a filtration
\[0=M_0 \subset M_1 \subset \dots \subset M_t=M\]
of $M$ such that $M_i/M_{i-1} \in \Fac(\U)$ for all $1 \leq i \leq t$. Since the push-down functor $\Pc$ is exact, we easily deduce that $\Pc(M) \in \Filt(\Fac(\Pc(\U)))=T(\Pc(\U))$. Therefore $\Pc(T(\U)) \subseteq T(\Pc(\U))$.
For the reverse inclusion, note that the inclusion $\U \subseteq T(\U)$ implies that $\Pc(\U) \subseteq \Pc(T(\U))$. Hence
\[T(\Pc(\U)) \subseteq T(\Pc(T(\U))).\]
But $ T(\Pc(T(\U)))=\Pc(T(\U))$, because by Theorem~\ref{thm:cov-main} we have that the functor $\Pc$ preserves torsion classes.
The proof is hence complete.
\end{proof}

Note that Theorem~\ref{thm:cov-main} implies that the functor $\Pc: \mmod\C \rt \mmod\C/G$ induces a map
$$n\Pc: \text{$G$-$n$-tors}(\M) \rightarrow n\text{-tors}(\Pc(\M))$$
from the set $\text{$G$-$n$-tors}(\M)$ of $G$-equivariant $n$-torsion classes of $\M \subset \mmod \C$ to the set  $n\text{-tors}(\Pc(\M))$ of $n$-torsion classes of $\Pc(\M) \subset \mmod\C/G$.
Likewise, $\Pc: \mmod\C \rt \mmod\C/G$ induces a map
$$\Pc: \text{$G$-tors}(\mmod\C) \rightarrow \text{tors}(\mmod\C/G)$$
from the set $\text{$G$-tors}(\M)$ of $G$-equivariant torsion classes of $\mmod \C$ to the set  $\text{tors}(\Pc(\M))$ of torsion classes of $\mmod\C/G$.
Using this notation, Corollary~\ref{cor:commutPT} can be restated as follows.

\[
\begin{tikzcd}
\text{$G$-$n$-tors}(\M)\ar{rr}{T(-)} \ar{dd}{n\Pc(-)}	& & \text{$G$-tors}(\mmod\C) \ar{dd}{\Pc(-)}\\
	& & \\
n\text{-tors}(\Pc(\M))\ar{rr}{T(-)}	& & \text{tors}(\mmod\C/G)\\
\end{tikzcd}
\]

\smallskip
Suppose that $\delta = \{\U_s : s\in [0,1]\}$ is a chain of $G$-equivariant $n$-torsion classes in $\M\subset \mmod\C$.
Then Theorem~\ref{thm:cov-main} implies that $\Pc(\delta):=\{\Pc(\U_s) : s\in [0,1]\}$ is a chain of $n$-torsion classes in $\Pc(\M) \subset \mmod\C/G$.

Now, Theorem~\ref{thm:nHNfilt} implies that for every non-zero object $M\in \M$, the chain of $G$-equivariant $n$-torsion classes $\delta$ induces an $n$-Harder-Narasimhan filtration, while $\Pc(\delta)$ induces an $n$-Harder-Narasimhan filtration of $\Pc(M)$.
In the following result we compare both filtrations.

\begin{proposition}\label{prop:pushdownHN}
Let  $\C$ be a locally bounded Krull-Schmidt $\k$-category with an admissible action of a group $G$ on $\C$ inducing an admissible action on $\mmod \C$.
Let $\M$ be a $G$-equivariant $n$-cluster-tilting subcategory of $\mmod\C$ such that $\Pc(\M)$ is functorially finite in $\mmod\C/G$.
Let $\delta=\{\U_s : s\in [0,1]\}$ be a chain of $G$-equivariant $n$-torsion classes in $\M$ and $M$ be a non-zero object of $\M$.
Then a filtration
$$0 =M_0 \subsetneq M_1 \subsetneq \dots \subsetneq M_{r-1} \subsetneq M_r = M$$
is the $n$-Harder-Narasimhan filtration of $M$ with respect to $\delta$ in $\M$ if and only if
$$0 =\Pc(M_0) \subsetneq \Pc(M_1) \subsetneq \dots \subsetneq \Pc(M_{r-1}) \subsetneq \Pc(M_r )= \Pc(M)$$
is the $n$-Harder-Narasimhan filtration of $\Pc(M)$ with respect to the chain of $n$-torsion classes $\Pc(\delta)$ in $\Pc(\M)$.
\end{proposition}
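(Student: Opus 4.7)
The plan is to show that the push-down functor $\Pc$ intertwines each step of the recursive construction of $n$-Harder-Narasimhan filtrations from Theorem~\ref{thm:nHNfilt}, and then to conclude by the uniqueness part of that theorem. The first key ingredient is a commutation lemma: for any $G$-equivariant $n$-torsion class $\U$ of $\M$ and any $M\in\M$, the image $\Pc(U^M)$ is the $n$-torsion subobject of $\Pc(M)$ with respect to $\Pc(\U)$. This is essentially already contained in the proof of Theorem~\ref{thm:cov-main}: applying the exact functor $\Pc$ to the canonical $n$-exact sequence of $M$ with respect to $\U$ produces an $n$-exact sequence in $\Pc(\M)$, and the required $\Pc(\U)$-exactness of the tail is exactly what the precovering isomorphism gives, using the $G$-equivariance of $\U$. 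The statement extends to the unions appearing in the HN construction, since $\Pc(\bigcup_{t>s}\U_t)=\bigcup_{t>s}\Pc(\U_t)$ set-theoretically and such a union is $G$-equivariant, hence an $n$-torsion class in $\Pc(\M)$ by Proposition~\ref{prop:limits} combined with Theorem~\ref{thm:cov-main}.

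Next I would check that the slopes are preserved. For a filtration $(M_i)$ of $M$, the associated slopes are $s_i=\sup\{s:M_i\in\U_s\}$, and one needs $s_i=\sup\{s:\Pc(M_i)\in\Pc(\U_s)\}$. The inequality $\leq$ is immediate. For the reverse, if $\Pc(M_i)\cong\Pc(U)$ with $U\in\U_s$, decompose into indecomposables via Krull-Schmidt in $\Pc(\M)$ and invoke the standard Galois property --- if $\Pc(X)\cong\Pc(Y)$ for indecomposables $X,Y$ of $\M$, then $X\cong{}^g Y$ for some $g\in G$, which is where admissibility of the action enters. The $G$-equivariance of $\U_s$ then forces every indecomposable summand of $M_i$ into $\U_s$, so $M_i\in\U_s$. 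Combined with the commutation lemma, the forward direction of the proposition follows: if $(M_i)$ is the $n$-HN filtration of $M$, then $(\Pc(M_i))$ satisfies all the defining properties of the $n$-HN filtration of $\Pc(M)$ with the same slopes, hence is that filtration by the uniqueness statement in Theorem~\ref{thm:nHNfilt}.

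For the reverse direction, let $(N_i)$ denote the genuine $n$-HN filtration of $M$; by the forward direction $(\Pc(N_i))$ is the $n$-HN filtration of $\Pc(M)$, and so coincides with $(\Pc(M_i))$ by uniqueness. The main obstacle is lifting this coincidence from $\Pc(\M)$ back to $\M$, which I would handle by a downward induction starting from $M_r=N_r=M$: assuming $M_i\cong N_i$ as subobjects of $M$, both $M_{i-1}$ and $N_{i-1}$ are then subobjects of $M_i$ whose images under $\Pc$ coincide with the $n$-torsion subobject of $\Pc(M_i)$ with respect to $\bigcup_{t>s_i}\Pc(\U_t)$, and applying the Krull-Schmidt-Galois argument of the slope step to indecomposable summands of the subobjects themselves identifies $M_{i-1}\cong N_{i-1}$ as subobjects of $M_i$, completing the induction.
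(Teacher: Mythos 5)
Your proposal follows the same overall strategy as the paper's proof: push the recursive step-by-step construction of the $n$-HN filtration through $\Pc$ and invoke uniqueness. In fact you are more thorough than the paper, which only writes out the forward direction (applying $\Pc$ to the canonical $n$-exact sequences, using the set-theoretic commutation of $\Pc$ with unions and intersections, and then appealing to uniqueness) and silently elides both the slope-preservation issue and the lifting needed for the ``if'' direction. You correctly identify both of these as nontrivial points that need an argument.

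However, the tool you reach for at both of these points --- the Galois orbit identification of indecomposables (``$\Pc(X)\cong\Pc(Y)$ implies $X\cong{}^gY$'') --- is heavier than what is needed, is not verified in the generalized Asashiba setting the paper works in, and does not on its own close the reverse direction: showing that the indecomposable summands of $M_{i-1}$ and $N_{i-1}$ lie in the same $G$-orbits only identifies them as abstract objects, not as subobjects of $M_i$, which is what the ``if'' direction actually demands. There is a cleaner route that avoids the Galois lemma entirely. For slope preservation, use the commutation lemma directly: if $\Pc(X)\in\Pc(\U)$ then the $n$-torsion subobject of $\Pc(X)$ with respect to $\Pc(\U)$ is all of $\Pc(X)$; but by the commutation lemma it is $\Pc(U^X)$ with $U^X\hookrightarrow X$ the $n$-torsion subobject, so $\Pc(X/U^X)=0$, and since $\Pc$ is exact and faithful (it is a $G$-precovering, so $\Pc^{(1)}$ is an isomorphism and in particular $\Pc$ is faithful), $X=U^X\in\U$. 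For the reverse direction induction step, this membership reflection shows $M_{i-1}\in\bigcup_{t>s_i}\U_t$, hence $M_{i-1}\subseteq N_{i-1}$ by maximality of the $n$-torsion subobject; then $\Pc(N_{i-1}/M_{i-1})=0$ gives $M_{i-1}=N_{i-1}$ as subobjects, again using faithfulness. This replaces both of your Krull-Schmidt-Galois invocations with arguments that use only exactness, faithfulness, the commutation lemma, and the universal property of $n$-torsion subobjects.
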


\begin{proof}
Clearly, the union and intersection of $G$-equivariant sets is itself $G$-equivariant.
This fact together with Proposition~\ref{prop:limits} imply that $\bigcup_{r> s} \U_r$ and $\bigcap_{t< s} \U_t$ are $G$-equivariant $n$-torsion classes for every $s \in [0,1]$.
Hence, it follows from Theorem~\ref{thm:cov-main} that $\Pc(\bigcup_{r> s} \U_r)$ and $\Pc(\bigcap_{t<s} \U_t)$ are $n$-torsion classes in $\Pc(\M)$.

We claim that $\Pc(\bigcup_{r> s} \U_r) = \bigcup_{r> s} \Pc(\U_r)$ and  $\Pc(\bigcap_{t<s} \U_t)= \bigcap_{t<s} \Pc(\U_t)$.
We only show the first of these equalities, the proof of the second being similar.

Clearly, $\U_r \subset \bigcup_{r> s} \U_r$ for all $s< r\leq 1$.
Then $\Pc(\U_r) \subset \Pc(\bigcup_{r> s} \U_r)$ for all $s< r\leq 1$.
Hence $\bigcup_{r>s}\Pc(\U_r) \subset \Pc(\bigcup_{r> s} \U_r)$.
For the reverse inclusion, let $X \in \Pc(\bigcup_{r> s} \U_r)$.
Then $X=\Pc(Y)$ for some $Y \in \bigcup_{r> s} \U_r$.
This implies the existence of a $r\in (s, 1]$ such that $Y \in \U_r$.
Thus $X=\Pc(Y)\in \Pc(\U_r) \subset \bigcup_{r>s}\U_r$ and our claim follows.

Let $M$ be a non-zero object of $\M$ and let
$$0 =M_0 \subsetneq M_1 \subsetneq \dots \subsetneq M_{r-1} \subsetneq M_r = M$$
be the $n$-Harder-Narasimhan filtration of $M$ with respect to $\delta$ in $\M$, it follows from Definition~\ref{def:n-slicing} that $M_{r-1}$ is the $n$-torsion object of $M$ with respect to the $n$-torsion class $\bigcup_{r>s_{r}}\U_r$, where $s_r= \sup\{t\in [0,1] : M\not\in \U_t \}$.
Then, Theorem~\ref{thm:cov-main} implies that $\Pc(M_{r-1})$ is the torsion object of $\Pc(M)$ with respect to the $n$-torsion class $\Pc(\bigcup_{r> s} \U_r) = \bigcup_{r> s} \Pc(\U_r)$.

Repeating this argument inductively, we obtain an $n$-Harder-Narasimhan filtration
$$0 =\Pc(M_0) \subsetneq \Pc(M_1) \subsetneq \dots \subsetneq \Pc(M_{r-1}) \subsetneq \Pc(M_r )= \Pc(M)$$
of $M$ with respect to the chain of torsion classes $\Pc(\delta)$.
Since the $n$-Harder-Narasimhan filtration is unique up to isomorphism by Theorem~\ref{thm:nHNfilt}, the proof is finished.
\end{proof}

To finish the paper we illustrate the results of this section.

\begin{example}
Let $B$ be the be the path algebra of the quiver
\begin{center}
\begin{tikzpicture}
\foreach \a in {1,2,...,8}{
\draw ( 360 - \a*360/8 + 135 : 60pt) node{\a};
}
\foreach \a in {1, 2, ..., 8}{
\draw[<-] (\a*360/8 + 50 : 60pt) arc (\a*360/8 + 50 : (\a+2)*360/8 - 5 :60pt); }
\end{tikzpicture}
\end{center}
modulo its radical squared.
The module category $\mmod B$ of $B$ has a $2$-cluster tilting subcategory
$$\M = \add \left\{ \Rep{1} \oplus \Rep{3} \oplus \Rep{5} \oplus \Rep{7} \oplus \Rep{8\\1} \oplus \Rep{1\\2} \oplus \Rep{2\\3} \oplus \Rep{3\\4} \oplus \Rep{4\\5} \oplus \Rep{5\\6} \oplus \Rep{6\\7} \oplus \Rep{7\\8} \right\}.$$
The Auslander-Reiten quiver of $B$ can be seen in Figure~\ref{fig:Ar-quiverB}.
In the figure the indecomposable objects that belong to $\M$ are indicated in red and the dotted arrows correspond to the Auslander-Reiten translation in $\mmod B$.

\begin{figure}[ht]
\centering
\begin{tikzpicture}[scale = 0.88]
\clip(-0.3, -0.3) rectangle (16.3, 1.8);

\foreach \a in {1, 3, 5, 7}{
\draw[color = red] (16 - 2*\a , 0) node{$\Rep{\a}$};}

\foreach \a in {2, 4, 6, 8}{
\draw (16 - 2*\a , 0) node{$\Rep{\a}$};}
\draw (16, 0) node{$\Rep{8}$};

\foreach \a in {1, 2, 3, ..., 7}{
\pgfmathsetmacro\result{\a + 1}
\draw[color = red] (15 - 2*\a , 1) node{$\Rep{\a \\ \pgfmathprintnumber{\result} }$};}
\draw[color = red] (15, 1) node{$\Rep{8\\1}$};

\foreach \a in {1, 2, ..., 8}{
\draw [->] (2*\a - 1.8 , 0.2) -- (2*\a - 1.2 , 0.8);}

\foreach \a in {1, 2, ..., 8}{
\draw [->] (2*\a - 0.8 , 0.8) -- (2*\a - 0.2 , 0.2);}

\foreach \a in {1, 2, ..., 8}{
\draw [<-, dashed] (2*\a - 1.8 , 0) -- (2*\a - 0.2 , 0);}

\end{tikzpicture}
\caption{The Auslander-Reiten quiver of $B$}
    \label{fig:Ar-quiverB}
\end{figure}
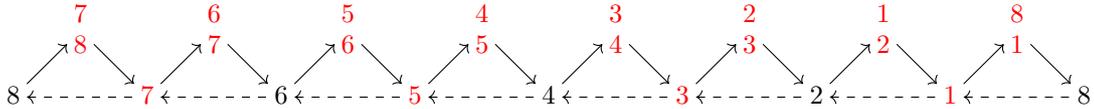

We note that the algebra $B$ can be seen as a {$\k$-linear} category $\C_B$ having exactly eight objects $\{e_1, \dots, e_8\}$ which are pairwise non-isomorphic such that $\Hom_{\C_B} (e_i, e_j)$   is non-empty if and only if $e_i B e_j$ is non-zero. Note that the identity morphism of the object $e_i$ corresponds to the element $e_i$  in $e_i B e_i$ and, moreover, $\Hom(e_i, e_j)$ is one-dimensional  corresponding to the one-dimensional vector space $e_j B e_i$ if there is an arrow from $j$ to $i$ or if $i = j$ and $\Hom(e_i, e_j)$ is the zero vector space otherwise.
It follows from the definitions that $\mmod \C_B$ is equivalent to $\mmod B$.

We also note that there is an admissible $\mathbb{Z}_2$ action $g$ over $\C_B$ which in the objects is defined as $g(e_i) = e_{i+4 \mmod 8}$.
In this case we have that $\C_B / \mathbb{Z}_2$ is a category having exactly four pairwise non-isomorphic objects and $\mmod \C_B / \mathbb{Z}_2$ is equivalent to the module category $\mmod C$, where $C$ is the path algebra of the following quiver
\begin{center}
\begin{tikzpicture}
\foreach \a in {1,2, 3,4}{
\draw ( 495 - \a*360/4  : 60pt) node{\a};
}
\foreach \a in {1,2,3,4}{
\draw[->] (\a*360/4 + 40 : 60pt) arc (\a*360/4 + 40 : (\a)*360/4 - 40 : 60pt); }
\end{tikzpicture}
\end{center}
module the radical squared.
In $\mmod C$ there is a $2$-cluster tilting subcategory $\M'$.
The Auslander-Reiten quiver of $\mmod C$ can be seen in Figure~\ref{fig:Ar-quiverC}.
In red we highlight the indecomposable objects of $\mmod C$ that are in $\M'$.

\begin{figure}[ht]
\centering
\begin{tikzpicture}[scale = 0.88]
\clip(-0.3, -0.3) rectangle (8.3, 1.8);

\foreach \a in {1, 3}{
\draw[color = red] (8 - 2*\a , 0) node{$\Rep{\a}$};}

\foreach \a in {2, 4}{
\draw (8 - 2*\a , 0) node{$\Rep{\a}$};}
\draw (8, 0) node{$\Rep{4}$};

\foreach \a in {1, 2, 3}{
\pgfmathsetmacro\result{\a + 1}
\draw[color = red] (7 - 2*\a , 1) node{$\Rep{\a \\ \pgfmathprintnumber{\result} }$};}
\draw[color = red] (7, 1) node{$\Rep{4\\1}$};

\foreach \a in {1, 2, ..., 4}{
\draw [->] (2*\a - 1.8 , 0.2) -- (2*\a - 1.2 , 0.8);}

\foreach \a in {1, 2, ..., 4}{
\draw [->] (2*\a - 0.8 , 0.8) -- (2*\a - 0.2 , 0.2);}

\foreach \a in {1, 2, ..., 4}{
\draw [<-, dashed] (2*\a - 1.8 , 0) -- (2*\a - 0.2 , 0);}

\end{tikzpicture}
\caption{The Auslander-Reiten quiver of $C$}
    \label{fig:Ar-quiverC}
\end{figure}

As mentioned in the introduction, there is a natural push-down functor $\Pc : \mmod B \rt \mmod C$. Moreover, it follows from the results of \cite{DarpoIyama} that $\Pc(\M)=\M'$. Now, we know from Theorem~\ref{thm:cov-main} that for any $\mathbb{Z}_2$-equivariant $2$-torsion class $\U$ of $\M$, $\Pc(\U)$ is a $2$-torsion class in $\M'$. In the following table, we give a complete list of all $\mathbb{Z}_2$-equivariant $2$-torsion classes of $\M$ and their respective image under $\Pc : \mmod C \rt \mmod B$, where we denote the set of $\mathbb{Z}_2$-equivariant $2$-torsion classes of $\M$ by $\mathbb{Z}_2$-$2$-tors($\M$) and the set of $2$-torsion classes of $\M'$ by  $2$-tors($\M'$).

\begin{center}
%\begin{table}[b]
\begin{tabular}{|| c  | c ||}
\hline
$\U\in \mathbb{Z}_2$-$2$-tors($\M$) & $\Pc(\U) \in$ $2$-tors($\M'$)  \\
\hline
$\M$	& $\M'$  \\
\hline
$\add \left\{ \Rep{6\\7} \oplus \Rep{5\\6} \oplus \Rep{5} \oplus \Rep{2\\3} \oplus \Rep{1\\2} \oplus \Rep{1} \right\}$	&
$\add \left\{ \Rep{2\\3} \oplus \Rep{1\\2} \oplus \Rep{1} \right\}$
\\
\hline
$\add \left\{ \Rep{8\\1} \oplus \Rep{7\\8} \oplus \Rep{7} \oplus \Rep{4\\5} \oplus \Rep{3\\4} \oplus \Rep{3} \right\}$	&
$\add \left\{ \Rep{4\\1} \oplus \Rep{3\\4} \oplus \Rep{3} \right\}$
\\
\hline
$\add \left\{ \Rep{5\\6} \oplus \Rep{5} \oplus \Rep{1\\2} \oplus \Rep{1} \right\}$	&
$\add \left\{ \Rep{1\\2} \oplus \Rep{1} \right\}$
\\
\hline
$\add \left\{ \Rep{7\\8} \oplus \Rep{7} \oplus \Rep{3\\4} \oplus \Rep{3} \right\}$	&
$\add \left\{ \Rep{3\\4} \oplus \Rep{3} \right\}$
\\
\hline
$\add \left\{  \Rep{5} \oplus \Rep{1} \right\}$	&
$\add \left\{ \Rep{1} \right\}$
\\
\hline
$\add \left\{ \Rep{7} \oplus \Rep{3} \right\}$	&
$\add \left\{ \Rep{3} \right\}$
\\
\hline
$\add \left\{ 0 \right\}$	&
$\add \left\{ 0 \right\}$ \\
\hline
\end{tabular}
%\caption{The $\mathbb{Z}_2$-equivariant torsion classes in $\mmod B$ and their image under $\Pc$}\label{table:list}
%\end{table}
\end{center}

Now consider the chain $\delta$ of $\mathbb{Z}_2$-equivariant $2$-torsion classes of $\M$ defined as follows.
$$\delta=
\begin{cases}
\U_t = \M  & \text{ if $t \in [0, 1/3)$}\\
\U_t = \add \left\{ \Rep{6\\7} \oplus \Rep{5\\6} \oplus \Rep{5} \oplus \Rep{2\\3} \oplus \Rep{1\\2} \oplus \Rep{1} \right\}  & \text{ if $t \in [1/3, 2/3)$}\\
\U_t = \add\{0\}  & \text{ if $t \in [2/3, 1]$}
\end{cases}
$$
An easy calculation shows that $\Pc(\delta)$ is the following chain of $2$-torsion classes in $\M'$.
$$\Pc(\delta)=
\begin{cases}
\Pc(\U_t) = \M  & \text{ if $t \in [0, 1/3)$}\\
\Pc(\U_t) = \add \left\{ \Rep{2\\3} \oplus \Rep{1\\2} \oplus \Rep{1} \right\}  & \text{ if $t \in [1/3, 2/3)$}\\
\Pc(\U_t) = \add\{0\}  & \text{ if $t \in [2/3, 1]$}
\end{cases}
$$

If we take the object $\rep{8\\1}\oplus \rep{4\\5}$, one can see that its $2$-Harder-Narasimhan filtration with respect to $\delta$ is $0 \subset \rep{1}\oplus \rep{5} \subset \rep{8\\1}\oplus \rep{4\\5}$.
Moreover, we have that $\Pc\left(\rep{8\\1}\oplus \rep{4\\5}\right) = \rep{4\\1} \in \M'$.
Calculating the $2$-Harder-Narasimhan filtration of $\rep{4\\1}$ with respect to $\Pc(\delta)$ we obtain $0 \subset \rep{1} \subset \rep{4\\1}$, where we can see that $\rep{1} = \Pc(\rep{1}\oplus \rep{5})$ as shown in Proposition~\ref{prop:pushdownHN}.
\end{example}

\def\cprime{$'$}

\makeaddress
\end{document}